\documentclass[11pt,a4paper]{article}

\usepackage[T1]{fontenc}
\usepackage{amsmath,amssymb,amsthm,mathtools}
\usepackage[margin=1in]{geometry}
\usepackage{enumitem}
\usepackage{booktabs,tabularx,array}
\usepackage[hidelinks]{hyperref}
\hypersetup{
  pdftitle={Optimising Selberg's method for critical zeros},
  pdfauthor={Andrew Pearce-Crump}
}
\usepackage{graphicx}
\usepackage{tikz}
\usetikzlibrary{arrows.meta}
\definecolor{cSel}{RGB}{33,102,172}
\definecolor{cLev}{RGB}{35,139,69}
\definecolor{cAtk}{RGB}{204,111,0}
\definecolor{cPair}{RGB}{136,65,157}
\usepackage{microtype}

\allowdisplaybreaks
\setlist{nosep}

\theoremstyle{plain}
\newtheorem{theorem}{Theorem}[section]
\newtheorem{proposition}[theorem]{Proposition}
\newtheorem{lemma}[theorem]{Lemma}
\newtheorem{corollary}[theorem]{Corollary}
\theoremstyle{remark}
\newtheorem{remark}[theorem]{Remark}

\newcommand{\Real}{\operatorname{Re}}
\newcommand{\Imag}{\operatorname{Im}}
\newcommand{\half}{\tfrac12}
\newcommand{\Cres}{\mathcal C_{\mathrm{res}}}
\newcommand{\Sdiag}{S_{\!\lambda}(U)}
\newcommand{\dd}{\,d}
\newcommand{\R}{\mathbb R}
\newcommand{\one}{\mathbf 1}
\newcommand{\kappazero}{\kappa_0}

\makeatletter
\renewcommand{\author}[2][]{\gdef\@author{#2}}
\newcommand{\address}[1]{\gdef\@address{#1}}
\newcommand{\email}[1]{\gdef\@email{#1}}
\let\@address\@empty
\let\@email\@empty
\newcommand{\printauthoraddress}{%
  \par\bigskip\noindent\footnotesize
  \ifx\@address\@empty\else\textsc{\@address}\par\fi
  \ifx\@email\@empty\else\smallskip\noindent\textit{Email address:}\ \texttt{\@email}\par\fi}
\makeatother

\title{Optimising Selberg's method for critical zeros}
\author[A. Pearce-Crump]{Andrew Pearce-Crump}
\address{School of Mathematics, University of Bristol, Bristol, BS8 1UG, United Kingdom}
\email{andrew.pearce-crump@bristol.ac.uk}
\date{}

\begin{document}
\maketitle

\begin{abstract}
We revisit Zhuravlev's 1974 quantitative form of Selberg's sign-change method
for the zeros of the Riemann zeta function on the critical line.  Zhuravlev's
work, originally in Russian and little known in the West, established an
explicit positive proportion of such zeros.  Using modern techniques we
optimise the arithmetic mean value at the heart of the method, which has an
exact closed form for reciprocal-square-root coefficients and is improved
further by a positive-semidefinite family of mollifiers.  We thereby obtain the
strongest result yet known from Selberg's method, proving that at least $7\%$ of
the non-trivial zeros of the Riemann zeta function lie on the critical line.
\end{abstract}

\section{Introduction}
\label{sec:intro}

Let \(N(T)\) denote the number of non-trivial zeros
\(\rho=\beta+i\gamma\) of the Riemann zeta function \(\zeta(s)\) with
\(0<\gamma\le T\), counted with multiplicity, and let \(N_0(T)\) count those
with \(\beta=\tfrac12\).  We write
\[
 \kappa_0=\liminf_{T\to\infty}\frac{N_0(T)}{N(T)}.
\]
Hardy proved that infinitely many zeros of the zeta function lie on the
critical line, and Hardy--Littlewood obtained \(N_0(T)\gg T\)
\cite{Hardy,HardyLittlewood}.  Selberg's theorem \(N_0(T)\gg T\log T\) was the
first positive-proportion result \cite{Selberg,Titchmarsh}: his method detects
sign changes of Hardy's real function after multiplication by a non-negative
mollifier.  Levinson's argument-principle method and Atkinson's two-point sign
test are analytically different; Section~\ref{sec:history} compares the three
approaches, as well as the recent pair correlation method, and places
Zhuravlev's work within the Selberg tradition.

This paper has two aims.  First, we give a corrected and self-contained account
of Zhuravlev's 1974 paper \cite{Zhuravlev}, which is written in Russian and
little known in the West.  Second, we optimise the arithmetic at the heart of
his and Selberg's method.  Our main result is the following.

\begin{theorem}\label{thm:main}
At least \(7\%\) of the non-trivial zeros of \(\zeta(s)\) lie on the critical
line; more precisely, \(\kappa_0>0.0700162\).
\end{theorem}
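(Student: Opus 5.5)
The plan is to run Selberg's sign-change argument in Zhuravlev's quantitative form and then optimise the mollifier. Let \(Z(t)\) be Hardy's real function. Fix a mollifier \(\psi(s)=\sum_{\nu\le X}\lambda_\nu\nu^{-s}\), with \(X=T^{\theta}\), \(\lambda_1=1\), and \(\lambda_\nu\) supported on squarefree \(\nu\). Consider short intervals \((t,t+H)\) with \(H=c/\log T\). For each such interval compare
\[
 I(t)=\Bigl|\int_t^{t+H} Z(u)\,|\psi(\tfrac12+iu)|^2\,du\Bigr|
 \quad\text{and}\quad
 J(t)=\int_t^{t+H}|Z(u)|\,|\psi(\tfrac12+iu)|^2\,du .
\]
If \(I(t)<J(t)\), then \(Z\) changes sign in \((t,t+H)\).

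Let \(E\) be the set of \(t\in[T,2T]\) with \(I(t)<J(t)\). A discretisation argument in the style of Zhuravlev, counting disjoint translates, gives
\[
 N_0(2T)-N_0(T)\ \ge\ \frac{\operatorname{meas}(E)}{H}\,(1+o(1)).
\]
To bound \(\operatorname{meas}(E)\) from below we need two mean values. The first is an upper bound for \(\int_T^{2T} I(t)^2\,dt\), obtained from the twisted second moment of \(\zeta\) against \(|\psi|^4\) over short intervals. Expanding the square reduces this to an explicit quadratic form in the \(\lambda_\nu\) with a kernel depending on \(c\) and \(\theta\). The second is a lower bound for \(\int_T^{2T}J(t)\,dt\). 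By Cauchy--Schwarz and the first moment of \(Z|\psi|^2\), this is \(\gg HT\times(\text{a linear form in }\lambda)\). Hölder's inequality on the complement of \(E\) then yields
\[
 \frac{\operatorname{meas}(E)}{T}\ \ge\ \frac{\bigl(\int J-\int I\bigr)^2}{T\int J^2}
\]
or a variant of it. The resulting proportion is \(\kappa_0\ge F(c,\theta,\lambda)\), where \(F\) is an explicit ratio of quadratic and linear forms. This is exactly the "arithmetic mean value" referred to in the abstract.

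Next, choose the coefficients. First take Selberg's classical \(\lambda_\nu=\mu(\nu)\nu^{-1/2}\,P(\log(X/\nu)/\log X)\). In the reciprocal-square-root case the sums over \(\nu_1,\nu_2,\nu_3,\nu_4\) can be evaluated asymptotically in closed form, using contour integrals of Euler products \(\prod_p\) that yield powers of \(\log X\). Following the abstract, I would then generalise to a positive-semidefinite family, namely sums of squares of such mollifiers. This keeps \(|\psi|^2\ge0\), which the sign test needs, while enlarging the space of polynomials \(P_j\). The main term becomes a generalised Rayleigh quotient, and we maximise it over polynomials of bounded degree and over the parameters \(c\) and \(\theta\), with \(\theta<\tfrac12\) as admissible by the twisted fourth-moment-type asymptotics. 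The final step is a rigorous numerical evaluation, for instance with interval arithmetic on the explicit integrals, certifying the value \(>0.0700162\).

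I expect the main obstacle to be the fourth-power-in-\(\psi\) mean value \(\int I^2\). It involves \(\zeta\) times four mollifier factors, so the off-diagonal terms must be controlled for \(\theta\) close to \(\tfrac12\). Its main term must also be computed exactly, not merely bounded as in Selberg's original \(\ll\) argument, and Zhuravlev's corrected constants must be tracked carefully. For this I would first use the Hardy--Littlewood approximate functional equation together with the Montgomery--Vaughan mean value theorem, reducing everything to diagonal arithmetic sums. Only then would I evaluate those sums by multiple Mellin integrals.
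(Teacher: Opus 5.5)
You have most of the ingredients the paper uses: a non-negative Selberg mollifier with reciprocal-square-root coefficients and a logarithmic profile, a sum of squared mollifiers that turns the profile choice into a convex positive-semidefinite problem, and an interval-arithmetic certificate. The gap is that the proposal never produces the quantity whose value is $0.0700162$, so your final ``certification'' step asserts the result rather than deriving it.

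In the paper this number is exactly $1/(8e\,C[q_3])$ with $C[q_3]=0.65677\ldots$. It comes from a specific conversion of a mean square into a proportion:
\begin{enumerate}
\item Lavrik's analytic half $\beta_U$, with $\aleph(s)Y_U(s)=2\Real\beta_U(s)$ on the critical line.
\item Littlewood's rectangle lemma on $\delta\le\Real s\le\frac12$. Along the critical edge the argument of $\beta_U$ moves by at most $\pi$ between consecutive sign changes.
\item Concavity of the logarithm on $\Real s=\delta$, with the optimal shift $1-2\delta=ec^2\gamma/\log U$.
\end{enumerate}
This gives $\kappa_0\ge 2\theta/(ec^2\gamma)$ with $c=2$ and $\theta\uparrow\frac14$. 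Here $\gamma$ is the leading constant of the single diagonal form $(\log U)S_C(1,U)$ for the Gram matrix of the three mollifiers. It is evaluated through the four-variable Euler product, its bipartite quarter-order poles, and the polyhedral integral.

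Your comparison of $I(t)$ with $J(t)$ on intervals of length $c/\log T$ is Selberg's original scheme, and it leads to a different functional $F(c,\theta,\lambda)$. That functional is built from $\int J$, $\int I^2$, and also $\int J^2$, which your H\"older step needs but you do not list. It passes through several lossy Cauchy--Schwarz steps, and the explicit constants historically extracted from it are minuscule. Nothing in your sketch shows that its optimum over profiles reaches $0.07$. You would have to compute that functional explicitly and exhibit a certified profile beating $0.0700162$. Alternatively, switch to the rectangle/Jensen reduction and supply its analytic supports:
\begin{itemize}
\item horizontal argument bounds via Jensen on growing disks, which need a non-zero terminal coefficient;
\item hence an $\varepsilon$-regularised profile;
\item and the order of limits $T\to\infty$, then $\varepsilon\downarrow0$, then $\theta\uparrow\frac14$.
\end{itemize}

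Two inputs in your sketch are also wrong.

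First, the coefficients $\mu(\nu)\nu^{-1/2}P(\cdot)$ are not those of $\zeta(s)^{-1/2}$. Since $|\psi|^2$ must mollify $|\zeta|$, you need $b_p\to-\frac12$. By the local computation in Section~\ref{sec:Zh-exact}, any other limiting value $v$ of $-b_p$ makes $S_b(1,U)$ of size $(\log U)^{4v(v-1)}$. That exceeds $(\log U)^{-1}$ by the factor $(\log U)^{4(v-\frac12)^2}$, which destroys the positive proportion.

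Second, $\theta<\frac12$ is not admissible. Then $\psi^2$ has length $T^{2\theta}$, close to $T$, far outside the range of known twisted second-moment asymptotics. The paper's rational-frequency off-diagonal bound needs $U^2\log T=o(T^{1/2})$, i.e.\ $\theta<\frac14$. Since the paper's bound is proportional to $\theta$ and the stated constant is its value at $\theta\uparrow\frac14$, this is not a cosmetic slip.

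Finally, the obstacle you anticipate, a twisted moment of $\zeta$ against $|\psi|^4$, does not arise in the paper's route. After Lavrik's approximate functional equation, $\beta_U$ on $\Real s=\delta$ is one Dirichlet polynomial of length about $\sqrt T$ with rational frequencies. Its off-diagonal terms are controlled by frequency spacing alone, and the four mollifier factors enter only through the arithmetic form $S_b(1,U)$.
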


\begin{remark}
This proportion does not approach the strongest bounds available for
$\zeta(s)$.  The value of the present paper lies elsewhere: it gives a method
for producing a positive proportion of zeros on the critical line for
$L$-functions where the other methods may not be available, and it optimises
the arithmetic on which that method rests.
\end{remark}

This is proved in Section~\ref{sec:psd} as Theorem~\ref{thm:three-square}, by
replacing Selberg's single mollifier \(|M_U|^2\) by a positive-semidefinite
sum of three squared mollifiers and optimising the resulting arithmetic
quadratic form.  The three-square detector is an explicit feasible point of
that optimisation, built from the quarter-power logarithmic profile
\(v^{1/4}\); it is close to, but not, the optimum, and the true optimum lies
only marginally higher.  The stronger six-square profile of
Appendix~\ref{app:psd} already improves the constant only in its fifth decimal
place, so we do not pursue the optimum here.

\subsection{Selberg's method}
\label{sec:intro-method}

Selberg's method rests on an elementary observation.  On the critical line there
is a real-valued function \(Z(t)\), Hardy's function, whose real zeros are
exactly the ordinates of the zeros of \(\zeta(s)\) on the line.  A sign change of
\(Z(t)\) between two ordinates forces a critical zero of odd order between them, so
the number of sign changes on \([T,2T]\) is a lower bound for the number of
critical zeros there.  Counted directly this bound is weak.  Hardy and
Hardy--Littlewood obtained in this way only \(N_0(T)\gg T\)
\cite{Hardy,HardyLittlewood}, a density-zero set.

Selberg's device is to multiply \(Z(t)\) by \(|M(\tfrac12+it)|^2\), where \(M(s)\)
is a Dirichlet polynomial called a mollifier.  Since \(|M(s)|^2\ge0\), the product
keeps the sign changes of \(Z(t)\) away from the zeros of \(M(s)\), which are of even
order and harmless, so it detects the same critical zeros.  The gain is
analytic, since the mollifier can be chosen to flatten the large fluctuations of
\(Z(t)\), sharpening the mean-value inequalities that turn average size into a
count of sign changes.  A second moment of the mollified detector, passed
through Jensen's inequality, then yields a \emph{positive} proportion of sign
changes, so that \(N_0(T)\gg T\log T\).  How large a proportion is governed by
the mean value of the mollified detector, hence by the coefficients of \(M(s)\);
choosing them well is the arithmetic problem at the heart of the method.

Zhuravlev made this quantitative for an arbitrary Dirichlet polynomial.  We
write $U$ for the mollifier length, where Zhuravlev writes $Z$, so that $Z(t)$
is free for Hardy's function throughout.  For
real coefficients \(b_n\), put
\[
 M_U(s)=\sum_{n\le U}b_n n^{-s},
 \qquad
 Y_U(s)=\zeta(s)M_U(s)M_U(1-s).
\]
Here $\aleph(s)=\chi(s)^{-1/2}$ is the unimodular factor defined
in~\eqref{eq:aleph} below, where $\chi(s)$ is the factor in the functional
equation $\zeta(s)=\chi(s)\zeta(1-s)$; it satisfies $|\aleph(\half+it)|=1$ and
is what makes $\aleph(s)Y_U(s)$ real on the critical line.  On the critical
line, \(\aleph(s)Y_U(s)\) is Hardy's function multiplied by
\(|M_U(s)|^2\), and so has the same sign changes as Hardy's function away from
its zeros.  Lavrik's approximate functional equation supplies an analytic half
\(\beta_U\) with
\begin{equation}\label{eq:intro-c2}
 \aleph(s)Y_U(s)=2\operatorname{Re}\beta_U(s)
 \qquad (\operatorname{Re}s=\tfrac12),
\end{equation}
whose mean square is governed by
\begin{equation}\label{eq:intro-Sb}
 S_b(1,U)=\sum_{d\le U^2}\varphi(d)
 \left(\sum_{\substack{\ell,m\le U\\d\mid\ell m}}
 \frac{b_\ell b_m}{\ell m}\right)^2.
\end{equation}
If \(U=T^\theta\), \(0<\theta<\tfrac14\), and
\(S_b(1,U)\le(\gamma+o(1))/\log U\), then under the normalisation
\(\aleph(s)Y_U(s)=c\operatorname{Re}\beta_U(s)\),
\begin{equation}\label{eq:intro-master}
 \kappa_0\ge \frac{2\theta}{e c^2\gamma}.
\end{equation}
Proposition~\ref{prop:distilled} proves this, and Section~\ref{sec:Zsource}
verifies its hypotheses uniformly in the coefficients used below.

Zhuravlev's coefficient choice gives \(\gamma_{\mathrm{Zh}}=3\pi^2/16\).  The
source states \eqref{eq:intro-c2} but its later logarithmic inequality is
written as if the factor \(2\) were absent, and its polar term is printed with
denominator \(1-s\) in place of \(s-1\).  Correcting both points and carrying
the missing term \(-T\log2\) through the optimisation gives the benchmark
\begin{equation}\label{eq:intro-Zh-corrected}
 \kappa_0\ge \frac{1}{8e\gamma_{\mathrm{Zh}}}
 =\frac{2}{3\pi^2e}
 =0.0248493202\ldots,
\end{equation}
rather than the printed proportion \(2/21\).

\subsection{Summary of the paper}
\label{sec:intro-summary}

We optimise the arithmetic form \eqref{eq:intro-Sb}.  Writing
\(\zeta(s)^{-1/2}=\sum_{n\ge1}\lambda(n)n^{-s}\), the sharp coefficients
\(b_n=\lambda(n)\) give
\begin{equation}\label{eq:intro-sharp}
 S_\lambda(U)=\frac{\mathcal C_{\mathrm{res}}}{\log U}
 +o\!\left(\frac1{\log U}\right),
 \qquad
 \mathcal C_{\mathrm{res}}
 =\frac{\Gamma(\tfrac14)^4}{2\pi^4}
 =\frac{2}{\Gamma(\tfrac34)^4},
\end{equation}
and hence
\begin{equation}\label{eq:intro-sharp-kappa}
 \kappa_0\ge\frac{\Gamma(\tfrac34)^4}{16e}=0.0518466520\ldots.
\end{equation}
For the
logarithmic power profiles \(b_{n,a}(U)=\lambda(n)(\log(U/n)/\log U)^a\) with
\(a>0\) (the name is ours; there seems to be no standard one), an explicit non-negative polyhedral constant \(\mathcal C_a\) arises;
at \(a=\tfrac14\), rigorous interval computation gives
\(0.6779054<\mathcal C_{1/4}<0.6786576\), and hence \(\kappa_0>0.0677586\).
Finally, allowing a positive-semidefinite sum of squared mollifiers turns the
profile problem into a convex semidefinite programme, and the explicit
three-square profile of Section~\ref{sec:psd} gives
Theorem~\ref{thm:main}.  Thus the sharp, quarter-power and three-square
detectors improve the corrected Zhuravlev benchmark by factors of about
\(2.09\), \(2.73\) and \(2.82\).

These factors are measured against Zhuravlev's benchmark, which rests on
Wirsing's mean-value theorem; his coefficients, evaluated exactly rather than
bounded, already give $0.8052\ldots$ (Section~\ref{sec:Zh-exact}).  Most of the
improvement therefore lies in evaluating the diagonal rather than in the choice
of coefficients, and what this paper adds to the arithmetic is the profile: the
multiplicative structure of $\lambda$ is already optimal, by the local
inequality at the end of Section~\ref{sec:psd}.

The proof separates into analytic and arithmetic parts.  The analytic
reconstruction uses the sign-preserving Selberg detector, Lavrik's approximate
functional equation, a rectangle inequality, and a coefficient-uniform
rational-frequency off-diagonal estimate.  The arithmetic analysis begins with
the convolution identity \(\lambda*\lambda=\mu\), removes the same-side poles
from a four-variable Euler product, and evaluates the resulting bipartite
quarter-order residue exactly; the positive-semidefinite optimisation is then a
finite convex programme certified in Appendix~\ref{app:psd}.

Section~\ref{sec:history} gives the historical comparison.
Sections~\ref{sec:sourceguide}--\ref{sec:Zsource} reconstruct and correct
Zhuravlev's analytic argument.  Sections~\ref{sec:sharp}--\ref{sec:transfer}
evaluate the sharp diagonal, Section~\ref{sec:weights} treats the power
profiles, Section~\ref{sec:psd} carries out the positive-semidefinite
optimisation, with the resulting critical-line proportions at its end.
Appendix~\ref{app:Lavrik-horizontal} supplies the analytic
large-disk bounds and Appendix~\ref{app:psd} the finite-dimensional
certificates.

\section{Historical context: Hardy, Selberg, Levinson, Atkinson and pair \mbox{correlation}}
\label{sec:history}

Before giving the full details of the argument, we explore the various
methods that have been used to produce values of $\kappa_0$, and where the
present method sits among them.

\subsection{Hardy's function and sign changes}

Write the functional equation of $\zeta(s)$ as
\[
 \zeta(s)=\chi(s)\zeta(1-s),
 \qquad
 \chi(s)=\pi^{s-1/2}
 \frac{\Gamma((1-s)/2)}{\Gamma(s/2)},
\]
and choose a continuous square root on the critical line.  Hardy's function
\[
 Z(t)=\chi(\tfrac12+it)^{-1/2}\zeta(\tfrac12+it)
\]
is real for real \(t\), and its real zeros are precisely the ordinates of
zeros of \(\zeta(s)\) on \(\operatorname{Re}s=\tfrac12\).  Every sign change
therefore detects a zero of odd multiplicity.  Hardy's theorem and the
Hardy--Littlewood lower bound are based on this principle, supplemented by
mean-value estimates for \(Z(t)\) \cite{Hardy,HardyLittlewood,Titchmarsh}.
It is worth recalling how the principle is applied, since the same shape
of argument recurs throughout.  Hardy's original argument compares two
averages of $Z(t)$ over $[T,2T]$.  The mean $\int_T^{2T}Z(t)\dd t$ is small,
because $Z(t)$ oscillates, while the mean of $|Z(t)|$, bounded below through
$\int_T^{2T}Z(t)^2\dd t\asymp T\log T$ and Cauchy--Schwarz, is not.  If
$Z(t)$ had constant sign on a long subinterval the two averages would agree
there in size, so $Z(t)$ must change sign, and a positive number of times.
Hardy--Littlewood turned this into the quantitative count $N_0(T)\gg T$ by
running the comparison on each of $\asymp T$ short intervals.  The whole
difficulty is that the second moment, which supplies the lower bound, is
dominated by the rare large values of $Z(t)$ rather than by its typical size,
so the comparison is lossy; Selberg's mollifier is precisely a device for
flattening those large values before the comparison is made.
A sign-change count is at most \(N_0(T)\), but it has the additional feature
that it counts only zeros of odd multiplicity.

\subsection{Selberg's sign-preserving mollifier}

Selberg multiplied the real detector by a non-negative mollifier.  If
\(M(s)\) is a Dirichlet polynomial, then on the critical line
\[
 \chi(s)^{-1/2}\zeta(s)M(s)M(1-s)
 =Z(t)|M(\tfrac12+it)|^2.
\]
Zeros of \(M(s)\) occur with even order in this real function and do not create
false sign changes.  A mollifier may therefore reduce the mean size of the
detector without changing its sign.  Selberg used the coefficients of
\(\zeta(s)^{-1/2}\), with a linear logarithmic weight, and combined the
mollified mean square with a Littlewood-type rectangle argument to prove a
positive proportion of critical-line zeros \cite{Selberg,Titchmarsh}.

The first explicit numerical evaluation of Selberg's implicit constant is
usually credited to S.-H. Min.  Titchmarsh records that the constant was
calculated in Min's 1947 Oxford D.Phil. dissertation
\cite[\S10.9, pp.~268--269]{Titchmarsh,MinThesis}.  Secondary sources
report \(1/14{,}074{,}731\) for the dissertation and \(1/60{,}000\) for
Min's later paper in the \emph{Journal of Peking University}
\cite{ZhangLiuMin}.  Since neither original has been inspected by the author,
we record these two numerical attributions as secondary-source reports.

The quantitative problem splits into three parts: a sign-change inequality,
a mean-value theorem for an analytic half of the mollified functional
equation, and an arithmetic optimisation of the mollifier coefficients.
Zhuravlev's paper is a particularly explicit implementation of this scheme.
It allows general initial coefficients, reduces the mean square to the form
\(S_b(1,U)\), and then chooses one multiplicative sequence to obtain a
numerical constant.

\subsection{Levinson's argument-principle method}

Levinson introduced a different zero detector.  One forms a differential
combination such as
\[
 G(s)=\zeta(s)+\frac{\zeta'(s)}{\log T},
\]
or, more generally,
\(V(s)=Q(-\log(T)^{-1}d/ds)\zeta(s)\), and applies the argument principle on
a line
\(\sigma_0=\tfrac12-R/\log T\).  A mollifier approximating \(1/\zeta(s)\)
is then used to estimate a twisted second moment of \(V(s)\).  This counts zeros
through complex-analytic variation on a displaced line, rather than through
sign changes of a real function.
In more detail, the point of the differential combination is that
$V(s)$ inherits the zeros of $\zeta(s)$ that lie on the critical line but has
its own zeros pushed off it in a controlled way.  Counting the zeros of
$V(s)$ in the rectangle $\sigma_0<\Re s<\tfrac12$, $T<\Im s<2T$ by the
argument principle, and converting that count by Littlewood's lemma, replaces
it by $\frac1{2\pi}\int_T^{2T}\log|V(\sigma_0+it)|\dd t$ up to a term the
functional equation supplies exactly.  Every zero of $\zeta(s)$ off the
critical line contributes to the rectangle count, so an upper bound for the
integral is a lower bound for the proportion on the line.  The integral is
estimated by Jensen's inequality from the mollified second moment
$\int_T^{2T}|V(\sigma_0+it)\psi(\sigma_0+it)|^2\dd t$, where $\psi(s)$ is a
Dirichlet polynomial approximating $\zeta(s)^{-1}$; the mollifier is what
keeps that mean square bounded, and its length is what limits the resulting
proportion.

Levinson proved that more than one third of the zeros lie on the critical
line \cite{Levinson}.  Conrey obtained more than two fifths
\cite{Conrey}, and later multi-piece and higher-order mollifiers pushed the
proportion beyond \(41\%\) \cite{BuiConreyYoung}.  The strongest bound from the
mollifier methods is the theorem of Pratt, Robles, Zaharescu and Zeindler that
more than five twelfths of the zeros lie on the line \cite{PRZZ}.

\subsection{Atkinson's two-point detector}

Atkinson's 1948 argument is a third method \cite{Atkinson}.  It uses Hardy's
real function at two nearby points.  The displacement is chosen through the
Hardy phase, and the inequality
\[
 Z(t)Z(t')<0
\]
forces a zero between \(t\) and \(t'\).  The measure of the negative set is
controlled by a shifted mixed mean of \(Z(t)Z(t')\) and by Ingham's fourth
moment \cite{Atkinson,Ingham}.  Baluyot later combined this detector with a
mollifier and a twisted fourth-moment formula to give another proof of a
positive proportion \cite{Baluyot}.  His Theorem~1.1 is stated qualitatively,
but the closing section of that paper optimises the argument explicitly and
gives \(\kappa_0>0.0001049\), rising to \(\kappa_0>0.0086729\) if his
fourth-moment estimates are granted for mollifiers of any length.

Selberg and Atkinson are both sign-change methods, but their statistics are
different: Selberg uses a one-point mollified second moment, whereas Atkinson
uses a two-point correlation and a fourth-moment majorant.  Levinson's method
is different from both, since it applies the argument principle to a
mollified differential combination off the critical line.

\subsection{The pair-correlation method}
\label{sec:paircorr}

The three methods above are local.  Each forms a mollified detector, a
Dirichlet polynomial times \(\zeta(s)\) or a differential combination of
\(\zeta(s)\), and reads critical zeros off a first, second or fourth moment of
that detector.  A fourth route is global, and works through the pair
correlation of the zeros.  Montgomery's 1973 study of the pair correlation, via
Weil's explicit formula, showed under the Riemann hypothesis that at least two
thirds of the zeros are simple.  The explicit formula supplies a Hermitian form
\[
 W(f,g)=\sum_\rho m_\rho\,\widehat f(\gamma_\rho)\widehat g(\gamma_\rho)
\]
over the zeros, whose positivity on every test function is equivalent to the
Riemann hypothesis.  Montgomery's evaluation of the associated prime-side second
moment is a mean value of a Dirichlet polynomial of length \(T\) and is
unconditional; the hypothesis entered only to read the zero side termwise as a
positive sum over real ordinates.

That last step was made unconditional in 2026, raising the record for the
proportion of zeros on the critical line to two thirds \cite{AlpogeFurman}.
The idea is to replace the single Hermitian form $W$ by a finite matrix.  One
takes a family of about $N(T,2T)$ modulated copies of one fixed window,
equispaced through $[T,2T]$, and assembles the values of the explicit formula
on them into a finite real symmetric matrix.  On the zero side the matrix is a
sum of two pieces: each zero actually on the critical line contributes a
positive rank-one piece, while each pair of zeros off the line contributes a
piece with one positive and one negative direction.  Hence a matrix with few
positive directions can accommodate only few on-line zeros, and the classical
inertia and rank--trace inequalities of linear algebra make this quantitative.
On the prime side the same matrix has a size that Montgomery's mean value
pins down unconditionally.  Comparing the two sides bounds the number of
on-line zeros from below, and the strength of the bound depends only on the
choice of window: the indicator window gives two thirds, and the
Montgomery--Taylor window gives $0.6725$.  What matters for us is that the
positivity is used globally, over the whole family of zeros at once, rather
than locally through a detector.

Shortly after, Lamzouri \cite{Lamzouri} obtained the same bound by a shorter
argument.  Replacing the finite-dimensional matrix framework above by a single
inequality in the Hilbert space of the explicit formula, he applies Montgomery's
pair-correlation theorem directly.

\begin{figure}[t]
\centering
\resizebox{\textwidth}{!}{%
\footnotesize
\begin{tikzpicture}[line cap=round]
  \draw[black,line width=1.1pt] (0,0) -- (16,0);
  \foreach \x in {0,5.3333,6.6667,8.0,10.6667,16.0}{\draw[gray!35] (\x,-0.09) -- (\x,0.09);}
  \foreach \x/\lab in {0/{$0$},5.3333/{$\tfrac13$},6.6667/{$\tfrac{5}{12}$},8.0/{$\tfrac12$},10.6667/{$\tfrac23$},16.0/{$1$}}
     {\node[gray!85,anchor=north,font=\scriptsize] at (\x,-0.15) {\lab};}
  \node[anchor=north,font=\scriptsize\itshape] at (8.0,-0.52)
       {proportion of zeros proved to lie on the critical line};
  \draw[cAtk,line width=1.4pt] (0.03,-0.16) -- (0.03,0.16);
  \node[cAtk,anchor=south east,align=right,font=\scriptsize] at (0.25,0.24)
       {\textbf{Atkinson}\\[-1pt] $0.000105$};
  \draw[cSel,line width=1pt] (0.40,-0.12) -- (0.40,0.12);
  \draw[cSel,line width=2pt] (1.12,-0.18) -- (1.12,0.18);
  \node[cSel,anchor=south,align=center,font=\scriptsize] at (1.55,0.24)
       {\textbf{Selberg}\\[-1pt] $0.025\!\to\!\mathbf{0.070}$};
  \draw[cLev,line width=1pt] (5.3333,-0.12) -- (5.3333,0.12);
  \draw[cLev,line width=1pt] (6.40,-0.12) -- (6.40,0.12);
  \draw[cLev,line width=1pt] (6.56,-0.12) -- (6.56,0.12);
  \draw[cLev,line width=2pt] (6.6667,-0.18) -- (6.6667,0.18);
  \node[cLev,anchor=south,align=center,font=\scriptsize] at (6.0,0.24)
       {\textbf{Levinson}\\[-1pt] $\tfrac13\!\to\!\mathbf{\tfrac{5}{12}}$};
  \draw[cPair,line width=2pt] (10.76,-0.20) -- (10.76,0.20);
  \node[cPair,anchor=south,align=center,font=\scriptsize] at (10.76,0.26)
       {\textbf{Pair correlation}\\[-1pt] $\mathbf{0.6725}$};
\end{tikzpicture}%
}
\caption{Lower bounds on the proportion of nontrivial zeros proved to lie on the
critical line, shown on a single axis for comparison.  Each method appears in its
own colour, with its current record in bold and earlier records as thin ticks.
Atkinson's constant is explicit but very small, so its tick sits
at the origin at this scale; Baluyot's optimisation of the constant in
Atkinson's method gives \(0.000105\), rising
to \(0.0087\) if his fourth-moment input is granted for mollifiers of any
length.}
\label{fig:proportions}
\end{figure}

\section{Zhuravlev's argument and the corrections required}
\label{sec:sourceguide}

Unlike Levinson and Atkinson, Zhuravlev did not introduce a new detector.  His
contribution was to carry Selberg's scheme through to an explicit constant for a
general Dirichlet polynomial, by the analytic-half and mean-value steps outlined
in Section~\ref{sec:intro-method}.  This section and the two that follow
reconstruct that implementation, identify the normalisation discrepancy behind
the corrected benchmark, and prove the coefficient-uniform estimates needed for
the corrected theorem.  We follow Zhuravlev's four-stage architecture, but
distinguish quoted inputs from corrected or independently reproved statements.

\subsection{Structure of the argument}

For a Dirichlet polynomial \(M_U(s)\), form
\(
 Y_U(s)=\zeta(s)M_U(s)M_U(1-s).
\)
On the critical line its sign is that of Hardy's function.  Lavrik's
approximate functional equation splits \(Y_U(s)\) into two conjugate halves and
defines an analytic half $\Lambda_U(s)$, a Dirichlet series in the
generalised frequencies of $Y_U(s)$, of which $Y_U(s)$ is twice the real part
after multiplication by the unimodular factor $\aleph(s)$; we write
$\beta_U(s)=\aleph(s)\Lambda_U(s)$ for that rotated half, so that
$\aleph(s)Y_U(s)=2\Real\beta_U(s)$ on the critical line.  A rectangle form of Littlewood's lemma,
followed by Jensen's inequality, bounds sign changes in terms of the second
moment of \(\beta_U(s)\) on
\(\operatorname{Re}s=\tfrac12-x/2\), where $x=1-2\delta$ measures how far
that line has been displaced to the left of the critical line.  The diagonal part of this moment is
then expressed through \(S_b(1,U)\).  The first three stages are analytic;
the last is the arithmetic optimisation addressed in the second half of this
paper.

\subsection{Zhuravlev's coefficient choice}
\label{sec:Zhcoeff}

Zhuravlev introduces
\[
 \beta_d=\frac{f(d)}{N},
 \qquad
 N=\sum_{d\le U}\mu(d)f(d),
\]
where \(f\) is squarefree-supported and multiplicative.  For primes larger
than \(10\),
\(
 f(p)=-1/(2(p-1)),
\)
while
\[
 f(2)=-\frac14,
 \qquad f(3)=-\frac{17}{100},
 \qquad f(5)=-\frac1{10},
 \qquad f(7)=-\frac7{100}.
\]
The coefficients \(b_\ell\) are recovered from
\[
 \frac{b_\ell}{\ell}
 =\sum_{d\le U}\mu\!\left(\frac d\ell\right)\beta_d.
\]
We shall need the following elementary bounds in the analytic argument.
Because \(f\) is squarefree-supported, is negative at every prime, and is
multiplicative,
\[
 \mu(d)f(d)=|f(d)|,\qquad
 N=\sum_{d\le U}|f(d)|,\qquad 1\le N\le U.
\]
If \(\ell\) is not squarefree, then \(b_\ell=0\).  If \(\ell\) is squarefree,
write \(d=\ell q\) in the preceding inversion formula.  Then
\[
 \frac{b_\ell}{\ell}
 =\frac{f(\ell)}{N}
  \sum_{\substack{q\le U/\ell\\(q,\ell)=1}}|f(q)|,
 \qquad
 |b_\ell|\le \ell|f(\ell)|
 =\prod_{p\mid\ell}p|f(p)|\le1.
\]
Here \(p|f(p)|\le0.51\) for \(p=2,3,5,7\), while
\(p|f(p)|=p/(2(p-1))\le11/20\) for \(p\ge11\).  In particular, if
\(U>10\) is prime, only \(d=U\) contributes to the terminal coefficient and
\begin{equation}\label{eq:Zh-terminal}
 \frac{b_U}{U}=\frac{f(U)}{N}
 =-\frac{1}{2(U-1)N},
 \qquad
 |b_U|\ge\frac1{2U},
 \qquad
 \log^+\frac1{|b_U|}\ll\log U.
\end{equation}
A Möbius decomposition gives
\[
 S_b(1,U)\le N^{-4}\sum_{d\le U^2}\varphi(d)F(d)^2,
\]
where \(\varphi(d)F(d)^2\) is multiplicative and, with
\(u_p=|f(p)|\),
\begin{align*}
 \varphi(p)F(p)^2&=(p-1)(2u_p+3u_p^2)^2,\\
 \varphi(p^2)F(p^2)^2&=p(p-1)u_p^4,
\end{align*}
with no higher prime-power terms.

\begin{proposition}\label{prop:Zh-arithmetic}
The preceding coefficients satisfy
\begin{equation}\label{eq:Zh-gamma}
 S_b(1,U)\le
 \left(\frac{3\pi^2}{16}+o(1)\right)\frac1{\log U}.
\end{equation}
\end{proposition}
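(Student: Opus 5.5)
The plan is to start from the Möbius decomposition already recorded above,
\[
 S_b(1,U)\le N^{-4}\sum_{d\le U^2}\varphi(d)F(d)^2 .
\]
I will find the asymptotics of numerator and denominator separately with a mean-value theorem of Wirsing type for non-negative multiplicative functions. Both functions here are supported on cube-free (in fact on squarefree and square-of-squarefree) integers. Their prime values satisfy $p\,g(p)\to\kappa$, with the error tending to zero fast enough, and $\sum_p g(p^2)$ converges. I will use Wirsing's theorem in the form
\[
 \sum_{n\le x}g(n)\sim\frac{e^{-\gamma_0\kappa}}{\Gamma(\kappa+1)}\prod_{p\le x}\Bigl(1+g(p)+g(p^2)+\cdots\Bigr),
\]
where $\gamma_0$ denotes Euler's constant. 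I write $\gamma_0$ so as not to clash with the paper's $\gamma$. This form is equivalent to the usual $(\log x)^{\kappa}$ form by Mertens' theorem.

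\emph{Denominator.} Take $g(d)=|f(d)|$. For $p>10$ we have $p|f(p)|=p/(2(p-1))\to\tfrac12$, so $\kappa=\tfrac12$ and $\Gamma(\tfrac32)=\sqrt\pi/2$. This gives
\[
 N\sim\frac{2e^{-\gamma_0/2}}{\sqrt\pi}\prod_{p\le U}(1+u_p),
 \qquad\text{hence}\qquad
 N^4\sim\frac{16e^{-2\gamma_0}}{\pi^2}\prod_{p\le U}(1+u_p)^4 .
\]

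\emph{Numerator.} Take $g=\varphi F^2$, with $g(p)=(p-1)(2u_p+3u_p^2)^2$ and $g(p^2)=p(p-1)u_p^4$. For large $p$, $g(p)=(p-1)\bigl(\tfrac1{p-1}+O(p^{-2})\bigr)^2=\tfrac1p+O(p^{-2})$, so $\kappa=1$ and $g(p^2)=O(p^{-2})$. This gives
\[
 \sum_{d\le U^2}\varphi(d)F(d)^2\sim e^{-\gamma_0}\prod_{p\le U^2}\bigl(1+g(p)+g(p^2)\bigr).
\]

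\emph{Taking the quotient.} Dividing, the factors $e^{-\gamma_0}$ and $e^{-2\gamma_0}$ leave
\[
 S_b(1,U)\le\bigl(1+o(1)\bigr)\frac{\pi^2e^{\gamma_0}}{16}\cdot
 \frac{\prod_{p\le U^2}(1+g(p)+g(p^2))}{\prod_{p\le U}(1+u_p)^4}.
\]
Next I split the product in the numerator at $U$. For $U<p\le U^2$ the factors are $1+1/p+O(p^{-2})$, and by Mertens their product is $2+o(1)$. For $p\le U$ I pair each numerator factor with $(1+u_p)^4$ and insert $(1-1/p)$ factors. Using Mertens, $\prod_{p\le U}(1-1/p)^{-1}\sim e^{\gamma_0}\log U$, this rewrites the quotient as
\[
 \frac{2}{\log U}\,e^{-\gamma_0}\,\prod_{p}\frac{(1-1/p)\,(1+g(p)+g(p^2))}{(1+u_p)^4\,(1-1/p)^{2}}\,\bigl(1+o(1)\bigr).
\]
The local factors are $1+O(p^{-2})$ because $4u_p=2/(p-1)$ matches the two powers of $(1-1/p)^{-1}$. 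So the infinite product converges and the $e^{\pm\gamma_0}$ factors cancel. Collecting everything, $S_b(1,U)\sim\dfrac{\pi^2}{8}\,\Pi\,\dfrac1{\log U}$, where $\Pi$ is that absolutely convergent Euler product.

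\emph{Main obstacle.} The proposition then reduces to the numerical inequality $\Pi\le\tfrac32$. This is the step I expect to be hardest, and the place where Zhuravlev's tuned values $f(2),f(3),f(5),f(7)$ enter.

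For $p>10$ the local factor is explicit. With $u_p=1/(2(p-1))$, direct expansion gives $1+\dfrac{c}{p^2}+O(p^{-3})$ for an explicit $c$. I would bound the tail $\prod_{p\ge11}$ rigorously by $\exp\bigl(\sum_{p\ge11}c'/p^2\bigr)$ with an explicit $c'$ that dominates the exact factor for all $p\ge11$. I would then multiply by the four exact small-prime factors with $u_2=\tfrac14$, $u_3=\tfrac{17}{100}$, $u_5=\tfrac1{10}$ and $u_7=\tfrac7{100}$, and check by a finite computation that the total is at most $\tfrac32$.

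If the product turns out to exceed $\tfrac32$, the fallback is to locate where $3\pi^2/16$ comes from in Zhuravlev's original bookkeeping. It may, for instance, be the constant obtained by replacing the small-prime factors by cruder majorants, together with the leading-order Wirsing constants. I would then verify the inequality in that majorised form, which still yields the stated $\le$ bound.
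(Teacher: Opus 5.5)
Your proposal is correct and is essentially the paper's proof: the same M\"obius majorant, Wirsing's theorem with exponents $\tfrac12$ for $N$ and $1$ for $\sum\varphi F^2$, giving $S_b(1,U)\le\frac{\pi^2}{8\log U}\{\prod_pR_p(u_p)+o(1)\}$ with $R_p(u)=\frac{1+(p-1)(2u+3u^2)^2+p(p-1)u^4}{(1+u)^4(1-p^{-1})}$, followed by a finite check that this product is below $\tfrac32$. Here $\prod_pR_p(u_p)$ is your $\Pi$, and your final ``$\sim$'' should read ``$\le(1+o(1))$'' because of the M\"obius step.

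The check succeeds, so your fallback is unnecessary. For $p>10$ one has exactly $R_p(u_p)=1+\frac{17q^2+27q+9}{(2q+1)^4}\le1+\frac{17}{16q^2}$ with $q=p-1$. The factors with $p\le31$ multiply to less than $1.446$, with the four tuned primes alone giving about $1.403$. The primes $p>31$ contribute less than $e^{17/480}<1.037$.
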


\begin{proof}
Applying Wirsing's mean-value theorem \cite{Wirsing} with exponents
\(1/2\) and \(1\), respectively, gives
\[
 S_b(1,U)\le \frac{\pi^2}{8\log U}
 \left\{\prod_p R_p(u_p)+o(1)\right\},
\]
where
\[
 R_p(u)=
 \frac{1+(p-1)(2u+3u^2)^2+p(p-1)u^4}
 {(1+u)^4(1-p^{-1})}.
\]
For \(p>10\), direct simplification at
\(u=1/(2(p-1))\) gives
\[
 R_p(u)\le1+\frac{17}{16(p-1)^2}.
\]
A finite rational calculation gives
\[
 \prod_{p\in\{2,3,5,7\}}R_p(u_p)
 \prod_{11\le p\le31}
 \left(1+\frac{17}{16(p-1)^2}\right)<1.446.
\]
For the remaining primes,
\[
 \log\prod_{p>31}
 \left(1+\frac{17}{16(p-1)^2}\right)
 \le \frac{17}{16}\sum_{n\ge32}\frac1{(n-1)^2}
 <\frac{17}{480},
\]
so the tail is less than \(e^{17/480}<1.037\).  Hence
\(\prod_pR_p(u_p)<3/2\), which proves \eqref{eq:Zh-gamma}.
\end{proof}

The shape of these coefficients is not arbitrary, and neither is the loss
they appear to incur.  Section~\ref{sec:Zh-exact} shows that
$pf(p)\to-\tfrac12$ is forced by the power of $\log U$; that the restriction to
squarefree support costs under $5\%$; that Zhuravlev's normalisation $N$
silently supplies a logarithmic profile which is worth a factor $0.81$; and
that his coefficients, evaluated by the machinery of
Sections~\ref{sec:sharp}--\ref{sec:weights} instead of by Wirsing's inequality,
already give $\kappa_0\ge0.057$.  Most of the distance between
\eqref{eq:Zh-gamma} and the results below is therefore in the mean-value
estimate rather than in the coefficients, which are close to the best available
in their class.

\subsection{Corrections to the printed argument}
\label{sec:corrections}

There are three issues relevant to the proof.
\begin{enumerate}[label=\textup{(\roman*)}]
 \item The source states
 \(\aleph(s)Y_U(s)=2\operatorname{Re}\beta_U(s)\), but its later logarithmic
 inequality is written as if the factor \(2\) were absent.  The missing term
 is \(-T\log2\).  At fixed arithmetic input this divides the exact unrounded
 output \(8/(3\pi^2e)\) by four and gives \(2/(3\pi^2e)\).
 Zhuravlev subsequently rounded the uncorrected output down to \(2/21\);
 consequently the corrected exact value is not literally \((2/21)/4\).

 \item The secondary term in the printed truncated-diagonal formula is not
 the coefficient-uniform identity for arbitrary \(b_n\).  Direct expansion
 gives the non-negative quadratic form \(Q_b(2\delta,U)\) in
 Lemma~\ref{lem:truncated-diagonal}.  Its coefficient is
 \(\zeta(2\delta)<0\), which is exactly what the upper bound requires.

 \item Several exponents and an integration endpoint in the printed
 off-diagonal formula are indistinct.  We therefore do not reconstruct a
 theorem by conjecturally correcting that display.  Proposition~\ref{prop:Zhflex}
 uses instead an independent spacing argument for the rational frequencies.
\end{enumerate}
The original copy also resolves a number of local typographical readings,
including the denominator \(\Gamma(s/2)(s-1)\) in Lavrik's polar term and the
prime \(p=U\) in the horizontal Jensen argument.  These local corrections do
not alter the leading constant.

There is one further editorial correction to the electronic source material that we have of the original paper.
In the facsimile on original p.~44, and in Lavrik's defining integral, the
kernel is \(X^{-z}\).  An earlier electronic transcription and translation
omitted the minus sign and displayed \(X^z\).  We use the facsimile reading
\(X^{-z}\) below.  This is a transcription error, not an error in
Zhuravlev's printed paper.

\subsection{Original and modern ingredients}

The zero detector, Littlewood's rectangle argument, Lavrik's approximate
functional equation, Wirsing's theorem, and direct mean-value expansions were
all available in Zhuravlev's period.  Our reconstruction changes neither the
detector nor its conceptual basis.  The modern elements are the
normalisation-invariant formulation \eqref{eq:intro-master}, the
coefficient-uniform rational-frequency proof of the off-diagonal estimate
(Section~\ref{sec:Zsource}), the four-variable Euler-product and
polyhedral analysis (Sections~\ref{sec:euler}--\ref{sec:residue}), the
uniform complex-power Selberg--Delange transfer
(Section~\ref{sec:transfer}), and the rigorous interval certification of
the weighted constant (Section~\ref{sec:weights} and
Appendix~\ref{app:psd}).  These tools make the method easier
to verify and expose the arithmetic optimisation problem more clearly, but
they should not be confused with a new zero-detection principle.

\section{Zhuravlev's sign-change reduction}
\label{sec:Zhreduction}
\label{sec:sign}

We now give the part of Zhuravlev's argument that converts a mollified
mean-square estimate into critical-line zeros.  This section is independent of
our later choice of coefficients.

Let
\[
 M_U(s)=\sum_{n\le U}b_n n^{-s},
 \qquad b_n\in\mathbb R,
 \qquad b_1=1,
 \qquad |b_n|\le1.
\]
Also put
\[
 Y_U(s)=\zeta(s)M_U(s)M_U(1-s).
\]
Since the two mollifier factors are interchanged by $s\mapsto1-s$, $Y_U$ has
the same functional equation as $\zeta$:
\[
 Y_U(s)=\chi(s)Y_U(1-s).
\]
Set
\begin{equation}\label{eq:aleph}
 \aleph(s)=\chi(s)^{-1/2}
 =\left\{\pi^{1/2-s}
 \frac{\Gamma(s/2)}{\Gamma((1-s)/2)}\right\}^{1/2},
\end{equation}
with the branch chosen continuously in the region under consideration.  On
$\operatorname{Re}s=\tfrac12$ one has $|\aleph(s)|=1$ and
\[
 \aleph(s)Y_U(s)
 =\aleph(s)\zeta(s)|M_U(s)|^2\in\mathbb R.
\]
Consequently its sign changes occur only at zeros of $\zeta$ of odd
multiplicity; zeros of $M_U$ enter through $|M_U|^2$ and have even order.

Expand $Y_U$ as a generalised Dirichlet series
\[
 Y_U(s)=\sum_{\xi>0}a_\xi\xi^{-s},
\]
where the frequencies are positive rationals.  Lavrik's symmetric approximate
functional equation, in the form used by Zhuravlev, constructs an analytic
function $\Lambda_U(s)$ representing one half of this functional equation.
We write
\[
 \beta_U(s)=\aleph(s)\Lambda_U(s).
\]
The two properties needed below are
\begin{equation}\label{eq:critical-real-part}
 \aleph(s)Y_U(s)=2\operatorname{Re}\beta_U(s)
 \qquad (\operatorname{Re}s=\tfrac12)
\end{equation}
and, for $s=\sigma+it$ with $t\asymp T$,
\begin{equation}\label{eq:sign-AFE-preview}
 \Lambda_U(s)=
 \sum_{\xi<\sqrt{t/(2\pi)}}a_\xi\xi^{-s}
 +O\!\left(t^{-\sigma/2}U\log^3T\right).
\end{equation}
Section~\ref{sec:Zsource} states and proves the uniform form of
\eqref{eq:sign-AFE-preview}, together with the horizontal argument bounds and
the off-diagonal estimate.  Here we show how these inputs are used.

Let $A(T,2T)$ denote the number of distinct ordinates in $[T,2T]$ at which
$\zeta(\tfrac12+it)$ has a zero of odd multiplicity.  Then
$A(T,2T)\le N_0(2T)-N_0(T)$.

\begin{lemma}
\label{lem:rectangle-sign}
Let $0<\delta<\tfrac12$, put $x=1-2\delta$, and suppose that
\[
 \aleph(s)Y_U(s)=c\operatorname{Re}\beta_U(s)
 \qquad (\operatorname{Re}s=\tfrac12),
 \qquad c>0.
\]
Assume that the net horizontal argument increments of $\beta_U$ in the
rectangle
$\delta\le\operatorname{Re}s\le\tfrac12$,
$T\le\operatorname{Im}s\le2T$, and the corresponding auxiliary boundary
increments, contribute $o(T)$.  Then
\begin{equation}\label{eq:rectangle-sign}
 \frac{\pi x}{2}A(T,2T)
 \ge -\int_T^{2T}\log|\beta_U(\delta+it)|\,dt
      -T\log c-o(T).
\end{equation}
\end{lemma}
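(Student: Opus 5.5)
The plan is to apply Littlewood's lemma to $\beta_U$ itself on the rectangle $R=\{\delta\le\sigma\le\tfrac12,\ T\le t\le2T\}$, and to bound the contribution of the right-hand side $\sigma=\tfrac12$ using the sign structure of $\operatorname{Re}\beta_U$. I expect the statement to come from combining the real and imaginary parts of $\oint_{\partial R}\log\beta_U(s)\,ds$. The real part is to supply the boundary integrals $\int\log|\beta_U|\,dt$ along the vertical sides. The imaginary part is to supply $\int_\delta^{1/2}\arg\beta_U\,d\sigma$ along the horizontal sides, together with $\int\arg\beta_U\,dt$ along the vertical sides. I cannot pin down in advance which way the resulting inequality for $-\int\log|\beta_U(\delta+it)|\,dt$ points, and this is the step I flag below as the main obstacle.

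\emph{Step 1 (branch of the logarithm).} Fix a branch of $\log\beta_U$ on $R$ minus horizontal cuts running leftward from the zeros of $\beta_U$ in $R$, in the usual way. Littlewood's identity then reads
\[
\oint_{\partial R}\log\beta_U(s)\,ds=-2\pi i\sum_{\rho\in R}(\operatorname{Re}\rho-\delta).
\]
The horizontal increments of $\arg\beta_U$, and the auxiliary boundary contributions along the cuts, are exactly the terms the hypothesis declares to be $o(T)$, so I drop them at once.

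\emph{Step 2 (the critical-line side).} On $\sigma=\tfrac12$ write $\beta_U=\operatorname{Re}\beta_U+i\operatorname{Im}\beta_U$. By hypothesis $c\operatorname{Re}\beta_U(\tfrac12+it)=Z(t)|M_U(\tfrac12+it)|^2$. Hence $\operatorname{Re}\beta_U$ changes sign on $[T,2T]$ only at the at most $A(T,2T)$ ordinates of odd-order critical zeros. Between consecutive such points, a continuous $\arg\beta_U(\tfrac12+it)$ stays in a half-open interval of length $\pi$. So every quantity built from $\arg\beta_U$ on the right edge (its total variation, its mean, or the zero count it controls via the argument principle) is at most $\pi A(T,2T)+O(1)$.

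Weighting by the horizontal distance $\tfrac12-\delta=x/2$ that appears in Littlewood's identity turns this into the factor $\frac{\pi x}{2}A(T,2T)$. The modulus on the right edge is handled by
\[
|\beta_U(\tfrac12+it)|\ \ge\ |\operatorname{Re}\beta_U(\tfrac12+it)|=c^{-1}|Z(t)||M_U(\tfrac12+it)|^2 .
\]
Taking logarithms and integrating produces the term $-T\log c$; in Zhuravlev's normalisation $c=2$ this is exactly the term $-T\log2$ noted in Section~\ref{sec:corrections}.

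\emph{Step 3 (assembly).} Take the appropriate part of the identity in Step~1. The left-edge term is $-\int_T^{2T}\log|\beta_U(\delta+it)|\,dt$. The right-edge terms are bounded by Step~2. The zero sum $\sum_{\rho\in R}(\operatorname{Re}\rho-\delta)$ is non-negative, and I will use this to drop it in the favourable direction. Together with the $o(T)$ horizontal terms, this gives \eqref{eq:rectangle-sign}.

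The main obstacle is Step~3, specifically getting the signs to close. The naive use of the real part runs the wrong way: it gives $\int\log|\beta_U(\delta+it)|\ge\int\log|\beta_U(\tfrac12+it)|$, which bounds the wrong side. So I expect the sign-change count must enter through the argument on the critical line, not through the modulus. Concretely, I would first try the following. Count the zeros of $\beta_U$ in $R$ by the argument principle. The variation of $\arg\beta_U$ on the right edge is at most $\pi A$, and the other three edges contribute $o(T)$ by hypothesis. This bounds the number of zeros, each with weight $\operatorname{Re}\rho-\delta\le x/2$. I would then feed this bound into the real part of Littlewood's identity, so that the zero term is at most $\frac{\pi x}{2}A$.

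If the orientation still fails, the fallback is to apply the same argument to $1/\beta_U$, or to $\beta_U$ normalised by its value far to the right where $a_1=b_1^2=1$. In both fallbacks the left-edge modulus enters with a minus sign, which is the form the statement requires.
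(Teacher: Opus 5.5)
There is a genuine gap, and it lies exactly at the obstacle you flagged. In Step~1 you discard the horizontal-edge terms of Littlewood's identity as though the hypothesis made them $o(T)$. Those terms are $\int_\delta^{1/2}\arg\beta_U(\sigma+iT)\,d\sigma$ and $\int_\delta^{1/2}\arg\beta_U(\sigma+2iT)\,d\sigma$, which are integrals of argument \emph{values}, and their difference is precisely where $A(T,2T)$ enters. The hypothesis controls only the increment along each horizontal segment, $\arg\beta_U(\sigma+iu)-\arg\beta_U(\tfrac12+iu)$ for $u\in\{T,2T\}$, once a branch has been fixed continuously along the critical edge. Writing each argument as its value at $\tfrac12+iu$ plus that increment gives
\begin{align*}
 \int_\delta^{1/2}\{\arg\beta_U(\sigma+2iT)-\arg\beta_U(\sigma+iT)\}\,d\sigma
 &=\frac x2\{\arg\beta_U(\tfrac12+2iT)-\arg\beta_U(\tfrac12+iT)\}+o(T)\\
 &\le\frac{\pi x}{2}A(T,2T)+o(T).
\end{align*}
The last step is your own half-plane observation from Step~2, which is correct. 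This is the paper's argument.

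With it, the direction you call wrong is the right one. The lemma is equivalent to the \emph{lower} bound $\int_T^{2T}\log|\beta_U(\delta+it)|\,dt\ge-T\log c-\frac{\pi x}{2}A(T,2T)-o(T)$. Littlewood's inequality, with the non-negative zero sum dropped, reads
\begin{align*}
 \int_T^{2T}\log|\beta_U(\delta+it)|\,dt
 \ge{}&\int_T^{2T}\log|\beta_U(\tfrac12+it)|\,dt\\
 &-\int_\delta^{1/2}\{\arg\beta_U(\sigma+2iT)-\arg\beta_U(\sigma+iT)\}\,d\sigma.
\end{align*}
It delivers the lemma once you insert the first display and a lower bound $-T\log c-o(T)$ for the critical-edge integral.

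Your proposed repair does not close the gap. Counting zeros in the rectangle by the argument principle needs the argument change along the left edge $\sigma=\delta$. The hypothesis does not control it, and it has no reason to be small: the factor $\aleph(\delta+it)$ alone turns through an angle of order $T\log T$. In any case, an upper bound for the zero sum would bound $\int_T^{2T}\log|\beta_U(\delta+it)|\,dt$ only from above, which is not what is needed. The fallback to $1/\beta_U$ fails because $1/\beta_U$ has poles at the zeros of $\beta_U$, so Littlewood's lemma no longer applies.

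Step~2 is also incomplete. The bound $|\beta_U(\tfrac12+it)|\ge c^{-1}|\zeta(\tfrac12+it)||M_U(\tfrac12+it)|^2$ yields $-T\log c$ only once you know that $\int_T^{2T}\log|\zeta(\tfrac12+it)|\,dt$ and $\int_T^{2T}\log|M_U(\tfrac12+it)|\,dt$ are at least $-o(T)$. The paper obtains this by applying Littlewood's inequality to $\zeta$ and to $M_U$ on the auxiliary rectangle $\tfrac12\le\sigma\le3$. There $|\zeta(3+it)-1|$ and $|M_U(3+it)-1|$ are at most $\zeta(3)-1<1$, and the horizontal argument increments are $O(\log T)$ and $O(U)$. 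This, rather than contributions along cuts, is where the auxiliary boundary increments of the hypothesis enter.
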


\begin{proof}
For a function $h$ analytic in a rectangle
$\sigma_1\le\operatorname{Re}s\le\sigma_2$,
$T\le\operatorname{Im}s\le2T$, Littlewood's rectangle argument gives
\begin{align}\label{eq:Littlewood-inequality}
 \int_T^{2T}&\log|h(\sigma_1+it)|\,dt
 +\int_{\sigma_1}^{\sigma_2}\arg h(\sigma+2iT)\,d\sigma\notag\\
 &\qquad\ge
 \int_T^{2T}\log|h(\sigma_2+it)|\,dt
 +\int_{\sigma_1}^{\sigma_2}\arg h(\sigma+iT)\,d\sigma,
\end{align}
with the usual limiting convention if a zero lies on the boundary.

On the critical line, the hypothesis of the lemma gives
\[
 |\beta_U(\tfrac12+it)|
 \ge |\operatorname{Re}\beta_U(\tfrac12+it)|
 =\frac1c|\aleph(\tfrac12+it)Y_U(\tfrac12+it)|.
\]
Since $|\aleph|=1$ there,
\begin{equation}\label{eq:critical-log-lower}
 \int_T^{2T}\log|\beta_U(\tfrac12+it)|\,dt
 \ge
 \int_T^{2T}\log|\zeta(\tfrac12+it)|\,dt
 +2\int_T^{2T}\log|M_U(\tfrac12+it)|\,dt
 -T\log c.
\end{equation}
Apply \eqref{eq:Littlewood-inequality} separately to $\zeta$ and $M_U$ in
the rectangle $\tfrac12\le\operatorname{Re}s\le3$.  On the right edge,
$|\zeta(3+it)-1|$ and $|M_U(3+it)-1|$ are both at most
$\zeta(3)-1<1$.  Their principal logarithms therefore have absolutely
convergent Dirichlet expansions with zero constant term, so their vertical
integrals are $O(\log T)$ and $O(1)$.  The corresponding horizontal
argument increments are $O(\log T)$ for $\zeta$ and $O(U)$ for $M_U$: the
real and imaginary parts of $M_U(\sigma+it)$ are exponential polynomials in
$\sigma$ with at most $U$ terms.  It follows from
\eqref{eq:critical-log-lower} that
\begin{equation}\label{eq:critical-beta-lower}
 \int_T^{2T}\log|\beta_U(\tfrac12+it)|\,dt
 \ge -T\log c-o(T).
\end{equation}

It remains to control the horizontal boundary terms when
\eqref{eq:Littlewood-inequality} is applied to $h=\beta_U$ between $\delta$
and $\tfrac12$.  Along the critical edge, the real part of $\beta_U$ is the
real sign-detecting function $\aleph(s)Y_U(s)$.  Divide $[T,2T]$ at the sign changes of this real part and choose the
argument continuously from the lower endpoint, with the usual infinitesimal
detours around zeros.  On each intervening interval the curve
$\beta_U(\tfrac12+it)$ remains in one closed half-plane, so its argument can
change by at most $\pi$.  Consequently, for every $u\in[T,2T]$,
\[
 \left|\arg\beta_U(\tfrac12+iu)-\arg\beta_U(\tfrac12+iT)\right|
 \le \pi\{A(T,u)+1\}.
\]
Zeros of $M_U(s)$ do not create additional sign changes, because they occur in
$\aleph(s)Y_U(s)=Z(t)|M_U(s)|^{2}$
with even order.  Transporting this common branch
from the critical edge to the two horizontal edges changes it by only the
assumed $o(T)$ boundary increments.  Hence the difference of the two
horizontal argument integrals in \eqref{eq:Littlewood-inequality} is at most
$\pi(\tfrac12-\delta)A(T,2T)+o(T)$.  Using this and
\eqref{eq:critical-beta-lower} gives
\[
 \pi\left(\frac12-\delta\right)A(T,2T)
 \ge -\int_T^{2T}\log|\beta_U(\delta+it)|\,dt
      -T\log c-o(T),
\]
which is \eqref{eq:rectangle-sign} because
$\tfrac12-\delta=x/2$.
\end{proof}

Zhuravlev's arithmetic form is
\begin{equation}\label{eq:Sb}
 S_b(1,U)=\sum_{d\le U^2}\varphi(d)
 \left(\sum_{\substack{\ell,m\le U\\d\mid\ell m}}
 \frac{b_\ell b_m}{\ell m}\right)^2.
\end{equation}
The following elementary lemma records the passage from dyadic intervals to
the global lower proportion.

\begin{lemma}\label{lem:dyadic-passage}
Suppose that, for some $q\ge0$,
\[
 N_0(2T)-N_0(T)\ge q\{N(2T)-N(T)\}+o(T\log T)
\]
as $T\to\infty$.  Then $\kappa_0\ge q$.
\end{lemma}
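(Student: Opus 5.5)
The plan is to sum the dyadic hypothesis over the scales $T_j=T/2^{j}$ and use the Riemann--von Mangoldt formula $N(T)=\frac{T}{2\pi}\log\frac{T}{2\pi}-\frac{T}{2\pi}+O(\log T)$ to compare $N(T)$ with the sum of its dyadic pieces.

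The first step is to fix $\varepsilon>0$. The hypothesis provides $T_0$ such that, for $t\ge T_0$,
\[
 N_0(2t)-N_0(t)\ge q\{N(2t)-N(t)\}-\varepsilon\, t\log t .
\]
For large $T$, let $J$ be the largest integer with $T/2^{J}\ge T_0$. We apply the inequality at $t=T/2^{j}$ for $j=1,\dots,J$ and sum. Both sides telescope, giving
\[
 N_0(T)-N_0(T/2^{J})\ge q\{N(T)-N(T/2^{J})\}-\varepsilon\sum_{j\ge1}\frac{T}{2^{j}}\log\frac{T}{2^{j}} .
\]
The error sum is at most $\varepsilon T\log T$, since $\sum_{j\ge1}2^{-j}=1$.

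The second step handles the boundary terms. Because $T/2^{J}<2T_0$, the quantities $N_0(T/2^{J})\ge0$ and $N(T/2^{J})\le N(2T_0)=O_{T_0}(1)$ are bounded independently of $T$. Hence
\[
 N_0(T)\ge qN(T)-\varepsilon T\log T-O_{T_0}(1).
\]

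The third step divides by $N(T)\sim\frac{T\log T}{2\pi}$. This gives $\liminf N_0(T)/N(T)\ge q-2\pi\varepsilon$, and letting $\varepsilon\to0$ yields $\kappa_0\ge q$.

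The only point that needs care is uniformity. The $o(t\log t)$ error at the small scales must not accumulate. This is why the argument stops the summation at the fixed threshold $T_0$ rather than running it down to bounded $t$: for $j\le J$ every scale satisfies $t\ge T_0$, so the $\varepsilon$-bound applies at each one. The geometric decay of the scales then makes the total error $O(\varepsilon T\log T)$ rather than something larger. No other difficulty is expected, since $N(T)\asymp T\log T$ is the only input needed beyond the hypothesis.
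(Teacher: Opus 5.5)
Your proposal is correct and is essentially the paper's argument: both telescope the dyadic inequality over the scales $T/2^j$, use the geometric decay of those scales to keep the accumulated error $O(\varepsilon\,T\log T)$, and finish with $N(T)\sim T\log T/(2\pi)$. The only difference is bookkeeping. You fix $\varepsilon$ first and stop at the threshold $T_0(\varepsilon)$. The paper instead iterates to a fixed height, splits off a geometric tail, and lets $X\to\infty$ before $J\to\infty$.
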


\begin{proof}
Set $D(X)=N_0(X)-qN(X)$.  Applying the hypothesis with $T=X/2$ gives
\[
 D(X)\ge D(X/2)-\varepsilon(X)X\log X,
 \qquad \varepsilon(X)\to0.
\]
Iteration down to a fixed height yields
\[
 D(X)\ge -\sum_{j\ge0}
 \varepsilon(X/2^j)\frac{X}{2^j}\log X+O(1).
\]
For any fixed $J$, the first $J$ terms are $o(X\log X)$; the remaining
geometric tail is $O(2^{-J}X\log X)$.  Letting first $X\to\infty$ and then
$J\to\infty$ gives $D(X)=o_-(X\log X)$.  Since
$N(X)\sim X\log X/(2\pi)$, the result follows.
\end{proof}

The next proposition isolates the optimisation step at the heart of the
method.

\begin{proposition}
\label{prop:distilled}
Let $U\asymp T^\theta$, $\log U=\theta\log T+O(1)$, $0<\theta<1/4$, and
$x=1-2\delta\asymp1/\log T$.  Suppose
\[
 \aleph(s)Y_U(s)=c\operatorname{Re}\beta_U(s)
 \qquad (\operatorname{Re}s=\tfrac12),
 \qquad c>0,
\]
the horizontal errors in Lemma~\ref{lem:rectangle-sign} are $o(T)$, and,
uniformly in this moving range,
\begin{equation}\label{eq:mean-general}
 \frac1T\int_T^{2T}|\beta_U(\delta+it)|^2\,dt
 \le \frac{\gamma}{x\log U}(1+o(1)).
\end{equation}
Then
\[
 A(T,2T)\ge\frac{T\log U}{\pi e c^2\gamma}(1+o(1)),
 \qquad
 \kappa_0\ge\frac{2\theta}{e c^2\gamma}.
\]
\end{proposition}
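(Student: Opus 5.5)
The plan is to combine Lemma~\ref{lem:rectangle-sign} with Jensen's inequality applied to the mean square \eqref{eq:mean-general}, and then to optimise over the displacement $x$. We will take $x=y/\log U$ with $y>0$ a fixed constant, to be chosen at the end. Then $x\asymp1/\log T$, since $\log U=\theta\log T+O(1)$, so \eqref{eq:mean-general} applies, and $\delta=(1-x)/2\in(0,\tfrac12)$ for large $T$.

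\textbf{Step 1: Jensen.} Concavity of $\log$ gives
\[
 -\int_T^{2T}\log|\beta_U(\delta+it)|\,dt
 =-\frac12\int_T^{2T}\log|\beta_U(\delta+it)|^2\,dt
 \ge-\frac T2\log\Bigl(\frac1T\int_T^{2T}|\beta_U(\delta+it)|^2\,dt\Bigr).
\]
By \eqref{eq:mean-general} the right-hand side is at least $\frac T2\log\frac{x\log U}{\gamma}-o(T)$. Here we use that $\log(1+o(1))=o(1)$.

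\textbf{Step 2: the rectangle lemma.} Substituting this into \eqref{eq:rectangle-sign} gives
\[
 \frac{\pi x}{2}A(T,2T)\ge\frac T2\log\frac{x\log U}{\gamma c^2}-o(T).
\]
The term $-T\log c$ has been absorbed as $-\frac T2\log c^2$. With $x=y/\log U$ this becomes
\[
 A(T,2T)\ge\frac{T\log U}{\pi}\cdot\frac{\log(y/(\gamma c^2))}{y}-o(T\log U).
\]
Dividing the $o(T)$ error by $x$ costs a factor of $\log U$, which is still $o(T\log T)$.

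\textbf{Step 3: optimisation in $y$.} Write $a=\gamma c^2$. The function $y\mapsto y^{-1}\log(y/a)$ is maximised at $y=ea$, where it equals $1/(ea)$. This choice is admissible, since $y$ is a fixed positive constant, and it yields
\[
 A(T,2T)\ge\frac{T\log U}{\pi e c^2\gamma}\,(1+o(1)).
\]

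\textbf{Step 4: passage to $\kappa_0$.} We have $N_0(2T)-N_0(T)\ge A(T,2T)$, $\log U=\theta\log T+O(1)$, and $N(2T)-N(T)=\frac{T\log T}{2\pi}(1+o(1))$. Together these give
\[
 N_0(2T)-N_0(T)\ge\frac{2\theta}{e c^2\gamma}\{N(2T)-N(T)\}+o(T\log T).
\]
Lemma~\ref{lem:dyadic-passage} then gives $\kappa_0\ge2\theta/(ec^2\gamma)$.

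\textbf{Main obstacle.} The only delicate point is bookkeeping of uniformity. The hypotheses, namely the $o(T)$ horizontal errors and the $(1+o(1))$ in \eqref{eq:mean-general}, must hold along the specific moving choice $x=e\gamma c^2/\log U$. After division by $x$ the errors must still be $o(T\log T)$. This holds precisely because $x$ decays only like $1/\log T$, so that division multiplies $o(T)$ by $O(\log T)$. The remaining steps are the elementary calculus above.
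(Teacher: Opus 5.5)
Your proposal is correct and follows the paper's proof essentially step for step. The steps are: concavity of the logarithm applied to the mean square, substitution into the rectangle inequality with $-T\log c$ absorbed as $-\tfrac T2\log c^2$, the choice $x=e c^2\gamma/\log U$ (your $y=x\log U$ is the paper's $\gamma u$), and then Lemma~\ref{lem:dyadic-passage} with $q=2\theta/(ec^2\gamma)$.
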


\begin{proof}
By concavity of the logarithm and Cauchy's inequality,
\[
 \int_T^{2T}\log|\beta_U(\delta+it)|\,dt
 \le \frac T2\log\left(
 \frac1T\int_T^{2T}|\beta_U(\delta+it)|^2\,dt\right).
\]
The critical-line identity contributes $-T\log c$ to the rectangle lemma,
so using \eqref{eq:mean-general} gives
\[
 A(T,2T)\ge
 \frac{T}{\pi x}
 \left\{\log\frac{x\log U}{\gamma}-2\log c\right\}
 +o(T/x).
\]
Put $u=x\log U/\gamma$.  The function
$\log(u/c^2)/u$ is maximised at $u=ec^2$.  Taking
\[
 x=\frac{ec^2\gamma}{\log U}
\]
therefore yields
\[
 A(T,2T)\ge\frac{T\log U}{\pi ec^2\gamma}(1+o(1)).
\]
Finally,
\[
 N(2T)-N(T)\sim\frac{T}{2\pi}\log T,
 \qquad
 \log U=\theta\log T,
\]
so the same lower proportion holds on every sufficiently large dyadic
interval.  Lemma~\ref{lem:dyadic-passage}, with
$q=2\theta/(ec^2\gamma)$, now gives
\[
 \liminf_{T\to\infty}\frac{N_0(T)}{N(T)}
 \ge\frac{2\theta}{ec^2\gamma}.
\]
\end{proof}

The original Russian source has $c=2$.  Its equation~(5) omits the resulting
term $-T\log2$ and consequently optimises as though $c=1$.  The remainder of
the paper computes improved arithmetic constants $\gamma$ and verifies the
analytic hypotheses independently of this correction.

\section{Coefficient-uniform analytic estimates}
\label{sec:Zsource}

This section verifies the analytic input required by
Proposition~\ref{prop:distilled}.  We take Lavrik's general theorem on
analytic halves of Dirichlet functional equations as a quoted foundational
input, and derive from it the coefficient-uniform approximate functional
equation and the horizontal Jensen bound used by Zhuravlev.  For the mean
square we give a new proof of the
off-diagonal estimate, based only on the spacing of the rational frequencies.
This avoids any dependence on the typographically ambiguous exponents in
Zhuravlev's printed equation~(8), whose exact reading is not needed for the
reconstructed proof.

Let
\begin{equation}\label{eq:alambda-explicit}
 a_\xi=\sum_{\substack{n\ge1,\ \ell,m\le U\\n\ell/m=\xi}}
 \frac{b_\ell b_m}{m},
 \qquad
 Y_U(s)=\sum_{\xi>0}a_\xi\xi^{-s}\quad(\Real s>1),
\end{equation}
We now make the analytic half, and its dependence on the smoothing parameter,
explicit.  On a dyadic block \(T\le\Imag s\le2T\), fix
\[
 \rho_T^+=\exp\!\left\{i\left(\frac\pi2-\frac1T\right)\right\},
 \qquad
 \rho_T^-=\overline{\rho_T^+}.
\]
These parameters remain fixed while \(s\) moves in the rectangle; this point
is needed for analyticity.  For \(|\arg X^2|<\pi/2\), put
\begin{align}
 J(s,X)&=\frac1{2\pi i}\int_{\Real z=\Delta}
  \Gamma\!\left(\frac{s+z}{2}\right)\frac{X^{-z}}{z}\,dz,
  \qquad \Delta>\max(0,\Real s),\label{eq:Lavrik-J}\\
 F(s,X)&=\frac{J(s,X)}{\Gamma(s/2)}
 =\frac{\Gamma(s/2,X^2)}{\Gamma(s/2)}.\label{eq:Lavrik-F}
\end{align}
The last equality follows first for \(\Real s>0\) by differentiating in
\(X\), and elsewhere by analytic continuation.  Notice the minus sign in
\(X^{-z}\); it is visible in both Lavrik's formula and Zhuravlev's
facsimile.  Define the paired analytic halves
\begin{equation}\label{eq:Lavrik-half-explicit}
 \Lambda_U^\pm(s)
 =\sum_{\xi>0}a_\xi\xi^{-s}
 F\!\left(s,(\pi\rho_T^\pm)^{1/2}\xi\right)
 +\frac{\pi^{1/2}(\pi\rho_T^\pm)^{(s-1)/2}M_U(0)M_U(1)}
 {\Gamma(s/2)(s-1)}.
\end{equation}
We write \(\Lambda_U=\Lambda_U^+\) and
\(\beta_U(s)=\aleph(s)\Lambda_U^+(s)\) on the positive block.  Since the coefficients
are real,
\[
 \Lambda_U^-(\sigma-iu)
 =\overline{\Lambda_U^+(\sigma+iu)}.
\]
Lavrik's fundamental identity
\cite[Lemma~2.1, English p.~133]{Lavrik}, specialised to the functional
equation of \(Y_U\), gives
\[
 \aleph(s)Y_U(s)=2\Real\{\aleph(s)\Lambda_U^+(s)\}
 \qquad(\Real s=\tfrac12,\ T\le\Imag s\le2T).
\]
Lavrik's Theorem~1
\cite[English pp.~130--131]{Lavrik} is applied below with its required
ordinate-dependent phase.  Lemma~\ref{lem:fixed-phase-comparison} compares
that pointwise half with the fixed analytic half above.  Appendix
\ref{app:Lavrik-horizontal} derives the larger-disk bounds required for
Jensen directly from \eqref{eq:Lavrik-F}; no fixed-strip theorem is used
outside its stated range.

Put
\begin{equation}\label{eq:Rb}
 R_b(U)=\left(\sum_{\ell,m\le U}\frac{|b_\ell b_m|}{m}\right)^2.
\end{equation}
If $|b_n|\le1$, then
\begin{equation}\label{eq:Rb-bound}
 R_b(U)\ll U^2\log^2(2U).
\end{equation}

\begin{lemma}\label{lem:transition-kernel}
Uniformly for $A\ge2$ and $h>0$,
\[
 \sum_{n\ge1}
 \frac{\exp\{-(hn)^2/A^2\}}
 {1+|A-\pi(hn)^2/A|}
 \ll (1+h^{-1})\log(2A).
\]
\end{lemma}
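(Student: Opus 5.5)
Write $y=hn$ and $g(y)=\pi y^2/A$, so the denominator is $1+|A-g(y)|$.  The function $g$ equals $A$ at $y_0=A/\sqrt\pi$.  The Gaussian factor $\exp\{-y^2/A^2\}$ is then $e^{-1/\pi}\asymp1$, so near the transition the numerator is harmless.  The plan is to split the range of $y=hn$ into three parts: a bulk region $y\le y_0/2$, a transition region $y_0/2<y\le 2y_0$, and a tail $y>2y_0$.  We bound the number of lattice points $hn$ in each piece and the size of the summand there.  Throughout we use the fact that $n\mapsto hn$ places at most $1+h^{-1}L$ points in any interval of length $L$.

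In the bulk region $g(y)\le A/4$, so the denominator is $\ge 1+3A/4\gg A$ and the numerator is $\le1$.  The number of $n$ with $hn\le y_0/2$ is at most $A/(2\sqrt\pi h)$, so this part contributes $\ll h^{-1}$.  In the tail, $g(y)\ge 4A$, so the denominator is $\gg g(y)\gg y^2/A$.  The Gaussian is $\le e^{-y^2/A^2}$, which is summable over $hn$.  Comparing with an integral, the number of points in $[kA,(k+1)A]$ is $\le 1+A/h$ and the summand there is $\ll e^{-k^2}/(kA)$.  Summing over $k\ge 2/\sqrt\pi$ gives a tail contribution $\ll (1+A/h)/A\ll 1+h^{-1}$.

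The transition region carries the logarithm and is the main step.  There
\[
|A-g(y)|=\frac{\pi}{A}|y-y_0|(y+y_0)\asymp|y-y_0|,
\]
since $y+y_0\asymp A$.  So the summand is $\ll 1/(1+|y-y_0|)$ with $y=hn$.  Group the $n$ according to the integer part $k=\lfloor |hn-y_0|\rfloor$, for $0\le k\ll A$.  Each unit interval contains at most $1+h^{-1}$ points $hn$, and each contributes $\ll 1/(1+k)$.  Summing over $k$ gives
\[
\ll (1+h^{-1})\sum_{k\ll A}\frac1{1+k}\ll(1+h^{-1})\log(2A).
\]
Adding the three ranges proves the lemma.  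The only care needed is that the constants in $y+y_0\asymp A$ and in the lattice-point count are uniform in $h$.  Both hold because they depend only on the fixed ratios $1/2$ and $2$ and on the hypothesis $A\ge2$.  When $h$ is large, the points are sparse and the factor $1$ in $1+h^{-1}$ dominates; when $h$ is small, the $h^{-1}$ term dominates.
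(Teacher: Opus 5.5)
Your proof is correct and is essentially the paper's argument: the same split at $y_0/2$ and $2y_0$ with $y_0=A/\sqrt\pi$, crude bounds giving $O(1+h^{-1})$ on the two outer ranges, and unit shells $j\le|hn-y_0|<j+1$ in the transition range, each holding $O(1+h^{-1})$ points and together producing the $\log(2A)$. One cosmetic point: the tail begins at $2y_0\approx1.13A$, which lies in $[A,2A]$, so your tail sum should start at $k=1$ rather than at $k\ge2/\sqrt\pi$.
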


\begin{proof}
Put $y_0=A/\sqrt\pi$.  On $hn\le y_0/2$ and $hn\ge2y_0$ the
denominator is $\gg A+(hn)^2/A$, and comparison with an integral gives
$O(1+h^{-1})$.  In the remaining range,
\[
 1+|A-\pi(hn)^2/A|\asymp1+|hn-y_0|.
\]
Divide this range into the sets $j\le |hn-y_0|<j+1$.  Each contains
$O(1+h^{-1})$ integers $n$, and summing $(1+j)^{-1}$ for $j\ll A$ gives
the stated logarithm.
\end{proof}

\begin{lemma}\label{lem:fixed-phase-comparison}
Let \(s=\sigma+it\), where \(T\le t\le2T\) and
\(0\le\sigma\le1/2\), and put
\[
 \rho(t)=\exp\!\left\{i\left(\frac\pi2-\frac1t\right)\right\}.
\]
Let \(\widetilde\Lambda_{U,t}(s)\) be the expression in
\eqref{eq:Lavrik-half-explicit} with \(\rho_T^+\) replaced by \(\rho(t)\).
If \(b_1=1\) and \(|b_n|\le1\) for \(n\le U\), then
\begin{equation}\label{eq:fixed-phase-comparison}
 \Lambda_U(s)-\widetilde\Lambda_{U,t}(s)
 \ll t^{-\sigma/2}U\log(2U),
\end{equation}
uniformly in the displayed range.
\end{lemma}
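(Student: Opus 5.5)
The plan is to subtract the two expressions in \eqref{eq:Lavrik-half-explicit} term by term. The difference splits into a sum over frequencies and a polar term.

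\emph{Frequency sum.} The contribution from the frequencies is
\[
 \sum_{\xi>0}a_\xi\xi^{-s}\Bigl\{F\bigl(s,(\pi\rho_T^+)^{1/2}\xi\bigr)-F\bigl(s,(\pi\rho(t))^{1/2}\xi\bigr)\Bigr\}.
\]
For each $\xi$, write the difference as an integral in the phase $\phi$ along the arc from $\pi/2-1/T$ to $\pi/2-1/t$. This arc has length at most $1/T$. Put $X_\phi^2=\pi\xi^2e^{i\phi}$. From \eqref{eq:Lavrik-F},
\[
 \frac{\partial}{\partial X}\,\Gamma(s/2,X^2)=-2X^{s-1}e^{-X^2},
\]
so
\[
 \frac{\partial}{\partial\phi}F=-\frac{i}{\Gamma(s/2)}\,X_\phi^{s}e^{-X_\phi^2}.
\]
Stirling's formula gives $|\Gamma(s/2)|^{-1}\asymp e^{\pi t/4}t^{(1-\sigma)/2}$. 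On the other side, $|X_\phi^s|=(\pi\xi^2)^{\sigma/2}e^{-t\phi/2}$ and $|e^{-X_\phi^2}|=e^{-\pi\xi^2\cos\phi}$. With $\phi=\pi/2-\eta$ and $\eta\asymp1/T$, the exponential factors combine to about $e^{t\eta/2}e^{-\pi\xi^2\sin\eta}=O(1)\cdot e^{-c\xi^2/T}$. The $\xi^{\sigma}$ coming from $X^s$ cancels against $\xi^{-s}$. Each frequency therefore contributes
\[
 \ll \frac{1}{T}\,t^{(1-\sigma)/2}\,|a_\xi|\,e^{-c\xi^2/T}.
\]

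Next, sum over frequencies. Using \eqref{eq:alambda-explicit}, parametrise $\xi=n\ell/m$. Then
\[
 \sum_\xi|a_\xi|e^{-c\xi^2/T}\le\sum_{\ell,m\le U}\frac{|b_\ell b_m|}{m}\sum_{n}e^{-c(n\ell/m)^2/T}.
\]
The inner sum is $\ll 1+m\sqrt T/\ell$. This gives the bound
\[
 \ll \sqrt T\sum_{\ell,m}\frac{1}{\ell}+\sum_{\ell,m}\frac1m\ll \sqrt T\,U\log(2U),
\]
using $|b_n|\le1$. Multiplying by $T^{-1}t^{(1-\sigma)/2}$ gives $t^{-\sigma/2}U\log(2U)$, which is the claimed size. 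This crude $n$-sum is where Lemma~\ref{lem:transition-kernel} may be needed instead. The issue is the range $\xi\approx\sqrt{t/2\pi}$, where the naive bound on $|F'|$ must be accurate. If the saddle behaviour of the phase-derivative is worse than the crude exponential estimate, I would bound it by a kernel of the shape $\exp\{-(hn)^2/A^2\}/(1+|A-\pi(hn)^2/A|)$ with $h=\ell/m$ and $A\asymp\sqrt T$, and then apply the lemma.

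\emph{Polar term.} The remaining piece is
\[
 \frac{\pi^{1/2}M_U(0)M_U(1)}{\Gamma(s/2)(s-1)}\Bigl\{(\pi\rho_T^+)^{(s-1)/2}-(\pi\rho(t))^{(s-1)/2}\Bigr\}.
\]
By the mean value theorem in the phase, the bracket is $\ll T^{-1}|s-1|\,e^{-t(\pi/2-1/T)/2}$. Combining this with the Stirling estimate gives $\ll T^{-1}t^{(1-\sigma)/2}|M_U(0)M_U(1)|$. Since $|M_U(0)|\le U$ and $|M_U(1)|\ll\log(2U)$, this is $\ll t^{-(1+\sigma)/2}U\log(2U)$, which is smaller than required.

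\emph{Main obstacle.} The hardest step is uniform control of $e^{-X^2}X^s$ near the transition point $|X|^2\approx t/2$. There the exponential decay of the incomplete gamma function turns on, and the naive bound must not lose more than the logarithm that Lemma~\ref{lem:transition-kernel} supplies.
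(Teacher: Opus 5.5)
Your proof is correct and is essentially the paper's argument. The paper writes the difference as $-\Gamma(s/2)^{-1}\int_{\mathcal A_r}e^{-u}u^{s/2-1}\,du$ over the circular arc joining the two incomplete-gamma endpoints, which becomes your $\phi$-integral of $\partial_\phi F$ under $u=re^{i\phi}$. It then uses the same Stirling cancellation of $e^{\pi t/4}$ against $e^{-t\phi/2}$, the same frequency sum $\sum_\xi|a_\xi|e^{-c\xi^2/t}\ll U\sqrt t\log(2U)$, and the same mean-value treatment of the polar term.

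Your closing worry about the transition region $|X|^2\approx t/2$ is unfounded. Your bound on $\partial_\phi F$ is a pointwise modulus bound, uniform in $\xi$: the factor $e^{t\eta/2-\pi\xi^2\sin\eta}$ is at most $e\cdot e^{-c\xi^2/T}$ for every $\xi$. So Lemma~\ref{lem:transition-kernel} is not needed here.
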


\begin{proof}
Write \(a=s/2\), \(r=\pi\xi^2\),
\(\phi_T=\pi/2-1/T\), and \(\phi_t=\pi/2-1/t\).  The two incomplete-gamma
endpoints \(r e^{i\phi_T}\) and \(r e^{i\phi_t}\) are joined by the circular
arc \(\mathcal A_r\).  Hence
\[
 F(s,(\pi\rho_T^+)^{1/2}\xi)
 -F(s,(\pi\rho(t))^{1/2}\xi)
 =-\frac1{\Gamma(a)}\int_{\mathcal A_r}e^{-u}u^{a-1}\,du .
\]
On this arc,
\[
 |\phi_t-\phi_T|\le T^{-1},\qquad
 \cos(\arg u)\gg t^{-1},\qquad
 \exp\!\left\{t\left(\frac\pi4-\frac{\arg u}{2}\right)\right\}\ll1.
\]
Uniform Stirling in \(0\le\sigma\le1/2\) therefore gives
\begin{equation}\label{eq:fixed-phase-kernel}
 \left|F(s,(\pi\rho_T^+)^{1/2}\xi)
 -F(s,(\pi\rho(t))^{1/2}\xi)\right|
 \ll t^{-1/2-\sigma/2}\xi^\sigma e^{-c\xi^2/t}.
\end{equation}
Indeed, the arc length is \(r|\phi_t-\phi_T|\), while
\(|u^{a-1}|=r^{\sigma/2-1}e^{-t\arg u/2}\) and
\[
 |\Gamma(a)|^{-1}\ll t^{1/2-\sigma/2}e^{\pi t/4}.
\]

Expanding \(a_\xi\) by \eqref{eq:alambda-explicit} and using
\(\sum_{n\ge1}e^{-A n^2}\ll1+A^{-1/2}\), we obtain
\begin{align*}
 \sum_{\xi>0}|a_\xi|e^{-c\xi^2/t}
 &\le \sum_{\ell,m\le U}\frac1m\sum_{n\ge1}
 e^{-c(n\ell/m)^2/t}\\
 &\ll \sum_{\ell,m\le U}\left(\frac1m+\frac{\sqrt t}{\ell}\right)
 \ll U\sqrt t\log(2U).
\end{align*}
Thus \eqref{eq:fixed-phase-kernel}, after multiplication by
\(\xi^{-\sigma}\), contributes
\(O(t^{-\sigma/2}U\log(2U))\).

It remains to compare the polar terms.  On the interval between
\(\phi_T\) and \(\phi_t\), differentiation of
\(e^{i\phi(s-1)/2}\), followed by the same Stirling estimate and
\(|M_U(0)M_U(1)|\ll U\log(2U)\), gives
\[
 \frac{\sqrt\pi\,|M_U(0)M_U(1)|}{|\Gamma(s/2)(s-1)|}
 \left|e^{i\phi_T(s-1)/2}-e^{i\phi_t(s-1)/2}\right|
 \ll t^{-1/2-\sigma/2}U\log(2U).
\]
This is smaller than the asserted bound and proves the lemma.
\end{proof}

\begin{lemma}
\label{lem:coefAFE}
Let $b_n\in\mathbb R$, $b_1=1$, and $|b_n|\le1$ for $n\le U$.
Let $T\le t\le2T$, $0\le\sigma\le1/2$, and $U<T^{1/4}$.  With
\[
 \tau(t)=\sqrt{t/(2\pi)},
 \qquad
 \Lambda_U^0(s;t)=\sum_{\xi<\tau(t)}a_\xi\xi^{-s},
\]
Lavrik's approximate-functional-equation function satisfies
\begin{equation}\label{eq:coefAFE}
 \Lambda_U(s)=\Lambda_U^0(s;t)
 +O\!\left(t^{-\sigma/2}U\log^3(2T)\right),
\end{equation}
uniformly in the displayed range.  Consequently, if
$\beta_U(s)=\aleph(s)\Lambda_U(s)$ and $\delta=\Real s$, then
\begin{equation}\label{eq:betaAFE}
 \beta_U(\delta+it)=
 \left(\frac{t}{2\pi}\right)^{\delta/2-1/4}
 \omega_\delta(t)\Lambda_U^0(\delta+it;t)
 +O\!\left(t^{-1/4}U\log^3(2T)\right),
\end{equation}
where $\omega_\delta(t)$ is smooth and
\[
 |\omega_\delta(t)|=1+O(T^{-1}),
 \qquad \frac{d}{dt}|\omega_\delta(t)|^2\ll T^{-2}.
\]
\end{lemma}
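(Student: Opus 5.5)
The plan is to reduce to Lavrik's pointwise theorem with its ordinate-dependent phase, and then to pass from the smoothed sum to the sharp cutoff at $\tau(t)$ term by term, using the explicit expansion of $a_\xi$. First, by Lemma~\ref{lem:fixed-phase-comparison} I will replace $\Lambda_U(s)$ by $\widetilde\Lambda_{U,t}(s)$, which has phase $\rho(t)$, at a cost of $O(t^{-\sigma/2}U\log(2U))$. This is well inside \eqref{eq:coefAFE}. It therefore suffices to prove \eqref{eq:coefAFE} for $\widetilde\Lambda_{U,t}$. For that half, Lavrik's Theorem~1 applies in its stated range, with $\rho(t)$ chosen exactly as he requires.

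Second, I will treat the polar term. With $\phi_t=\pi/2-1/t$, one has $|(\pi\rho(t))^{(s-1)/2}|\asymp t^{0}\,e^{-\phi_t t/2}$ up to powers of $\pi$. Stirling gives $|\Gamma(s/2)|^{-1}\ll t^{1/2-\sigma/2}e^{\pi t/4}$, so the exponentials cancel up to $e^{1/2}$. Dividing by $|s-1|\asymp t$ and using $|M_U(0)M_U(1)|\ll U\log(2U)$ then gives $O(t^{-1/2-\sigma/2}U\log U)$.

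Third, I will handle the main sum. I will write $F(s,(\pi\rho(t))^{1/2}\xi)=\mathbf 1_{\xi<\tau(t)}+E(s,\xi)$ and bound the kernel error by the uniform incomplete-gamma estimate underlying Lavrik's theorem, namely
\[
 |E(s,\xi)|\,\xi^{-\sigma}\ll t^{-\sigma/2}\,
 \frac{\exp\{-c\xi^2/t\}}{1+|A-\pi\xi^2/A|},
 \qquad A\asymp\sqrt t .
\]
This combines saddle-point behaviour of $\Gamma(a,w)$ near $|w|\approx|a|$, a transition of unit width in $\xi-\tau(t)$ (the point $\xi=\tau(t)$ is where $\pi\xi^2=t/2=|a|$), and Gaussian decay beyond, which the phase $\rho(t)$ secures. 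I would take this estimate from Lavrik, or otherwise rederive it from \eqref{eq:Lavrik-F} by steepest descent. I will then insert \eqref{eq:alambda-explicit} and, for each fixed pair $\ell,m\le U$, sum over $n$ using Lemma~\ref{lem:transition-kernel} with $h\asymp\ell/m$ (after rescaling $\xi=n\ell/m$). This gives a factor $(1+m/\ell)\log(2T)$. Summing $|b_\ell b_m|m^{-1}(1+m/\ell)$ over $\ell,m\le U$ gives $\ll U\log(2U)$, so the total is $O(t^{-\sigma/2}U\log^2(2T))$. The frequencies at which the sharp cutoff and the transition disagree lie within $O(1)$ of $\tau(t)$, and they are absorbed by the same count, which comfortably fits the stated $\log^3$. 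The hypothesis $U<T^{1/4}$ ensures that $U\log^3$ times $t^{-\sigma/2}$ is the relevant scale and that the range of Lavrik's theorem is respected.

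Finally, for \eqref{eq:betaAFE}, I will multiply by $\aleph(\delta+it)$ and use Stirling: $\aleph(\delta+it)=(t/2\pi)^{\delta/2-1/4}\omega_\delta(t)$ with $\omega_\delta$ smooth. Its modulus is $1+O(1/t)$ with derivative $O(t^{-2})$, from the next term of the Stirling series. The error then becomes $t^{\delta/2-1/4}\cdot t^{-\delta/2}U\log^3=t^{-1/4}U\log^3(2T)$. I expect the main obstacle to be the third step, specifically the coefficient-uniform kernel bound for the incomplete gamma ratio in the transition zone at general $\sigma\in[0,\tfrac12]$. That is where the precise form of Lavrik's estimate has to be checked rather than quoted.
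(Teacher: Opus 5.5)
Your proposal is correct and follows the paper's route. You compare $\Lambda_U$ with the pointwise-phase half $\widetilde\Lambda_{U,t}$ via Lemma~\ref{lem:fixed-phase-comparison}, use Lavrik's Theorem~1 for the sharp cutoff at $\tau(t)$, bound the kernel-weighted error by expanding $a_\xi$ and applying Lemma~\ref{lem:transition-kernel} with $h=\ell/m$, and finish with Stirling for $\aleph$. The only difference from the paper is presentational: the paper quotes Lavrik's error terms in aggregated form, with polar term $O(|M_U(0)M_U(1)|t^{\sigma/2-1/2})$. You instead unpack the pointwise kernel bound and compute the polar term directly, which gives the slightly sharper $O(t^{-1/2-\sigma/2}U\log U)$.
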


\begin{proof}
At the single point \(s=\sigma+it\), apply Lavrik's Theorem~1
\cite[English pp.~130--131]{Lavrik} to
\(\widetilde\Lambda_{U,t}(s)\).  Its parameter \(\rho(t)\) has exactly the
phase required in that theorem.  This pointwise comparison is used only for
the approximate functional equation; the fixed analytic function
\(\Lambda_U\) remains the one used in the rectangle and Jensen arguments.
The theorem's hypotheses hold because
\(Y_U(s)=\zeta(s)M_U(s)M_U(1-s)\) has the stated Riemann-type functional
equation, its analytic half has only the inherited pole at \(s=1\), and
\eqref{eq:alambda-explicit} converges absolutely in the initial half-plane.
The theorem gives
\begin{align*}
 \widetilde\Lambda_{U,t}(s)={}&\sum_{\xi<\tau(t)}a_\xi\xi^{-s}
 +O\!\left(|M_U(0)M_U(1)|t^{\sigma/2-1/2}\right)\\
 &+O\!\left(t^{-\sigma/2}
 \sum_{\xi>0}|a_\xi|e^{-\xi^2/t}
 \left(1+\left|\sqrt t-\frac{\pi\xi^2}{\sqrt t}\right|\right)^{-1}
 \right).
\end{align*}
Lemma~\ref{lem:fixed-phase-comparison} changes the left side back to
\(\Lambda_U(s)\) at a cost \(O(t^{-\sigma/2}U\log(2U))\), which is absorbed
by the final error below.
The first error is $O(t^{-\sigma/2}U\log(2U))$, since
$|M_U(0)|\le U$, $|M_U(1)|\ll\log(2U)$, and $\sigma\le1/2$.
For the second, expand $a_\xi$ using \eqref{eq:alambda-explicit} and apply
Lemma~\ref{lem:transition-kernel} with $A=\sqrt t$ and $h=\ell/m$:
\begin{align*}
 &\sum_{\xi>0}|a_\xi|e^{-\xi^2/t}
 \left(1+\left|\sqrt t-\frac{\pi\xi^2}{\sqrt t}\right|\right)^{-1}\\
 &\quad\le
 \sum_{\ell,m\le U}\frac1m
 \sum_{n\ge1}
 \frac{e^{-(n\ell/m)^2/t}}
 {1+|\sqrt t-\pi(n\ell/m)^2/\sqrt t|}\\
 &\quad\ll \log(2T)
 \sum_{\ell,m\le U}\left(\frac1m+\frac1\ell\right)
 \ll U\log(2U)\log(2T).
\end{align*}
This is stronger than the asserted logarithmic bound.  Finally, Stirling's
formula gives
\[
 \aleph(\delta+it)=
 \left(\frac{t}{2\pi}\right)^{\delta/2-1/4}
 \omega_\delta(t),
 \qquad
 |\omega_\delta(t)|=1+O(T^{-1}),\quad
 \frac{d}{dt}|\omega_\delta(t)|^2\ll T^{-2}.
\]
The Stirling remainder multiplying the main polynomial is absorbed by the
same elementary coefficient summation used above.  This proves
\eqref{eq:betaAFE}.
\end{proof}

We use $\Delta_C\arg f$ for the net increment of a continuously chosen
argument along an oriented curve $C$.  It is not the total variation.

\begin{lemma}
\label{lem:coefvariation}
Let \(U\asymp T^\theta\) be prime, where \(0<\theta<1/4\) is fixed, and
suppose
\[
 b_1=1,\qquad |b_n|\le1\ (n\le U),\qquad b_U\ne0.
\]
Put
\[
 L=\log(2T),\qquad \eta_U=\log^+\frac1{|b_U|}.
\]
Assume in addition that \(\eta_U\ll L\), with an implied constant independent
of \(T\).  This condition holds for every coefficient family used below.
For fixed $0\le\sigma_1<\sigma_2\le3$ and $T\le t\le2T$,
\begin{equation}\label{eq:coefvariation}
 \left|\Delta_{[\sigma_1+it,\sigma_2+it]}\arg\Lambda_U\right|
 +\left|\Delta_{[\sigma_1+it,\sigma_2+it]}\arg\beta_U\right|
 \ll ZL\{L^2+\eta_U+1\}.
\end{equation}
\end{lemma}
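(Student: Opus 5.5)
We would prove \eqref{eq:coefvariation} by the standard Jensen-type count (Backlund's trick). Note that the $Z$ on the right is Zhuravlev's notation for the mollifier length, which we have written $U$. Since $\beta_U=\aleph\Lambda_U$ and $\arg\aleph$ varies by $O(\log T)$ along a horizontal segment of bounded length at height $\asymp T$ (Stirling), it suffices to treat $\Lambda_U$. Write $\Lambda_U(\sigma+it)$ along the segment and let $g(w)=\tfrac12\{\Lambda_U(w+it)+\overline{\Lambda_U(\bar w+it)}\}$, which is analytic because the coefficients are real and $\Lambda_U^-(\sigma-iu)=\overline{\Lambda_U^+(\sigma+iu)}$. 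On real $w$, $g$ equals $\Real\Lambda_U$. Between consecutive zeros of $\Real\Lambda_U$ on $[\sigma_1,\sigma_2]$ the argument moves by at most $\pi$. Hence $|\Delta\arg\Lambda_U|\le\pi(n+1)$, where $n$ is the number of zeros of $g$ on the real segment. The same device applies to $\Imag\Lambda_U$ if needed; we only need one of the two.

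To bound $n$ we will use Jensen's formula in a disk centred at a point $w_0$ far to the right, say $w_0=4$ or a little larger. The disk has radius $r$, chosen so that the concentric disk of radius $r'>r$ contains $[\sigma_1,\sigma_2]$ and stays in a region where Appendix~\ref{app:Lavrik-horizontal} supplies bounds. This gives
\[
 n\ll \log\max_{|w-w_0|=r'}|g(w)|-\log|g(w_0)|.
\]
For the upper bound we will use the larger-disk estimates from Appendix~\ref{app:Lavrik-horizontal}, derived from \eqref{eq:Lavrik-F}. These give $|\Lambda_U(w+it)|\le T^{O(1)}U^{O(1)}$ on the disk, with constants depending only on the disk. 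The reason is that $F$ is bounded by a power of $t$ times a Gaussian factor $e^{-c\xi^2/t}$, the frequency sum $\sum|a_\xi|\xi^{-\sigma}e^{-c\xi^2/t}$ is at most $T^{O(1)}U^2$ as in Lemma~\ref{lem:fixed-phase-comparison}, and the polar term is controlled by Stirling. Taking logarithms yields $O(L)$. The large power $ZL\cdot L^2$ in the statement suggests that the author's bound is cruder, losing factors of $U$ and extra logarithms from $R_b(U)$ in \eqref{eq:Rb}. We will simply aim for any bound of this shape.

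The main obstacle is the lower bound for $|g(w_0)|$ at the centre, which must hold uniformly in the coefficients. At $\Real w_0>1$ the Dirichlet series for $Y_U$ converges, and the approximate functional equation makes $\Lambda_U$ close to $Y_U(w_0+it)=\zeta\,M_U(w_0+it)\,M_U(1-w_0-it)$. The factor $M_U(1-w_0-it)=\sum b_n n^{w_0-1+it}$ is dominated by large $n$ and may be small; this is where $b_U$ and the primality of $U$ enter. We will use the fact that $U$ is prime, so $\xi=U$ (taking $\ell=U$, $m=1$ in \eqref{eq:alambda-explicit}) is the extreme frequency isolated in the expansion. To isolate it, we would first try moving the centre far to the left, to $\Real w_0=-K$ with $K$ a large fixed constant. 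There the term $b_U U^{K}$ of $M_U(1-w)$ dominates the remaining terms up to a ratio we can control. In that region the analytic half (not $Y_U$) is governed by the reflected part, and we must verify this from \eqref{eq:Lavrik-F}.

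If that is too delicate, the fallback is to average over several centres, choosing one at which $|\Lambda_U|\ge |b_U|\,T^{-O(1)}U^{-O(1)}$. This gives $-\log|g(w_0)|\ll L^2+\eta_U+L$ times the number of disks. The $\eta_U$ term comes from $\log(1/|b_U|)$, and the extra logarithms come from the subsidiary errors in \eqref{eq:coefAFE}, relative to which $b_U U^{K}$ must dominate. Combining the two Jensen bounds, multiplying by the $O(ZL)$ factor lost in covering the segment and in the Gaussian tails, and adding the $O(L)$ contribution from $\arg\aleph$ gives \eqref{eq:coefvariation}.
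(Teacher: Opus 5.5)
You have the right frame: reducing to $\Lambda_U$, reading zeros of $\Real\Lambda_U$ (your $g$ is exactly the paper's $G_+$), and counting them by Jensen. But the proposal never supplies the step that carries the lemma, namely a lower bound at the Jensen centre that is uniform in the coefficients, and the concrete attempt fails.

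At $\Real w=-K$ with $K$ fixed, the term $b_UU^K$ does not dominate $M_U(-K)=\sum_{n\le U}b_nn^K$. (It is $M_U(w)$, not $M_U(1-w)$, that is governed by large $n$ there.) Consecutive terms differ only by factors $((U-1)/U)^K=1-O(K/U)$. The remaining terms, of total absolute size up to $U^{K+1}/(K+1)$, can therefore swamp or cancel $b_UU^K$ for adversarial real $b_n$.

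There are further problems with that attempt. On the left, $\Lambda_U$ is not approximated by $Y_U$ at all. Also, $\xi=U$ is neither extreme nor isolated among the frequencies $n\ell/m$. Your fallback of ``averaging over several centres'' asserts that a good centre exists without giving any mechanism, and that existence is the whole difficulty.

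The missing idea is that the isolated frequency is the \emph{smallest} one, $\xi=1/U$. It arises only from $(n,\ell,m)=(1,1,U)$, with coefficient $b_U/U$, while every other frequency in $(0,1]$ is at least $1/(U-1)$. It dominates only far to the right. At $s_0=\sigma_0+it$ its advantage is $(U/(U-1))^{\sigma_0}\ge e^{\sigma_0/U}$. This must beat the total coefficient mass $\ll U^3$, the factor $1/|b_U|\le e^{\eta_U}$, and the $\xi>1$ and polar contributions. That requires $\sigma_0/U\gg L+\eta_U$, and the paper takes $\sigma_0=AU\{L^2+\eta_U+1\}$.

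Consequently Jensen must be applied between radii $\sigma_0$ and $2\sigma_0$. On that disk you need the growing-disk bound $\exp\{C(|\sigma|+U)L\}$ of \eqref{eq:frequency-disk}, which is available because $\sigma_0=o(T^{1/2})$. A $T^{O(1)}U^{O(1)}$ bound on a fixed disk is not enough.

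The zero count is then $\ll\sigma_0L=UL\{L^2+\eta_U+1\}$. So the factor $U$ (the statement's $Z$) comes from the distance of the centre, not from covering the segment or from Gaussian tails.

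Finally, even at this centre $\Lambda_U^+(s_0)\approx b_UU^{s_0-1}$ carries the phase $U^{it}$, so $\Real\Lambda_U^+(s_0)$ may vanish. You cannot fix the real part in advance. You must take whichever of $G_+,G_-$ has modulus at least $|\Lambda_U^+(s_0)|/\sqrt2$ at $s_0$, as the paper does.
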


\begin{proof}
Use the paired analytic halves in \eqref{eq:Lavrik-half-explicit}.  For
\(s=\sigma+it\),
\(\Lambda_U^-(\sigma-it)=\overline{\Lambda_U^+(\sigma+it)}\).  Hence the
zeros of the real and imaginary parts of \(\Lambda_U^+(\sigma+it)\) on the
horizontal segment are respectively the zeros there of
\[
 G_+(s)=\frac{\Lambda_U^+(s)+\Lambda_U^-(s-2it)}2,
 \qquad
 G_-(s)=\frac{\Lambda_U^+(s)-\Lambda_U^-(s-2it)}{2i}.
\]
Choose
\[
 \sigma_0=A U\{L^2+\eta_U+1\}+\sigma_1+\sigma_2,
 \qquad s_0=\sigma_0+it,
\]
where \(A\) is an absolute sufficiently large constant.
Proposition~\ref{prop:Lavrik-Jensen-bounds}, proved from the explicit
incomplete-gamma kernel in Appendix~\ref{app:Lavrik-horizontal}, gives
\[
 \max_{|s-s_0|\le2\sigma_0}\log^+
 \left|(s-1)(s-1-2it)G_\pm(s)\right|
 \ll ZL\{L^2+\eta_U+1\}.
\]
The same proposition shows that at least one choice of sign satisfies
\[
 -\log\left|(s_0-1)(s_0-1-2it)G_\pm(s_0)\right|
 \ll ZL\{L^2+\eta_U+1\}.
\]
The two linear factors remove the pole of \(\Lambda_U^+(s)\) at \(s=1\)
and that of \(\Lambda_U^-(s-2it)\) at \(s=1+2it\).
Jensen's theorem between the concentric radii \(\sigma_0\) and
\(2\sigma_0\) therefore bounds the number of zeros of the chosen \(G_\pm\)
in the inner disk by \(O(ZL\{L^2+\eta_U+1\})\).  The horizontal segment lies
in that disk.  Between two consecutive zeros of the chosen real or imaginary
part, a continuous argument of \(\Lambda_U^+\) changes by less than \(\pi\).
This proves the first term in \eqref{eq:coefvariation}.  Finally,
\(\beta_U=\aleph\Lambda_U^+\), and uniform Stirling gives a horizontal
argument increment \(O(\log T)\) for \(\aleph\); the second term follows.
\end{proof}

For $0<\alpha\le1$, define
\[
 S_b(\alpha,U)=\sum_{d\le U^2}\Phi_\alpha(d)
 \left(\sum_{\substack{\ell,m\le U\\d\mid\ell m}}
 \frac{b_\ell b_m}{\ell m}\right)^2,
 \qquad
 \Phi_\alpha(d)=d^\alpha\prod_{p\mid d}(1-p^{-\alpha}).
\]
Thus $S_b(1,U)$ is \eqref{eq:Sb}.

\begin{lemma}
\label{lem:truncated-diagonal}
Let $1/4\le\delta<1/2$, put $\alpha=2\delta$ and $x=1-\alpha$.
Uniformly for $X\ge2$,
\begin{align}\label{eq:truncated-diagonal}
 \sum_{\xi\le X}\frac{a_\xi^2}{\xi^{\alpha}}
 ={}&\frac{X^x}{x}S_b(1,U)
 +\zeta(\alpha)Q_b(\alpha,U)
 +O\!\left(X^{-\alpha}R_b(U)\right),
\end{align}
where
\begin{equation}\label{eq:Qb}
 Q_b(\alpha,U)=
 \sum_{d\le U^2}\Phi_\alpha(d)
 \left(
  \sum_{\substack{\ell,m\le U\\d\mid\ell m}}
  \frac{b_\ell b_m}{\ell^\alpha m}
 \right)^2\ge0.
\end{equation}
Since $\zeta(\alpha)<0$ for $0<\alpha<1$, it follows that
\begin{equation}\label{eq:truncated-diagonal-upper}
 \sum_{\xi\le X}\frac{a_\xi^2}{\xi^{2\delta}}
 \le\frac{X^x}{x}S_b(1,U)
 +O\!\left(X^{-2\delta}R_b(U)\right).
\end{equation}
\end{lemma}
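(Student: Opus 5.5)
The plan is to open the square $a_\xi^2$ and turn the sum over $\xi$ into a lattice-point sum on each pair of index quadruples. By \eqref{eq:alambda-explicit},
\[
 \sum_{\xi\le X}\frac{a_\xi^2}{\xi^\alpha}
 =\sum_{\ell_1,m_1,\ell_2,m_2\le U}\frac{b_{\ell_1}b_{m_1}b_{\ell_2}b_{m_2}}{m_1m_2}
 \sum_{\substack{n_1,n_2\ge1\\ n_1\ell_1/m_1=n_2\ell_2/m_2\le X}}
 \Bigl(\frac{n_1\ell_1}{m_1}\Bigr)^{-\alpha}.
\]
For fixed $(\ell_1,m_1,\ell_2,m_2)$, put $A=\ell_1m_2$, $B=\ell_2m_1$ and $g=(A,B)$. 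The equation $n_1A=n_2B$ means $n_1=kB/g$ and $n_2=kA/g$ for $k\ge1$, and then $\xi=k\,\ell_1\ell_2/g$. The inner sum is therefore $(\ell_1\ell_2/g)^{-\alpha}\sum_{k\le Xg/(\ell_1\ell_2)}k^{-\alpha}$.

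Next apply the uniform estimate $\sum_{k\le Y}k^{-\alpha}=Y^{1-\alpha}/(1-\alpha)+\zeta(\alpha)+O(Y^{-\alpha})$, valid for $Y\ge1$ and $\alpha\in[1/2,1)$. When $Y<1$ the sum is empty and the right-hand side is $O(Y^{-\alpha})$ anyway, because $\zeta(\alpha)$ is bounded for $\alpha\le1-c$; for $\alpha$ close to $1$ I will instead keep $Y^x/x+\zeta(\alpha)$ together and use that it is $O(Y^{-\alpha})$-bounded uniformly on $Y\le1$. The main term for each quadruple is $X^x g/(x\,\ell_1\ell_2)$, since $(\ell_1\ell_2/g)^{-\alpha}(Xg/\ell_1\ell_2)^{1-\alpha}=X^x g/(\ell_1\ell_2)$. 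The constant term is $\zeta(\alpha)(g/\ell_1\ell_2)^\alpha$. The error term is $X^{-\alpha}g^{\alpha-\alpha}\cdot\ldots=O(X^{-\alpha})$ per quadruple, and summing $|b\cdots b|/(m_1m_2)$ over quadruples gives exactly $X^{-\alpha}R_b(U)$ from \eqref{eq:Rb}.

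It then remains to identify the two coefficient sums with $S_b(1,U)$ and $Q_b(\alpha,U)$. The tool is the gcd identity $g^\beta=\sum_{d\mid g}\Phi_\beta(d)$ (Jordan's totient $J_\beta=\mathrm{id}^\beta*\mu$). Since $d\mid(\ell_1m_2,\ell_2m_1)$ does not factor into separate conditions on the pairs $(\ell_1,m_1)$ and $(\ell_2,m_2)$, I will first rewrite it. Note that $g/(\ell_1\ell_2)=1/\mathrm{lcm}$-type quantities: $\ell_1\ell_2/g=\mathrm{lcm}(\ell_1 m_2,\ell_2 m_1)/(m_1m_2)$. The cleaner approach is therefore to parametrise frequencies as $\xi=n\ell/m$ and compute $\sum a_\xi^2\xi^{-\alpha}$ with $\xi=r$ in lowest terms. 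Alternatively, the divisibility $d\mid \ell m$ in \eqref{eq:Sb} suggests the substitution $\xi=n\ell/m$ becoming $(n\ell)/m$. I expect the correct bookkeeping to produce, after writing $\sum_{d\mid g}\Phi_1(d)$ and regrouping, $\sum_d\Phi_1(d)\bigl(\sum_{d\mid \ell m}b_\ell b_m/(\ell m)\bigr)^2$. The same computation with exponent $\alpha$ should give the weights $\ell^{-\alpha}m^{-1}$ in \eqref{eq:Qb}, with the $m^{-1}$ coming from the prefactor $1/(m_1m_2)$.

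This identification is the main obstacle. I will first check it on quadruples with all indices pairwise coprime, then verify the multiplicative (prime-by-prime) identity of the two quartic forms, if necessary by an Euler-factor comparison at a single prime $p$. Nonnegativity of $Q_b$ is immediate because $\Phi_\alpha(d)\ge0$ and the expression is a sum of squares. The upper bound \eqref{eq:truncated-diagonal-upper} then follows by dropping $\zeta(\alpha)Q_b\le0$.
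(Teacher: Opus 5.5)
Your first half matches the paper's proof. The parametrisation $n_1=kB/g$, $n_2=kA/g$, $\xi=kq$ with $q=\ell_1\ell_2/g$ is right, and so is the uniform Euler summation, including the empty range $Y<1$. The per-quadruple main term $X^xg/(x\ell_1\ell_2)$ is correct, as is the per-quadruple error $O(X^{-\alpha})$, which sums to exactly $X^{-\alpha}R_b(U)$.

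\textbf{The gap.} It is the step you call the main obstacle: showing that
\[
 \sum_{\ell_1,m_1,\ell_2,m_2\le U}\frac{b_{\ell_1}b_{m_1}b_{\ell_2}b_{m_2}}{\ell_1^\beta\ell_2^\beta m_1m_2}\,(\ell_1m_2,\ell_2m_1)^\beta
\]
equals $S_b(1,U)$ for $\beta=1$ and $Q_b(\alpha,U)$ for $\beta=\alpha$. You only state what you expect the bookkeeping to give. The verification route you propose cannot work, for two reasons.
\begin{enumerate}
 \item The $b_n$ are arbitrary real numbers, so there is no Euler product and no prime-by-prime comparison of the two quartic forms.
 \item The identity does not hold quadruple by quadruple. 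Checking the pairwise-coprime case (where both gcds equal $1$) or comparing kernels locally therefore proves nothing. For example, $(\ell_1,m_1,\ell_2,m_2)=(2,1,1,2)$ gives $(\ell_1m_2,\ell_2m_1)=1$ but $(\ell_1m_1,\ell_2m_2)=2$.
\end{enumerate}

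\textbf{The missing idea.} It is a relabelling, not a computation. The weight $b_{\ell_1}b_{m_1}b_{\ell_2}b_{m_2}/(\ell_1^\beta\ell_2^\beta m_1m_2)$ and the box $\ell_i,m_i\le U$ are both invariant under $m_1\leftrightarrow m_2$. So in the whole sum you may replace $(\ell_1m_2,\ell_2m_1)$ by $(\ell_1m_1,\ell_2m_2)$.

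After that swap, the condition $d\mid(\ell_1m_1,\ell_2m_2)$ does split into $d\mid\ell_1m_1$ and $d\mid\ell_2m_2$. This removes exactly the non-factorisation you were worried about. The identity $n^\beta=\sum_{d\mid n}\Phi_\beta(d)$ then turns the sum into
\[
 \sum_d\Phi_\beta(d)\Bigl(\sum_{\substack{\ell,m\le U\\ d\mid\ell m}}\frac{b_\ell b_m}{\ell^\beta m}\Bigr)^2 .
\]
This is $S_b(1,U)$ when $\beta=1$ (since $\Phi_1=\varphi$) and $Q_b(\alpha,U)$ when $\beta=\alpha$. It also gives $Q_b\ge0$ directly.

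With this one line inserted, your argument becomes the paper's proof.
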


\begin{proof}
Expand the square using \eqref{eq:alambda-explicit}.  The equality
$n_1\ell_1/m_1=n_2\ell_2/m_2$ is equivalent to
$n_1A=n_2B$, where $A=\ell_1m_2$ and $B=\ell_2m_1$.
Writing $g=(A,B)$, $A=ga$, $B=gb$, one has
$n_1=bk$, $n_2=ak$, and the common frequency is
$kq$, where $q=\ell_1\ell_2/g$.  Euler summation, uniformly for
$1/2\le\alpha<1$ and $Y>0$ (with an empty sum when $Y<1$), gives
\[
 \sum_{k\le Y}k^{-\alpha}
 =\frac{Y^{1-\alpha}}{1-\alpha}+\zeta(\alpha)+O(Y^{-\alpha}).
\]
After multiplication by $q^{-\alpha}$ and summation over
$\ell_i,m_i$, the coefficient of $X^x/x$ is
\[
 \sum_{\ell_1,m_1,\ell_2,m_2\le U}
 \frac{b_{\ell_1}b_{m_1}b_{\ell_2}b_{m_2}}
      {\ell_1m_1\ell_2m_2}
 (\ell_1m_2,\ell_2m_1).
\]
Interchanging $m_1$ and $m_2$ and using
$\sum_{d\mid n}\varphi(d)=n$ identifies this with $S_b(1,U)$.
The coefficient of $\zeta(\alpha)$ is
\[
 \sum_{\ell_1,m_1,\ell_2,m_2\le U}
 \frac{b_{\ell_1}b_{m_1}b_{\ell_2}b_{m_2}
       (\ell_1m_2,\ell_2m_1)^\alpha}
      {\ell_1^\alpha\ell_2^\alpha m_1m_2}.
\]
After the same interchange of $m_1,m_2$ and the identity
$\sum_{d\mid n}\Phi_\alpha(d)=n^\alpha$, this is exactly
$Q_b(\alpha,U)$ in \eqref{eq:Qb}.  In particular it is non-negative.
The Euler-summation remainder contributes
$O(X^{-\alpha}R_b(U))$ after absolute values are taken.  Finally,
$\zeta(\alpha)<0$ on $(0,1)$, which proves
\eqref{eq:truncated-diagonal-upper}.
\end{proof}

\begin{remark}
The secondary quadratic form is necessarily the asymmetric form
$Q_b(\alpha,U)$ in \eqref{eq:Qb}, not the symmetric expression obtained by
formally replacing $1$ by $\alpha$ in $S_b(1,U)$.  This is forced by the
factor $1/m$ in the generalised Dirichlet coefficient $a_\xi$.  The translated
source writes the secondary term in a symmetric notation; the direct
expansion above removes that ambiguity.  For the application only
$Q_b(\alpha,U)\ge0$ and $\zeta(\alpha)<0$ are needed.
\end{remark}

\begin{lemma}
\label{lem:frequency-spacing}
Let $\mathcal F(X,U)$ be the set of those frequencies $\xi<X$ that occur
in \eqref{eq:alambda-explicit}, each counted once.  Then
\[
 \#\mathcal F(X,U)\ll XZ^2,
 \qquad
 \min_{\substack{\xi,\eta\in\mathcal F(X,U)\\\xi\ne\eta}}
 |\log\xi-\log\eta|\gg\frac1{XZ^2}.
\]
Consequently, for arbitrary complex numbers $c_\xi$ and arbitrary intervals
$I_{\xi,\eta}\subseteq[T,2T]$,
\begin{equation}\label{eq:spacing-schur}
 \left|\sum_{\xi\ne\eta}c_\xi\overline{c_\eta}
 \int_{I_{\xi,\eta}}t^{\delta-1/2}
 e^{it(\log\eta-\log\xi)}\,dt\right|
 \ll T^{\delta-1/2}XZ^2\log(2XZ)
 \sum_\xi|c_\xi|^2.
\end{equation}
\end{lemma}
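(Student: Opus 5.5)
The frequencies occurring in \eqref{eq:alambda-explicit} have the form \(\xi=n\ell/m\) with \(n\ge1\) and \(\ell,m\le U\). Here \(Z\) denotes the mollifier length \(U\), following Zhuravlev's notation, which survives in the statement. The whole argument rests on writing every such \(\xi\) over a common denominator.

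\emph{Counting.} Every admissible \(\xi\) lies in \((1/m)\mathbb{Z}_{>0}\) for some \(m\le U\). Hence \(\xi\in(1/L_U)\mathbb{Z}\) with \(L_U=\operatorname{lcm}(1,\dots,U)\), but that denominator is far too large. A cheaper count is to fix \(m\le U\): then \(n\ell<Xm\), and the values of \(n\ell\) are just positive integers below \(Xm\). This gives at most \(Xm\) frequencies for each \(m\), and summing over \(m\le U\) gives \(\#\mathcal F(X,U)\le\sum_{m\le U}Xm\ll XU^2\).

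\emph{Spacing.} Take two distinct frequencies \(\xi=a/m\) and \(\eta=a'/m'\) with \(m,m'\le U\). Then
\[
|\xi-\eta|=\frac{|am'-a'm|}{mm'}\ge\frac1{mm'}\ge\frac1{U^2}.
\]
Since both lie below \(X\), the mean value theorem for \(\log\) gives \(|\log\xi-\log\eta|\ge|\xi-\eta|/X\ge1/(XU^2)\). This is the claimed separation \(\gg1/(XZ^2)\).

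\emph{The bilinear bound \eqref{eq:spacing-schur}.} For a single pair, write \(\lambda=\log\eta-\log\xi\neq0\). Integrating by parts once (the second mean value theorem), with the monotone weight \(t^{\delta-1/2}\le T^{\delta-1/2}\) on \([T,2T]\), gives
\[
\left|\int_{I_{\xi,\eta}}t^{\delta-1/2}e^{it\lambda}\,dt\right|\ll\frac{T^{\delta-1/2}}{|\lambda|},
\]
uniformly over all subintervals \(I_{\xi,\eta}\). This uniformity is the reason the arbitrary intervals cause no difficulty.

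The form is then at most \(T^{\delta-1/2}\sum_{\xi\ne\eta}|c_\xi||c_\eta|/|\log\xi-\log\eta|\). Apply Schur's test, via \(|c_\xi c_\eta|\le\frac12(|c_\xi|^2+|c_\eta|^2)\). This reduces the problem to bounding, for each fixed \(\xi\), the row sum \(\sum_{\eta\ne\xi}|\log\xi-\log\eta|^{-1}\).

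List the elements of \(\mathcal F(X,U)\) in increasing order of \(\log\). By the spacing bound, the \(j\)-th neighbour of \(\xi\) on either side satisfies \(|\log\xi-\log\eta|\ge j/(XU^2)\). The number of neighbours is at most \(\#\mathcal F\ll XU^2\). Hence the row sum is
\[
\ll XU^2\sum_{j\le CXU^2}\frac1j\ll XU^2\log(2XU).
\]
Multiplying by \(T^{\delta-1/2}\) gives \eqref{eq:spacing-schur}.

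\emph{Main obstacle.} The delicate step is the harmonic-sum count. It requires only the uniform minimum spacing together with the cardinality bound, not any finer distribution of the frequencies. The same-\(m\) count must be set up so that frequencies repeated across different \(m\) are not double counted. Since \(\mathcal F\) is a set and we only need an upper bound, overcounting is harmless. The remaining points are routine: that the integration-by-parts bound holds for arbitrary subintervals, and that \(\delta\le\frac12\) keeps \(t^{\delta-1/2}\) monotone.
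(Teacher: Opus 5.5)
Your proposal is correct and follows essentially the same route as the paper. Both arguments use the same four steps: a count using denominators at most $U$ (you fix $m$ rather than the reduced denominator, which makes no difference), the separation $1/(mm')\ge U^{-2}$ converted to logarithmic spacing $\gg 1/(XU^2)$, one integration by parts uniform in the subinterval, and a Schur row-sum bound from the $k$-th-neighbour harmonic sum. As in the paper, your last step replaces $\log(2\#\mathcal F(X,U))\ll\log(2XU^2)$ by $\log(2XU)$, which tacitly needs $X\ge1$ (say); this holds in the application $X=\sqrt{T/\pi}$.
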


\begin{proof}
Write a frequency in lowest terms as $p/q$.  Since it arises as $n\ell/m$,
its denominator divides $m$, so $q\le U$; and $p<Xq$.  This proves the
cardinality bound.  For two distinct reduced fractions $p/q,r/s$,
\[
 \left|\frac pq-\frac rs\right|\ge\frac1{qs}\ge\frac1{U^2}.
\]
As both are at most $X$,
$|\log(p/q)-\log(r/s)|\ge X^{-1}|p/q-r/s|$.

Order the logarithmic frequencies increasingly and denote their minimum
spacing by $\Delta$.  Integration by parts gives
\[
 \left|\int_I t^{\delta-1/2}e^{it(u-v)}\,dt\right|
 \ll T^{\delta-1/2}|u-v|^{-1}.
\]
For a fixed frequency, the $k$th neighbour on either side is at distance at
least $k\Delta$.  Hence every row sum of the absolute kernel is
$O(T^{\delta-1/2}\Delta^{-1}\log(2\#\mathcal F))$.
The Schur test (or $2|zw|\le|z|^2+|w|^2$) proves
\eqref{eq:spacing-schur}, since $\Delta^{-1}\ll XZ^2$.
\end{proof}

\begin{proposition}
\label{prop:Zhflex}
Let the mollifier coefficients satisfy
\[
 b_1=1,\qquad |b_n|\le1\ (n\le U),\qquad b_U\ne0,
\]
where $U\asymp T^\theta$ is prime, $0<\theta<1/4$, and
\[
 \log^+\frac1{|b_U|}\ll\log(2T).
\]
In particular,
\begin{equation}\label{eq:terminal-growth-condition}
 U\log(2T)
 \left\{\log^2(2T)+\log^+\frac1{|b_U|}+1\right\}=o(T).
\end{equation}
Put
$x=1-2\delta\asymp1/\log T$.  If
\begin{equation}\label{eq:Sb-gamma}
 S_b(1,U)\le\frac{\gamma+o(1)}{\log U},
\end{equation}
then
\begin{equation}\label{eq:beta-second-moment}
 \frac1T\int_T^{2T}|\beta_U(\delta+it)|^2\,dt
 \le\frac{\gamma}{x\log U}+o(1).
\end{equation}
The horizontal argument increments required in
Lemma~\ref{lem:rectangle-sign} are $o(T)$.
\end{proposition}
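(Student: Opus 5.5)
The plan is to combine the approximate functional equation of Lemma~\ref{lem:coefAFE} with the truncated diagonal of Lemma~\ref{lem:truncated-diagonal} and the spacing estimate of Lemma~\ref{lem:frequency-spacing}. The growth condition \eqref{eq:terminal-growth-condition} is immediate: $U\ll T^\theta$ with $\theta<1/4$, and the bracket is $O(\log^2 T)$ by hypothesis. The horizontal increments then follow from Lemma~\ref{lem:coefvariation}. Each horizontal segment in Lemma~\ref{lem:rectangle-sign}, and in the auxiliary rectangle up to $\Real s=3$, has net increment $O(UL^3)$. Integrating over $\sigma$ or summing the finitely many edges keeps this $O(UL^3)=o(T)$. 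So the substance is the mean square \eqref{eq:beta-second-moment}.

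By \eqref{eq:betaAFE}, write $\beta_U(\delta+it)=P(t)+E(t)$ with
\[
 P(t)=\Bigl(\tfrac{t}{2\pi}\Bigr)^{\delta/2-1/4}\omega_\delta(t)\Lambda_U^0(\delta+it;t),
 \qquad
 E(t)\ll T^{-1/4}UL^3 .
\]
Then $T^{-1}\int|E|^2\ll T^{-1/2}U^2L^6=o(1)$, because $2\theta<1/2$. By Cauchy--Schwarz it suffices to show $T^{-1}\int_T^{2T}|P|^2\le\gamma/(x\log U)+o(1)$. The cross term is at most $2(\int|P|^2\int|E|^2)^{1/2}$, which is $o(T)$ provided $\int|P|^2\ll T/x$; that holds because $1/x\asymp\log T$ and $S_b\ll1/\log U$.

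Expanding $|P|^2$ gives
\[
 \Bigl(\tfrac{t}{2\pi}\Bigr)^{\delta-1/2}|\omega_\delta|^2
 \sum_{\xi,\eta<\tau(t)}a_\xi a_\eta\xi^{-\delta}\eta^{-\delta}(\eta/\xi)^{it}.
\]
\textbf{Diagonal.} We have $|\omega_\delta|^2=1+O(T^{-1})$, and the condition $\xi<\tau(t)$ is equivalent to $t>2\pi\xi^2$. Interchanging sum and integral, the diagonal equals
\[
 \int_T^{2T}\Bigl(\tfrac{t}{2\pi}\Bigr)^{-x/2}\sum_{\xi<\tau(t)}a_\xi^2\xi^{-2\delta}\,dt .
\]
Apply \eqref{eq:truncated-diagonal-upper} with $X=\tau(t)$. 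The main term is $\tau(t)^{x}(t/2\pi)^{-x/2}S_b/x=S_b(1,U)/x$ exactly, since $\tau^x=(t/2\pi)^{x/2}$; this cancellation is the point of Lavrik's normalisation. Integrating over $[T,2T]$ gives $T S_b/x\le T(\gamma+o(1))/(x\log U)$. The remainder is $O(T^{-\delta}R_b)=O(T^{-\delta}U^2L^2)=o(1)$ per unit $t$, since $\delta\to1/2$ and $2\theta<1/2$.

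\textbf{Off-diagonal.} For fixed $\xi\ne\eta$, the $t$-range is the interval $I_{\xi,\eta}=[\max(T,2\pi\xi^2,2\pi\eta^2),2T]$. The smooth factor $|\omega_\delta|^2$ is removed by partial integration using $\frac{d}{dt}|\omega_\delta|^2\ll T^{-2}$. Then \eqref{eq:spacing-schur} applies with $X=\tau(2T)\asymp\sqrt T$ and $c_\xi=a_\xi\xi^{-\delta}$, where I read $Z$ as the mollifier length $U$. The resulting bound is
\[
 T^{\delta-1/2}\sqrt T\,U^2L\sum_{\xi<\sqrt T}a_\xi^2\xi^{-2\delta}.
\]
By Lemma~\ref{lem:truncated-diagonal}, the last sum is at most $\sqrt T^{\,x}S_b/x+O(R_b)\ll L+U^2L^2$. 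Since $T^{\delta-1/2}=T^{-x/2}\asymp1$, the off-diagonal is $O(\sqrt T\,U^4L^3)$, which is $o(T)$ exactly because $\theta<1/4$. Dividing by $T$ then yields \eqref{eq:beta-second-moment}.

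The main obstacle is the off-diagonal. The spacing bound is crude, giving minimum spacing $\gg1/(XU^2)$, and it costs $U^4$ against $\sqrt T$; this is where $\theta<1/4$ is used in full. One also has to handle carefully the $t$-dependent truncation, which produces the intervals $I_{\xi,\eta}$, and the non-constant weight $\omega_\delta$, so that the Schur test in Lemma~\ref{lem:frequency-spacing} applies verbatim.
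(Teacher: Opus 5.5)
Your route is the paper's: the approximate functional equation \eqref{eq:betaAFE}, the truncated diagonal at $X=\tau(t)$ with the exact cancellation $\tau(t)^x(t/2\pi)^{-x/2}=1$, the spacing/Schur bound of Lemma~\ref{lem:frequency-spacing} with $c_\xi=a_\xi\xi^{-\delta}$ and $X\asymp\sqrt T$, and Lemma~\ref{lem:coefvariation} for the increments.

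The off-diagonal step, however, fails as written. You bound $\sum_{\xi<\sqrt T}a_\xi^2\xi^{-2\delta}$ by $\sqrt T^{\,x}S_b/x+O(R_b)\ll L+U^2L^2$. This drops the factor $X^{-2\delta}$ from the remainder in \eqref{eq:truncated-diagonal-upper}. The resulting off-diagonal bound is $\sqrt T\,U^4L^3=T^{1/2+4\theta+o(1)}$, which is $o(T)$ only for $\theta<1/8$, not for $\theta<1/4$. So your closing claim that this is $o(T)$ ``exactly because $\theta<1/4$'' is false. Your remark that the spacing ``costs $U^4$ against $\sqrt T$'' is also wrong. Since the off-diagonal is part of $\int|P|^2$, the bound $\int|P|^2\ll T/x$ that your cross-term argument relies on goes with it.

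The repair is immediate, and it is what the paper does. At $X\asymp\sqrt T$ the remainder is $O(X^{-2\delta}R_b)=O(T^{-\delta}U^2L^2)=o(1)$, which is the same estimate you already used in the diagonal step. The main term satisfies $X^xS_b/x\ll\log T/\log U\ll1$. Hence $\sum_{\xi<\sqrt T}a_\xi^2\xi^{-2\delta}\ll1$. The off-diagonal is then $O(\sqrt T\,U^2L)$, that is $O(T^{-1/2}U^2L)=o(1)$ after division by $T$.

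This is where $\theta<1/4$ is genuinely used: the spacing loss is only $U^2$ (from $\Delta^{-1}\ll XU^2$), set against $\sqrt T$. With this correction the rest of your argument goes through.
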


\begin{proof}
Write $X_1=\sqrt{T/\pi}$, so every frequency occurring in the moving
truncation $\xi<\tau(t)$ lies below $X_1$.  The diagonal contribution of the truncated polynomial is
\begin{align*}
 D={}&\frac1T\int_T^{2T}
  \left(\frac{t}{2\pi}\right)^{\delta-1/2}
  \sum_{\xi<\sqrt{t/(2\pi)}}
  \frac{a_\xi^2}{\xi^{2\delta}}\,dt.
\end{align*}
Since $x=1-2\delta$ and
$X=\sqrt{t/(2\pi)}$, Lemma~\ref{lem:truncated-diagonal} gives
\[
 \left(\frac{t}{2\pi}\right)^{\delta-1/2}
 \sum_{\xi<X}\frac{a_\xi^2}{\xi^{2\delta}}
 \le \frac{S_b(1,U)}x
 +O\left(t^{-1/2}R_b(U)\right).
\]
Consequently
\[
 D\le\frac{S_b(1,U)}x+O(T^{-1/2}R_b(U))
 =\frac{S_b(1,U)}x+o(1).
\]
Moreover, \eqref{eq:truncated-diagonal-upper}, \eqref{eq:Sb-gamma}, and
\eqref{eq:Rb-bound} show that
\begin{equation}\label{eq:cxi-L2}
 \sum_{\xi<X_1}\frac{|a_\xi|^2}{\xi^{2\delta}}\ll1.
\end{equation}

On expanding the square of the truncated polynomial, a pair $\xi\ne\eta$
is integrated over the interval
\[
 I_{\xi,\eta}=
 [\max(T,2\pi\xi^2,2\pi\eta^2),2T],
\]
with the interval understood to be empty if its left endpoint exceeds $2T$.
The common factor $|\omega_\delta(t)|^2=1+O(T^{-1})$ has derivative
$O(T^{-2})$ and is absorbed in the same integration-by-parts estimate.  Apply
Lemma~\ref{lem:frequency-spacing} with
$c_\xi=a_\xi\xi^{-\delta}$ and $X=X_1$.  After division by $T$, the complete
off-diagonal is
\[
 \ll T^{-1}T^{\delta-1/2}X_1U^2\log(2TZ)
 \sum_{\xi<X_1}\frac{|a_\xi|^2}{\xi^{2\delta}}
 \ll T^{-1/2}U^2\log(2T)=o(1).
\]
This proves the mean square for the main truncated polynomial.

Lemma~\ref{lem:coefAFE} gives a uniform remainder
$E_U(t)\ll T^{-1/4}U\log^3T=o(1)$.  Using
$|u+v|^2\le(1+\rho)|u|^2+(1+\rho^{-1})|v|^2$ with
$\rho\downarrow0$ sufficiently slowly shows that the remainder and the cross
term contribute $o(1)$ to the normalised mean square.  This proves
\eqref{eq:beta-second-moment}.  Finally Lemma~\ref{lem:coefvariation} and
\eqref{eq:terminal-growth-condition} give \(o(T)\) for each horizontal
boundary increment; the standard bounds for \(\zeta\) and \(M_U\) on the
auxiliary rectangle are smaller.
\end{proof}

For the reciprocal-square-root coefficients,
\[
 |\lambda(n)|\le1,
 \qquad \lambda(U)=-\frac12
\]
when $U$ is prime.  Proposition~\ref{prop:Zhflex} therefore applies directly,
and Theorem~\ref{thm:transfer} supplies
$\gamma=\mathcal C_{\mathrm{res}}$.

Throughout the critical-line application we use the source normalisation
\eqref{eq:intro-c2}.  The mean-square proposition concerns the
same unscaled analytic half $\beta_U$, so no compensating factor of $1/4$
occurs in \eqref{eq:beta-second-moment}.

\section{The sharp reciprocal-square-root model}
\label{sec:sharp}

For $\Real s>1$, write
\[
 \zeta(s)^{-1/2}=\sum_{n\ge1}\frac{\lambda(n)}{n^s}.
\]
The function $\lambda$ is multiplicative and
\[
 \lambda(p^k)=(-1)^k\binom{1/2}{k}
 \quad(k\ge0),
\]
so $\lambda(p)=-1/2$, $\lambda(p^2)=-1/8$, and so forth.

Squaring the Dirichlet series in its half-plane of absolute convergence gives
\[
 \left(\sum_{n\ge1}\lambda(n)n^{-s}\right)^2
 =\zeta(s)^{-1}=\sum_{n\ge1}\mu(n)n^{-s},
\]
so that, comparing Dirichlet coefficients,
\begin{equation}\label{eq:convolution}
  (\lambda*\lambda)(n)=\mu(n)\qquad(n\ge1).
\end{equation}
This exact convolution identity is what removes the same-side poles from the
four-variable Euler product below.

Define the sharp truncated square
\[
 c_U(a):=\sum_{\substack{\ell m=a\\ \ell,m\le U}}\lambda(\ell)\lambda(m),
 \qquad a\le U^2.
\]
Then $c_U(a)=\mu(a)$ for $a\le U$, but the boundary range $U<a\le U^2$ is not
Möbius: the two individual cutoffs remain visible.

The diagonal arithmetic sum attached to this model is
\begin{equation}\label{eq:Sdef}
 \Sdiag:=\sum_{d\le U^2}\varphi(d)
 \left(\sum_{\substack{\ell,m\le U\\d\mid \ell m}}
       \frac{\lambda(\ell)\lambda(m)}{\ell m}\right)^2.
\end{equation}

\begin{lemma}\label{lem:gcd}
One has the exact identity
\begin{equation}\label{eq:gcd}
 \Sdiag=\sum_{a,b\le U^2}c_U(a)c_U(b)\frac{(a,b)}{ab}.
\end{equation}
\end{lemma}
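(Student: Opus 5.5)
The plan is to expand the square in \eqref{eq:Sdef} and use $\sum_{d\mid n}\varphi(d)=n$, which is the same identity that identified $S_b(1,U)$ in Lemma~\ref{lem:truncated-diagonal}. First I would group the inner sum by the product $a=\ell m$. For fixed $d$,
\[
 \sum_{\substack{\ell,m\le U\\ d\mid \ell m}}\frac{\lambda(\ell)\lambda(m)}{\ell m}
 =\sum_{\substack{a\le U^2\\ d\mid a}}\frac1a
 \sum_{\substack{\ell m=a\\ \ell,m\le U}}\lambda(\ell)\lambda(m)
 =\sum_{\substack{a\le U^2\\ d\mid a}}\frac{c_U(a)}{a}.
\]
This works because the divisibility condition $d\mid\ell m$ depends only on the product $\ell m$, and the cutoffs $\ell,m\le U$ are exactly those built into $c_U(a)$. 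Also $a=\ell m\le U^2$ automatically.

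Next I would square this expression and sum against $\varphi(d)$ over $d\le U^2$:
\[
 \Sdiag=\sum_{d\le U^2}\varphi(d)\sum_{\substack{a,b\le U^2\\ d\mid a,\ d\mid b}}\frac{c_U(a)c_U(b)}{ab}
 =\sum_{a,b\le U^2}\frac{c_U(a)c_U(b)}{ab}\sum_{\substack{d\mid (a,b)\\ d\le U^2}}\varphi(d).
\]
Every sum here is finite, so interchanging the order of summation is trivial.

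Finally, every divisor of $(a,b)$ is at most $a\le U^2$, so the constraint $d\le U^2$ is redundant. Therefore $\sum_{d\mid(a,b)}\varphi(d)=(a,b)$, which gives \eqref{eq:gcd}.

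There is no real obstacle here. The only point needing care is the first regrouping: the condition $d\mid\ell m$ must be expressed through $a=\ell m$ alone, so that the truncation structure passes intact into $c_U$. The identity is exact and needs no analytic input.
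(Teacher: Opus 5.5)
Your proposal is correct and follows essentially the same route as the paper: you expand the square, group the terms by $a=\ell_1m_1$ and $b=\ell_2m_2$ to produce $c_U(a)c_U(b)/(ab)$, and collapse $\sum_{d\mid(a,b)}\varphi(d)=(a,b)$. Your remark that the cutoff $d\le U^2$ is redundant, because every divisor of $(a,b)$ is at most $U^2$, makes explicit a detail the paper leaves implicit.
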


\begin{proof}
Expanding the square in \eqref{eq:Sdef} and grouping $a=\ell_1m_1$ and
$b=\ell_2m_2$ gives
\[
 \Sdiag=\sum_{a,b\le U^2}\frac{c_U(a)c_U(b)}{ab}
       \sum_{d\mid(a,b)}\varphi(d).
\]
The classical identity $\sum_{d\mid n}\varphi(d)=n$ yields \eqref{eq:gcd}.
\end{proof}

These identities are exact.  They do not yet give the asymptotic size of
$\Sdiag$.

\section{The four-variable Euler product}
\label{sec:euler}

For $\Real x_i>0$, define
\begin{equation}\label{eq:Ddef}
 D(\mathbf x)=\sum_{\ell_1,m_1,\ell_2,m_2\ge1}
 \frac{\lambda(\ell_1)\lambda(m_1)\lambda(\ell_2)\lambda(m_2)
       (\ell_1m_1,\ell_2m_2)}
 {\ell_1^{1+x_1}m_1^{1+x_2}\ell_2^{1+x_3}m_2^{1+x_4}}.
\end{equation}
It has the Euler product $D=\prod_pL_p$, where
\begin{equation}\label{eq:Lp}
 L_p(\mathbf x)=\sum_{i,j,i',j'\ge0}
 \lambda(p^i)\lambda(p^j)\lambda(p^{i'})\lambda(p^{j'})
 p^{-\max(i+j,i'+j')}
 p^{-ix_1-jx_2-i'x_3-j'x_4}.
\end{equation}

\begin{proposition}\label{prop:factor}
There is $\eta>0$ and an Euler product $\mathcal A(\mathbf x)$, holomorphic for
$|x_i|<\eta$ in a simply connected neighbourhood reached from $\Real x_i>0$,
such that
\begin{equation}\label{eq:factor}
 D(\mathbf x)=\mathcal A(\mathbf x)
 \prod_{i=1}^4\zeta(1+x_i)^{-1/2}
 \prod_{\substack{i\in\{1,2\}\\j\in\{3,4\}}}
      \zeta(1+x_i+x_j)^{1/4}.
\end{equation}
Moreover $\mathcal A(\mathbf0)=1$.
\end{proposition}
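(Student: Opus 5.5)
We define $\mathcal A(\mathbf x)$ by the identity \eqref{eq:factor} itself, prime by prime:
\[
 \mathcal A_p(\mathbf x)=L_p(\mathbf x)\prod_{i=1}^4(1-p^{-1-x_i})^{-1/2}\prod_{\substack{i\in\{1,2\}\\ j\in\{3,4\}}}(1-p^{-1-x_i-x_j})^{1/4},
\qquad
\mathcal A=\prod_p\mathcal A_p .
\]
On $\Real x_i>0$ this product equals $D(\mathbf x)$ divided by the displayed zeta factors, by the Euler products of $\zeta^{\pm1/2}$ and $\zeta^{1/4}$. The proof then reduces to two claims: $\mathcal A_p(\mathbf x)=1+O(p^{-1-\epsilon})$ uniformly for $|x_i|<\eta$, and $\mathcal A(\mathbf 0)=1$.

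\textbf{Step 1: low-order expansion of $L_p$.} We expand $L_p$ in \eqref{eq:Lp} to first order in $p^{-1}$ by keeping only terms with total weight $p^{-1-\text{(linear in }x)}$.
\begin{itemize}
 \item The term $i=j=i'=j'=0$ gives $1$.
 \item A single exponent equal to $1$ gives $\lambda(p)p^{-1-x_k}=-\tfrac12p^{-1-x_k}$ for each $k$.
 \item Pairs with $i+j=1$ and $i'+j'=1$ contribute $\lambda(p)^2p^{-1}p^{-x_a-x_b}$, with $a\in\{1,2\}$ and $b\in\{3,4\}$, which is $\tfrac14p^{-1-x_a-x_b}$.
 \item Every other term has $\max(i+j,i'+j')+\sum(\text{exponents})\,\Real x\ge2-O(\eta)$, or carries $p^{-2}$; these are $O(p^{-2+4\eta})$.
\end{itemize}
Hence
\[
 L_p=1-\tfrac12\sum_k p^{-1-x_k}+\tfrac14\sum_{a,b}p^{-1-x_a-x_b}+O(p^{-2+C\eta}).
\]
The zeta factors at $p$ expand as $1+\tfrac12\sum_k p^{-1-x_k}-\tfrac14\sum_{a,b}p^{-1-x_a-x_b}+O(p^{-2+C\eta})$. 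Multiplying the two, the first-order terms cancel and $\mathcal A_p=1+O(p^{-2+C\eta})$.

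\textbf{Step 2: the tail bound.} The step needing care is the uniform bound on the remainder. The sum over $i,j,i',j'$ is infinite, so I must show the tail of $L_p$ converges absolutely in the region $\Real x_i>-\eta$ with the stated decay. Since $|\lambda(p^k)|\le1$, the term indexed by $(i,j,i',j')$ is bounded by
\[
 p^{-\max(i+j,i'+j')+\eta(i+j+i'+j')}\le p^{-(1-2\eta)\max(i+j,i'+j')}.
\]
The number of quadruples with $\max=k$ is $O(k^3)$. Summing over $k\ge2$ gives $O(p^{-2+4\eta})$ for $\eta<1/8$, and the exceptional $k=1$ cross terms are already listed in Step 1. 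So $\sum_p|\mathcal A_p-1|$ converges uniformly, and $\mathcal A$ is holomorphic for $|x_i|<\eta$. The branches of the fractional powers are fixed by continuity from $\Real x_i>0$, where the local factors lie near $1$. This is the simply connected neighbourhood in the statement: on the polydisc each local factor is near $1$, so principal powers are consistent. Away from finitely many small primes $|\cdot-1|<1/2$ holds; for the small primes one shrinks $\eta$ so the local factors stay nonvanishing.

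\textbf{Step 3: $\mathcal A(\mathbf 0)=1$.} This identity is where the structure $\lambda*\lambda=\mu$ of \eqref{eq:convolution} enters; the claim is not automatic, since $\mathcal A_p(\mathbf 0)$ could a priori differ from $1$. At $\mathbf x=\mathbf 0$ the local sum groups as
\[
 L_p(\mathbf 0)=\sum_{k,k'}c(p^k)c(p^{k'})p^{-\max(k,k')},
 \qquad c=\lambda*\lambda=\mu,
\]
so only $k,k'\in\{0,1\}$ survive and
\[
 L_p(\mathbf 0)=1-2p^{-1}+p^{-1}=1-p^{-1}.
\]
The zeta factors at $\mathbf 0$ give $(1-p^{-1})^{-2}(1-p^{-1})^{4/4}=(1-p^{-1})^{-1}$. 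Hence $\mathcal A_p(\mathbf 0)=1$ for every $p$, and therefore $\mathcal A(\mathbf 0)=1$.
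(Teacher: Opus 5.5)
Your proposal is correct and follows the paper's proof. You match the first-order local terms $-\tfrac12p^{-1-x_r}$ and $\tfrac14p^{-1-x_r-x_s}$ against the singular zeta factors to get $\mathcal A_p=1+O(p^{-2+C\eta})$, then use $\lambda*\lambda=\mu$ to get $L_p(\mathbf 0)=1-p^{-1}$, which equals the product of the singular local factors at the origin; your Step~2 only makes explicit the tail bound that the paper states in one line.
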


\begin{proof}
At a fixed prime, terms of order $p^{-1}$ in \eqref{eq:Lp} arise in two ways.
If exactly one of $i,j,i',j'$ equals $1$, the contribution is
\[
 -\frac12\sum_{r=1}^4p^{-1-x_r}.
\]
If one exponent from $\{i,j\}$ and one from $\{i',j'\}$ equal $1$, the total
valuation on each side is $1$, and the contribution is
\[
 \frac14\sum_{\substack{r\in\{1,2\}\\s\in\{3,4\}}}
 p^{-1-x_r-x_s}.
\]
All other terms are $O(p^{-2+C\eta})$, uniformly for $|x_i|<\eta$.  These are
exactly the first-order terms of the local factors on the right of
\eqref{eq:factor}.  Dividing $L_p$ by those local factors therefore gives
\[
 \mathcal A_p(\mathbf x)=1+O(p^{-2+C\eta}),
\]
so $\prod_p\mathcal A_p$ converges normally after reducing $\eta$ if necessary.

At the origin, group the variables in \eqref{eq:Lp} by
$a=i+j$ and $b=i'+j'$.  Identity~\eqref{eq:convolution} gives
$\sum_{i+j=a}\lambda(p^i)\lambda(p^j)=\mu(p^a)$, hence
\[
 L_p(\mathbf0)=\sum_{a,b\in\{0,1\}}\mu(p^a)\mu(p^b)p^{-\max(a,b)}
 =1-\frac1p.
\]
The product of the eight displayed singular local factors at the origin is also
$(1-p^{-1})^2(1-p^{-1})^{-1}=1-p^{-1}$.  Thus
$\mathcal A_p(\mathbf0)=1$ for every $p$, and $\mathcal A(\mathbf0)=1$.
\end{proof}

The same-side sums $x_1+x_2$ and $x_3+x_4$ do not occur in the polar part.  At
first order they would correspond to total valuation $2$ on one side, and the
identity $\mu(p^2)=0$ removes them.  The operative singularities are therefore
the four bipartite quarter-poles.

For non-integral $U$, four applications of Perron's formula give, in the usual
truncated-integral sense,
\begin{equation}\label{eq:perron}
 \Sdiag=\frac{1}{(2\pi i)^4}\int_{(c_1)}\!\cdots\!\int_{(c_4)}
 D(\mathbf x)\frac{U^{x_1+x_2+x_3+x_4}}{x_1x_2x_3x_4}\dd\mathbf x,
 \qquad c_i>0.
\end{equation}
Formula \eqref{eq:perron} is exact after taking the truncation limits.  The
non-trivial missing step is to deform and localise these four contours with
uniform error control.

\section{The residue functional and its exact evaluation}
\label{sec:residue}

The local scaling suggested by Proposition~\ref{prop:factor} is $x_i=u_i/\log
U$.  The four Perron kernels and the eight fractional zeta factors then have net
homogeneity $(\log U)^{-1}$.  Rather than treating a conditionally convergent
four-dimensional inverse-Mellin (Bromwich) integral as a definition, we define the resulting
constant by its non-negative inverse-Laplace form.

Let $\Omega\subset[0,\infty)^4$ consist of
$\boldsymbol\tau=(\tau_{13},\tau_{14},\tau_{23},\tau_{24})$ for which
\begin{align*}
 c_1&=1-\tau_{13}-\tau_{14}>0,&
 c_2&=1-\tau_{23}-\tau_{24}>0,\\
 c_3&=1-\tau_{13}-\tau_{23}>0,&
 c_4&=1-\tau_{14}-\tau_{24}>0.
\end{align*}
Define
\begin{equation}\label{eq:Cres}
 \Cres:=\frac{1}{\Gamma(\tfrac14)^4\pi^2}
 \int_\Omega
 (\tau_{13}\tau_{14}\tau_{23}\tau_{24})^{-3/4}
 (c_1c_2c_3c_4)^{-1/2}\dd\boldsymbol\tau.
\end{equation}
The proof below both establishes convergence and evaluates the integral.

\begin{theorem}\label{thm:C}
The integral \eqref{eq:Cres} converges and
\begin{equation}\label{eq:Cvalue}
 \displaystyle
 \Cres=\frac{\Gamma(\tfrac14)^4}{2\pi^4}
       =\frac{2}{\Gamma(\tfrac34)^4}
 =0.8869411685781154\ldots.
\end{equation}
\end{theorem}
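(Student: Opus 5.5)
The plan is to exploit the bipartite structure of $\Omega$: each $\tau_{ij}$ enters exactly two of the constraints $c_1,\dots,c_4$. Fixing the "diagonal" pair $(\tau_{13},\tau_{24})$ decouples the two remaining variables. Put $a=1-\tau_{13}$ and $b=1-\tau_{24}$. The constraints then read $c_1=a-\tau_{14}$, $c_4=b-\tau_{14}$, $c_3=a-\tau_{23}$ and $c_2=b-\tau_{23}$. So $\tau_{14}$ and $\tau_{23}$ range independently over $(0,\min(a,b))$ and meet identical one-dimensional integrals. By Tonelli (the integrand is non-negative),
\[
 \Cres=\frac{1}{\Gamma(\tfrac14)^4\pi^2}\iint_{\tau_{13}+\tau_{24}<\,\cdots}
 (\tau_{13}\tau_{24})^{-3/4}K(1-\tau_{13},1-\tau_{24})^2\,d\tau_{13}\,d\tau_{24},
 \qquad
 K(a,b)=\int_0^{\min(a,b)}t^{-3/4}(a-t)^{-1/2}(b-t)^{-1/2}\,dt ,
\]
with $\tau_{13},\tau_{24}\in(0,1)$. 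Convergence is then a local check. The function $K$ is homogeneous of degree $-3/4$. For $a\le b$, substituting $t=as$ gives $K(a,b)=a^{-3/4}b^{-1/2}\,B(\tfrac14,\tfrac12)\,{}_2F_1(\tfrac12,\tfrac14;\tfrac34;a/b)$. This has at worst a logarithmic singularity as $a/b\to1$, so $K^2$ is integrable near the diagonal. The remaining singularities $\tau^{-3/4}$ at $0$ and $K^2\ll a^{-3/2}$-type blow-up as $a\to0$ are checked to be integrable by power counting. The latter follows from $K(a,b)\ll a^{-1/4}b^{-1/2}$ for $a\ll b$ together with the overall degree count: degree $-1$ in total, with each face of the polytope carrying an integrable exponent.

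For the evaluation, I would rather not integrate $K^2$ directly. Instead I would pass to the Laplace/Bromwich representation, which the homogeneity makes natural. Replace the four $1$'s in $c_i$ by independent $t_i$ and take the four-fold Laplace transform in $t_1,\dots,t_4$. Each $c_i^{-1/2}$ produces $\Gamma(\tfrac12)z_i^{-1/2}$, and each $\tau_{ij}^{-3/4}$ produces $\Gamma(\tfrac14)(z_i+z_j)^{-1/4}$. Hence $\Cres$ is the value at $t=(1,1,1,1)$ of the inverse Laplace transform of $\prod_i z_i^{-1/2}\prod_{i\in\{1,2\},j\in\{3,4\}}(z_i+z_j)^{-1/4}$. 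This confirms that \eqref{eq:Cres} is exactly the residue functional suggested by Proposition~\ref{prop:factor}. It also gives a second route: write each $(z_i+z_j)^{-1/4}$ as a Mellin--Barnes integral in $z_i/z_j$. The four $z$-integrals then become Hankel integrals $\frac{1}{2\pi i}\int e^{z}z^{-w}dz=1/\Gamma(w)$. What remains is a low-dimensional Barnes integral in the Mellin variables.

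The step I expect to be the obstacle is collapsing that remaining Barnes integral (equivalently, $\iint(\tau_{13}\tau_{24})^{-3/4}K^2$) to the closed form $\Gamma(\tfrac14)^4/(2\pi^4)$. I would first try the reduced double integral in polar-type coordinates: $a=r\cos^2\phi$-style, with the homogeneity of $K$ used to separate out the radial integral as a Beta function. The angular integral is a product of two ${}_2F_1(\tfrac12,\tfrac14;\tfrac34;\cdot)$ factors against a power weight. I would evaluate it by expanding one factor in series and integrating termwise. That yields a ${}_3F_2$ or ${}_4F_3$ at unit argument with quarter-integer parameters, which I expect to be well-poised and summable by Dixon or Watson. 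Finally, I would simplify via the reflection formula $\Gamma(\tfrac14)\Gamma(\tfrac34)=\pi\sqrt2$ to reach both forms in \eqref{eq:Cvalue}, and cross-check numerically against $0.88694\ldots$.
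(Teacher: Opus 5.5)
\paragraph{Verdict.} Your first reduction is correct and is the paper's own first step: the paper fixes the other perfect matching $\tau_{14},\tau_{23}$, which is equivalent by symmetry, and the remaining domain is the full square $(0,1)^2$. The genuine gap is that the evaluation, which is the whole content of the theorem, is left open, and the route you sketch for it does not get there.

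\paragraph{Two local slips.} For $a\le b$ one has $K(a,b)=B\,a^{-1/4}b^{-1/2}F(a/b)$, with $B=B(\tfrac14,\tfrac12)$ and $F(t)={}_2F_1(\tfrac12,\tfrac14;\tfrac34;t)$; the exponent is not $a^{-3/4}$. Hence $K^2\ll a^{-1/2}b^{-1}$ as $a\to0$, whereas an $a^{-3/2}$ blow-up would not be integrable. Convergence need not be checked separately anyway. Every integrand is non-negative, so Tonelli gives equality in $[0,\infty]$ throughout, and a finite final value proves convergence; this is how the paper argues.

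\paragraph{The radial step.} Polar coordinates do not separate variables, because the weight $(1-a)^{-3/4}(1-b)^{-3/4}$ and the square domain are not homogeneous. What works is the ratio substitution $a=bt$ on $\{a<b\}$. The $b$-integral is then
$\int_0^1 b^{-1/2}(1-b)^{-3/4}(1-bt)^{-3/4}\,db=B\,{}_2F_1(\tfrac34,\tfrac12;\tfrac34;t)=B(1-t)^{-1/2}$.
It collapses only because the parameter $\tfrac34$ is repeated; it is not a plain Beta function. This gives
\[
 \Gamma(\tfrac14)^4\pi^2\,\Cres=2B^3\int_0^1 t^{-1/2}(1-t)^{-1/2}F(t)^2\,dt .
\]
So the theorem is equivalent to the identity $\int_0^1 t^{-1/2}(1-t)^{-1/2}F(t)^2\,dt=B$ (Lemma~\ref{lem:hyper}), and that identity is the missing idea.

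\paragraph{Why your evaluation fails.} Expanding one factor of $F$ and integrating termwise does not produce a single ${}_3F_2$ or ${}_4F_3$. It produces the double series
\[
 \sum_{n\ge0}\frac{(\frac12)_n(\frac14)_n}{(\frac34)_n\,n!}\,B(n+\tfrac12,\tfrac12)\;
 {}_3F_2\!\left(\tfrac12,\tfrac14,n+\tfrac12;\tfrac34,n+1;1\right).
\]
Its inner series is neither well-poised nor of Watson, Whipple or Saalsch\"utz type, even up to contiguity, so Dixon and Watson have nothing to act on.

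\paragraph{What the paper does instead.} It opens both copies of $F$ by Euler's integral and performs the $t$-integral first. The M\"obius substitution $t=u/(1-\sigma+\sigma u)$ turns that integral into $\pi(1-\sigma)^{-1/2}{}_2F_1(\tfrac12,\tfrac12;1;\frac{s-\sigma}{1-\sigma})$. After symmetrising, set $\xi=(s-\sigma)/(1-\sigma)$ and integrate out $s$ with a further Euler integral, $\int_\xi^1 s^{-3/4}(1-s)^{-1/2}(s-\xi)^{-3/4}\,ds=B\,\xi^{-1/2}(1-\xi)^{-1/4}$. One is left with
\[
 \int_0^1\xi^{-1/2}(1-\xi)^{-1/2}{}_2F_1(\tfrac12,\tfrac12;1;\xi)\,d\xi=\pi\,{}_3F_2(\tfrac12,\tfrac12,\tfrac12;1,1;1)=\pi^2/\Gamma(\tfrac34)^4 .
\]
That last series is the genuinely well-poised one: Dixon applies, or Clausen plus Gauss as in the paper. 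The reflection formula then gives the value $B$.

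\paragraph{The Laplace observation.} Your Laplace-transform remark is correct, and it matches the paper's reason for defining $\Cres$ in inverse-Laplace form. However, the Mellin--Barnes route is only named, not carried out, so as written the proposal does not establish \eqref{eq:Cvalue}.
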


\begin{proof}
Put
\[
 B=B\!\left(\frac14,\frac12\right)
   =\frac{\Gamma(\tfrac14)^2}{\sqrt{2\pi}},
 \qquad
 F(t)={}_2F_1\!\left(\frac12,\frac14;\frac34;t\right).
\]
Set $y=1-\tau_{14}$ and $z=1-\tau_{23}$.  For fixed $y,z\in(0,1)$, the
$\tau_{13}$ and $\tau_{24}$ integrations separate and each equals
\[
 g(y,z)=\int_0^{\min(y,z)}
 t^{-3/4}\{(y-t)(z-t)\}^{-1/2}\dd t
 =m^{-1/4}M^{-1/2}B F(m/M),
\]
where $m=\min(y,z)$ and $M=\max(y,z)$.  This follows from $t=ms$ and Euler's
integral for ${}_2F_1$.  Tonelli's theorem applies because every integrand is
non-negative.  By symmetry in $y,z$,
\begin{align}
 \Gamma(\tfrac14)^4\pi^2\Cres
 &=2B^2\int_0^1(1-y)^{-3/4}y^{-1}
   \int_0^y(1-z)^{-3/4}z^{-1/2}F(z/y)^2\dd z\dd y\notag\\
 &=2B^2\int_0^1t^{-1/2}F(t)^2Y(t)\dd t,\label{eq:reduce1}
\end{align}
where, after $z=yt$,
\begin{align*}
 Y(t)&=\int_0^1y^{-1/2}(1-y)^{-3/4}(1-yt)^{-3/4}\dd y\\
 &=B\!\left(\frac12,\frac14\right)
   {}_2F_1\!\left(\frac34,\frac12;\frac34;t\right)
 =B(1-t)^{-1/2}.
\end{align*}
It remains to use the following identity, proved in
Lemma~\ref{lem:hyper}:
\begin{equation}\label{eq:K}
 \int_0^1t^{-1/2}(1-t)^{-1/2}F(t)^2\dd t=B.
\end{equation}
Equations \eqref{eq:reduce1} and \eqref{eq:K} give
\[
 \Gamma(\tfrac14)^4\pi^2\Cres=2B^4.
\]
Since $B=\Gamma(\tfrac14)^2/\sqrt{2\pi}$, this is
$\Cres=\Gamma(\tfrac14)^4/(2\pi^4)$.  The second form follows from
$\Gamma(\tfrac14)\Gamma(\tfrac34)=\pi\sqrt2$.
\end{proof}

\begin{lemma}\label{lem:hyper}
For $F(t)={}_2F_1(\tfrac12,\tfrac14;\tfrac34;t)$,
\[
 \int_0^1t^{-1/2}(1-t)^{-1/2}F(t)^2\dd t
 =B\!\left(\frac14,\frac12\right).
\]
\end{lemma}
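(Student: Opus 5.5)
The plan is to turn the left-hand side back into a positive multiple integral with elementary integrands, the same way $g(y,z)$ was obtained in the proof of Theorem~\ref{thm:C}, and then integrate in a different order so that only Beta integrals remain. The first step is Euler's integral
\[
 F(t)=\frac1B\int_0^1 s^{-3/4}(1-s)^{-1/2}(1-ts)^{-1/2}\dd s ,
 \qquad B=B\!\left(\tfrac14,\tfrac12\right).
\]
This is valid because the parameters are $c=\tfrac34$ and $b=\tfrac14$, so $c-b=\tfrac12$, and the normalising constant is $\Gamma(\tfrac34)/(\Gamma(\tfrac14)\Gamma(\tfrac12))=1/B$. Applying it to both copies of $F$ gives
\[
 \int_0^1t^{-1/2}(1-t)^{-1/2}F(t)^2\dd t
 =\frac1{B^2}\int_0^1\!\!\int_0^1 (sr)^{-3/4}\{(1-s)(1-r)\}^{-1/2}
 K(s,r)\dd s\dd r,
\]
where
\[
 K(s,r)=\int_0^1 t^{-1/2}(1-t)^{-1/2}(1-ts)^{-1/2}(1-tr)^{-1/2}\dd t .
\]
Tonelli justifies every interchange, since all integrands are non-negative. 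Integrability at $t=1$ is not an issue: $F$ has only a logarithmic singularity there, because $c=a+b$.

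The second step is to evaluate $K$ in closed form. The substitution $t=\sin^2\theta$ turns $K$ into $2\int_0^{\pi/2}\{(1-s\sin^2\theta)(1-r\sin^2\theta)\}^{-1/2}\dd\theta$, which is a complete elliptic-type integral. I would not try to evaluate it directly. Instead I would return to the $s,r$ variables and make the Landen-type substitution that rationalises the pair of square roots; equivalently, write $K$ as an Appell $F_1$ and use its reduction to a ${}_2F_1$ when the two variables are combined.

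More concretely, and this is what I would try first: substitute $s=\sin^2\phi$ and $r=\sin^2\psi$, and use the weight $(sr)^{-3/4}$ to pass to the variables $u=ts$ and $v=tr$. In these variables the $t$-integration becomes a Beta integral in $t$ over $[\max(u,v),1]$. The $(u,v)$ integral then factorises after the change $u=mv$ with $m\le1$, exactly as in the reduction \eqref{eq:reduce1}. The target is to reach an expression of the form $B^{-2}\int_0^1(\cdots)\,Y(m)\dd m$ in which the inner integrals are the Beta integrals $B(\tfrac14,\tfrac12)$ and $B(\tfrac12,\tfrac14)$, so that the total is $B^{-2}\cdot B^3=B$.

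As a fallback, I would expand one factor of $F$ as its power series and keep the other in Euler form. The resulting single sum has terms of the form ${}_3F_2(\cdots;1)$, and I would test them against the classical $1$-evaluations of Gauss, Dixon, Watson and Whipple after a Thomae transformation. I would also check the identity numerically to high precision; the value of $\Cres$ in \eqref{eq:Cvalue} provides that check.

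The main obstacle is the second step, namely finding the change of variables in which the double square root $\{(1-ts)(1-tr)\}^{-1/2}$ separates. None of the standard $3F2$ summation theorems applies directly to the parameters $(\tfrac12,\tfrac14;\tfrac34)$ with $c=a+b$, so the argument must rely on the integral geometry. My expectation is that the same polytope symmetry that made $\tau_{13}$ and $\tau_{24}$ separate in Theorem~\ref{thm:C}, used in the other pairing $(\tau_{14},\tau_{23})$, supplies exactly this separation. In that case the lemma would follow by computing the integral \eqref{eq:Cres} a second way and equating the two results.
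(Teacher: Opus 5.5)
\textbf{There is a genuine gap: the double integral is never evaluated, and the step you propose for it is false.}

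Your first step is the paper's. Opening both factors of $F$ by Euler's integral expresses $B^2$ times the left-hand side as the double integral of $(sr)^{-3/4}\{(1-s)(1-r)\}^{-1/2}K(s,r)$, and your $K$ is the paper's $W$.

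Your Appell remark is correct as far as it goes. Since $c=b+b'$, the reduction of $F_1$ gives
\[
 K(s,r)=\pi(1-r)^{-1/2}\,{}_2F_1\!\left(\tfrac12,\tfrac12;1;\tfrac{s-r}{1-r}\right),
\]
which is exactly what the paper obtains via $t=u/(1-\sigma+\sigma u)$. But this is a complete elliptic integral, and no change of variables makes it disappear.

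\emph{The concrete step fails.} With $u=ts$, $v=tr$, all powers of $t$ cancel, and the $t$-integral is
\[
 \int_{\max(u,v)}^1(1-t)^{-1/2}(t-u)^{-1/2}(t-v)^{-1/2}\dd t
 =\pi(1-m)^{-1/2}\,{}_2F_1\!\left(\tfrac12,\tfrac12;1;\tfrac{1-M}{1-m}\right),
\]
where $m=\min(u,v)$ and $M=\max(u,v)$. With three distinct branch points this is an elliptic integral, not a Beta integral. So the plan to obtain $B^{-2}\cdot B^3$ from Beta integrals alone cannot work.

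\emph{The closing suggestion does not help.} Equating two computations of $\Cres$ needs one of them to be independent. The relabelling $3\leftrightarrow4$ preserves $\Omega$ and its integrand while swapping the pairings $(\tau_{13},\tau_{24})$ and $(\tau_{14},\tau_{23})$. Integrating out the other pair therefore reproduces the identical one-dimensional integral. The power-series fallback only replaces the integral by an infinite series of ${}_3F_2(1)$ values with $n$-dependent parameters, and no summation for that series is identified.

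\textbf{What is missing.} Keep the elliptic integral as a function of the single variable $\xi=(s-r)/(1-r)$. Restrict to $r<s$ by symmetry and change variables from $r$ to $\xi$. The remaining $s$-integral is then a Beta integral,
\[
 \int_\xi^1 s^{-3/4}(1-s)^{-1/2}(s-\xi)^{-3/4}\dd s=B\,\xi^{-1/2}(1-\xi)^{-1/4},
\]
and one is left with
\[
 B^2\int_0^1t^{-1/2}(1-t)^{-1/2}F(t)^2\dd t
 =2\pi B\int_0^1\xi^{-1/2}(1-\xi)^{-1/2}\,{}_2F_1\!\left(\tfrac12,\tfrac12;1;\xi\right)\dd\xi
 =2\pi^2B\,{}_3F_2\!\left(\tfrac12,\tfrac12,\tfrac12;1,1;1\right).
\]
Your $(u,v)$ variables reach the same expression if you put $\eta=(1-u)/(1-v)$; the $v$-integral is then a Beta integral equal to $B(1-\eta)^{-1/2}$.

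The decisive non-elementary input is
\[
 {}_3F_2\!\left(\tfrac12,\tfrac12,\tfrac12;1,1;1\right)=\frac{\pi}{\Gamma(\tfrac34)^4}.
\]
The paper obtains this from Clausen's identity ${}_3F_2(\tfrac12,\tfrac12,\tfrac12;1,1;z)={}_2F_1(\tfrac14,\tfrac14;1;z)^2$ together with Gauss's theorem. Dixon's or Watson's theorem with $a=b=c=\tfrac12$ also gives it directly. So a classical ${}_3F_2$ summation does apply, just to these parameters rather than the original ones.

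The reflection formula then turns $2\pi^3B/\Gamma(\tfrac34)^4$ into $B^3$. Two of the three factors of $B$ you were aiming for therefore come from this ${}_3F_2$ value, not from Beta integrals.
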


\begin{proof}
All integrands below are non-negative.  Consequently Tonelli's theorem justifies
the interchanges, initially with values in $[0,\infty]$; the final finite value
then proves convergence throughout.  Write $B=B(\tfrac14,\tfrac12)$.  Euler's
integral gives
\[
 F(t)=B^{-1}\int_0^1s^{-3/4}(1-s)^{-1/2}(1-ts)^{-1/2}\dd s.
\]
Opening both factors of $F$ yields
\[
 B^2K=\int_0^1\!\int_0^1(s\sigma)^{-3/4}
 [(1-s)(1-\sigma)]^{-1/2}W(s,\sigma)\dd s\dd\sigma,
\]
where
\[
 W(s,\sigma)=\int_0^1
 \frac{t^{-1/2}(1-t)^{-1/2}}
      {\sqrt{(1-st)(1-\sigma t)}}\dd t.
\]
The substitution $t=u/(1-\sigma+\sigma u)$ gives
\[
 W(s,\sigma)=\pi(1-\sigma)^{-1/2}
 {}_2F_1\!\left(\frac12,\frac12;1;\frac{s-\sigma}{1-\sigma}\right).
\]
Use symmetry to restrict to $0<\sigma<s<1$ and double.  With
$\xi=(s-\sigma)/(1-\sigma)$, equivalently
$\sigma=(s-\xi)/(1-\xi)$, one obtains
\[
 B^2K=2\pi\int_0^1s^{-3/4}(1-s)^{-1/2}
 \int_0^s(s-\xi)^{-3/4}(1-\xi)^{-1/4}
 {}_2F_1\!\left(\frac12,\frac12;1;\xi\right)\dd\xi\dd s.
\]
Interchange the integrations.  The substitution
$s=\xi+(1-\xi)r$ and Euler's integral show that
\[
 \int_\xi^1s^{-3/4}(1-s)^{-1/2}(s-\xi)^{-3/4}\dd s
 =B\,\xi^{-1/2}(1-\xi)^{-1/4}.
\]
Therefore
\[
 B^2K=2\pi B\int_0^1\xi^{-1/2}(1-\xi)^{-1/2}
 {}_2F_1\!\left(\frac12,\frac12;1;\xi\right)\dd\xi.
\]
Termwise integration, justified by non-negativity, gives
\begin{align*}
 \int_0^1\xi^{-1/2}(1-\xi)^{-1/2}
 {}_2F_1\!\left(\frac12,\frac12;1;\xi\right)\dd\xi
 &=\pi\,{}_3F_2\!\left(\frac12,\frac12,\frac12;1,1;1\right)\\
 &=\frac{\pi^2}{\Gamma(\tfrac34)^4}.
\end{align*}
For the last equality, Clausen's identity gives
\[
 {}_3F_2\!\left(\frac12,\frac12,\frac12;1,1;z\right)
 ={}_2F_1\!\left(\frac14,\frac14;1;z\right)^2,
\]
and Gauss's theorem at $z=1$ gives
${}_2F_1(\tfrac14,\tfrac14;1;1)=\sqrt\pi/\Gamma(\tfrac34)^2$.
Thus
\[
 B^2K=\frac{2\pi^3B}{\Gamma(\tfrac34)^4}=B^3,
\]
where the final equality is the reflection formula
$\Gamma(\tfrac14)\Gamma(\tfrac34)=\pi\sqrt2$.  Hence $K=B$.
\end{proof}

\begin{remark}
Theorem~\ref{thm:C} evaluates the local residue functional forced by the
polar factorisation.  Theorem~\ref{thm:transfer} identifies this functional
with the sharp-cutoff limit $\lim_{U\to\infty}(\log U)\Sdiag$.
\end{remark}

\section{The sharp-box transfer theorem}
\label{sec:transfer}

The sharp-box transfer is proved by separating the signed part of the Euler
product into four one-variable Selberg--Delange tails.  The four cross factors
have non-negative coefficients, which permits a direct logarithmic-measure
argument.

For $z\in\mathbb C$, let $\tau_z(n)$ be defined by
\[
  \zeta(s)^z=\sum_{n\ge1}\frac{\tau_z(n)}{n^s}
  \qquad (\Real s>1),
\]
with the branch that is positive for real $s>1$.  Thus
$\tau_{-1/2}=\lambda$ and $\tau_{1/4}(n)\ge0$.  Put
\[
 \alpha=\frac14,\qquad \beta=\frac12,
 \qquad
 L(X)=\sum_{n\le X}\frac{\lambda(n)}n,
\]
where $L(X)=0$ for $0<X<1$.

\begin{lemma}
\label{lem:oneSD}
As $t\to\infty$,
\begin{align}
 \sum_{n\le e^t}\frac{\tau_{1/4}(n)}n
   &=\frac{t^{1/4}}{\Gamma(5/4)}+O(t^{-3/4}),
   \label{eq:SDplus}\\
 L(e^t)&=\frac{t^{-1/2}}{\Gamma(1/2)}+O(t^{-3/2}).
 \label{eq:SDminus}
\end{align}
Moreover, for $t\ge0$,
\begin{equation}
 |L(e^t)|\ll (1+t)^{-1/2}.
 \label{eq:Lbound}
\end{equation}
\end{lemma}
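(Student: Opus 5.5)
The plan is to derive both asymptotics from the Selberg--Delange method applied to the logarithmically weighted partial sums. We use the Dirichlet series $\sum_n \tau_z(n)n^{-1-w}=\zeta(1+w)^z$ for $z=\tfrac14$ and $z=-\tfrac12$.

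The starting point is Perron's formula in the form
\[
 \sum_{n\le e^t}\frac{\tau_z(n)}{n}
 =\frac1{2\pi i}\int_{(c)}\zeta(1+w)^z\frac{e^{tw}}{w}\,dw,
 \qquad c>0 .
\]
Near $w=0$ we write $\zeta(1+w)^z=w^{-z}G_z(w)$ with $G_z(w)=(w\zeta(1+w))^z$. This $G_z$ is holomorphic in a neighbourhood of $0$, satisfies $G_z(0)=1$, and $G_z'(0)=z\gamma_0$, where $\gamma_0$ is Euler's constant. The contour is deformed to a truncated Hankel contour around the negative real axis. The deformation stays inside a classical zero-free region $\Real w\ge -c_0/\log(2+|\Imag w|)$. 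There, $\log\zeta(1+w)$ is holomorphic and $|\zeta(1+w)^z|\ll \log^{|\Real z|}(2+|\Imag w|)$.

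We then expand $G_z(w)=1+z\gamma_0 w+O(|w|^2)$ and apply Hankel's formula
\[
 \frac1{2\pi i}\int_{\mathcal H}w^{-z-1-k}e^{tw}\,dw=\frac{t^{z+k}}{\Gamma(z+1+k)} .
\]
This gives the main term $t^{z}/\Gamma(z+1)$. For $z=\tfrac14$ this is $t^{1/4}/\Gamma(5/4)$, and for $z=-\tfrac12$ it is $t^{-1/2}/\Gamma(1/2)$. The next term is $O(t^{z-1})$, which is $O(t^{-3/4})$ and $O(t^{-3/2})$ respectively, as stated. The higher-order terms, and the contribution of the parts of the contour away from $0$, are $O(t^{z-2})$ plus errors of size $O(\exp(-c\sqrt t))$.

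The main obstacle is making the Perron truncation rigorous with an error that is $O(t^{z-1})$ rather than merely $o(t^z)$. For $z=-\tfrac12$ the coefficients $\lambda(n)$ are signed, so one cannot rely on positivity. I would avoid sharp-Perron boundary issues in the standard way, following Tenenbaum's treatment of Selberg--Delange (Theorem II.5.2 and its logarithmic-density variant). The steps are:
\begin{enumerate}[label=(\alph*)]
 \item apply the effective Perron formula with $c=1/t$ and height $T_0=e^{\sqrt t}$;
 \item use the trivial bound $\sum_{e^t<n\le 2e^t}|\tau_z(n)|/n\ll t^{|z|-1}$ to control the truncation error, which is $O(t^{|z|}e^{-\sqrt t})$ and hence negligible;
 \item bound the horizontal segments and the vertical pieces at $\Real w=-c_0/\sqrt t$ by $\exp(-c\sqrt t)$.
\end{enumerate}
Alternatively, one can quote the Selberg--Delange theorem for $\sum_{n\le X}\tau_z(n)$, which gives $X(\log X)^{z-1}\{1/\Gamma(z)+O(1/\log X)\}$, and pass to the weighted sum by partial summation. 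Done naively, however, this loses the constant contributed by the boundary term. So I would carry out the contour argument directly on $\zeta(1+w)^z/w$.

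Finally, \eqref{eq:Lbound} follows by combining the two regimes. For $t\ge t_0$, \eqref{eq:SDminus} gives $|L(e^t)|\ll t^{-1/2}$. For $0\le t\le t_0$ we have $|L(e^t)|\le\sum_{n\le e^{t_0}}1/n\ll1$, since $|\lambda(n)|\le1$. Together these give $|L(e^t)|\ll(1+t)^{-1/2}$ uniformly for $t\ge0$.
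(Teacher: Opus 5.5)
Your proposal is correct and follows the paper's own route. The steps match: Perron inversion of $\zeta(1+w)^z e^{tw}/w$; deformation to a Hankel contour inside the zero-free region; the local expansion $\zeta(1+w)^z=w^{-z}\{1+O(w)\}$, which gives $t^z/\Gamma(z+1)+O(t^{\Real z-1})$ with the rest of the contour negligible; and the compact-interval argument for \eqref{eq:Lbound}.

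The only slip is in your Hankel formula, which should read $\frac1{2\pi i}\int_{\mathcal H}w^{-z-1+k}e^{tw}\,dw=t^{z-k}/\Gamma(z+1-k)$, because the $k$th term of $w^{-z-1}G_z(w)$ carries $w^{+k}$. This corrected version is exactly what your conclusion that the next term is $O(t^{z-1})$ relies on. Your caution about naive partial summation is well founded: it leaves an undetermined constant that would swamp the main term when $z=-\tfrac12$, which is a sound reason to run the contour argument directly.
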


\begin{proof}
This is the classical Selberg--Delange theorem applied, after Perron inversion,
to $\zeta(1+s)^z$ with $z=1/4$ and $z=-1/2$.  For completeness, near $s=0$ one
has
\[
 \zeta(1+s)^z=s^{-z}\{1+O_z(s)\}.
\]
After moving the Perron contour to a Hankel contour around the negative real
axis in the standard zero-free region, the leading inverse-Laplace integral is
\[
 \frac{1}{2\pi i}\int_{\mathcal H}e^{ts}s^{-z-1}\dd s
 =\frac{t^z}{\Gamma(z+1)}.
\]
The next term in the local expansion is $O(t^{\Real z-1})$, while the remaining
contour is smaller than every fixed negative power of $t$.  This gives
\eqref{eq:SDplus} and \eqref{eq:SDminus}.  Equation \eqref{eq:Lbound} follows
from \eqref{eq:SDminus} for $t\ge2$ and by enlarging the implied constant on the
compact interval $0\le t\le2$.  More precisely, apply
\cite[Theorem~1.1, pp.~350--351]{deLaBretecheTenenbaum} with
\(f=\tau_z\), \(\varrho=z\), and regular factor \(G=1\), first for
\(z=1/4\) and then for \(z=-1/2\), and use partial summation.  The branch in
that theorem is the one fixed above; its arbitrary parameter \(A\) supplies
the two displayed remainders.
\end{proof}

Let $R\ge2$ and introduce the positive measure
\begin{equation}
 d\nu_R(u)=R^{-1/4}\sum_{n\ge1}\frac{\tau_{1/4}(n)}n
 \delta_{\log n/R}(u),
 \label{eq:nuR}
\end{equation}
and the rescaled signed tail
\begin{equation}
 W_R(v)=R^{1/2}L(e^{Rv}),\qquad v\ge0.
 \label{eq:WR}
\end{equation}

Here and below, $\mu_R\Longrightarrow\mu$ denotes weak convergence of
measures, that is, $\int f\dd\mu_R\to\int f\dd\mu$ for every bounded
continuous $f$ supported in the interval concerned.

\begin{lemma}
\label{lem:logmeasure}
On every fixed compact interval of $[0,\infty)$,
\[
 \nu_R\Longrightarrow \frac{u^{-3/4}}{\Gamma(1/4)}\dd u.
\]
Also, locally uniformly for $v>0$,
\[
 W_R(v)\longrightarrow \frac{v^{-1/2}}{\sqrt\pi},
 \qquad
 |W_R(v)|\ll (v+R^{-1})^{-1/2}.
\]
Finally, if
$I_k=[k/R,(k+1)/R)$, then uniformly for $k\ge0$,
\begin{equation}
 \nu_R(I_k)\ll
 R^{-1}\left(\frac{k+1}{R}\right)^{-3/4}.
 \label{eq:cellmass}
\end{equation}
\end{lemma}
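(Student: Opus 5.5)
All three assertions will be read off Lemma~\ref{lem:oneSD} after the change of variables $u=\log n/R$. The weak convergence comes from the distribution function of $\nu_R$, the statements about $W_R$ come from the asymptotic for $L$, and the cell bound comes from a cumulative bound plus monotonicity.

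\emph{Weak convergence of $\nu_R$.} Put $N_R(u)=\nu_R([0,u])$. Then
\[
 N_R(u)=R^{-1/4}\sum_{n\le e^{Ru}}\tau_{1/4}(n)/n .
\]
By \eqref{eq:SDplus} with $t=Ru$,
\[
 N_R(u)=\frac{u^{1/4}}{\Gamma(5/4)}+O\big(R^{-1/4}(Ru)^{-3/4}\big)=\frac{u^{1/4}}{\Gamma(5/4)}+O(R^{-1}u^{-3/4})
\]
for $Ru\ge1$. For $0\le Ru\le1$ we still have $N_R(u)\ll R^{-1/4}$, since $\tau_{1/4}(n)\ge0$ and the partial sums are bounded on that range. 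Hence $N_R(u)\to u^{1/4}/\Gamma(5/4)$ uniformly on compact intervals, including near $u=0$. The limit is the distribution function of $u^{-3/4}\,du/\Gamma(1/4)$, because $\Gamma(5/4)=\tfrac14\Gamma(1/4)$. Uniform convergence of distribution functions of positive measures, with a continuous limit, gives weak convergence against bounded continuous test functions on the compact interval. This is the standard Portmanteau argument: approximate $f$ by step functions on a fine partition and use the uniform modulus of continuity of $f$.

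\emph{The statements about $W_R$.} By \eqref{eq:SDminus},
\[
 W_R(v)=R^{1/2}\Big(\frac{(Rv)^{-1/2}}{\sqrt\pi}+O\big((Rv)^{-3/2}\big)\Big)=\frac{v^{-1/2}}{\sqrt\pi}+O(R^{-1}v^{-3/2}).
\]
The error term tends to $0$ uniformly for $v\ge v_0>0$. For the uniform bound, \eqref{eq:Lbound} gives
\[
 |W_R(v)|\ll R^{1/2}(1+Rv)^{-1/2}=(R^{-1}+v)^{-1/2}.
\]

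\emph{The cell bound \eqref{eq:cellmass}.} This is the only step with any content. Since $\nu_R$ is a positive measure,
\[
 \nu_R(I_k)\le N_R\big((k+1)/R\big)-N_R\big(k/R\big)+\nu_R(\{\text{endpoint atoms}\}),
\]
so it suffices to bound increments of $N_R$ over a length-$1/R$ interval. For $k\ge1$, write $a=k/R$. The main-term increment is
\[
 \frac{(a+1/R)^{1/4}-a^{1/4}}{\Gamma(5/4)}\ll R^{-1}a^{-3/4},
\]
and the error terms are $O(R^{-1}a^{-3/4})$. Both are $\ll R^{-1}((k+1)/R)^{-3/4}$, since $a\asymp(k+1)/R$ for $k\ge1$.

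For $k=0$, the mass is at most
\[
 R^{-1/4}\sum_{n<e}\tau_{1/4}(n)/n\ll R^{-1/4}=R^{-1}\,(1/R)^{-3/4},
\]
which is exactly the required bound. Thus the only point needing care is that the $O(t^{-3/4})$ remainder in \eqref{eq:SDplus} scales to the same order $R^{-1}u^{-3/4}$ as the main-term increment. It does, so the argument closes with uniform constants.
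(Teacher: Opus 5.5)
Your proposal is correct and follows the paper's proof. You read the distribution function of $\nu_R$ off \eqref{eq:SDplus} at $t=Ru$, and the $W_R$ statements off \eqref{eq:SDminus} and \eqref{eq:Lbound}. You get the cell bound by differencing \eqref{eq:SDplus} at consecutive points (the paper does this at $t=k$ and $t=k+1$ before multiplying by $R^{-1/4}$) and treat $k=0$ directly, as the paper does. The endpoint-atom term you leave unbounded is in fact zero for $k\ge1$, since $e^k$ is never an integer, so the increment of $N_R$ is exactly $\nu_R(I_k)$.
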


\begin{proof}
The convergence of distribution functions follows from \eqref{eq:SDplus}:
for fixed $u>0$,
\[
 \nu_R([0,u])
 =R^{-1/4}\sum_{n\le e^{Ru}}\frac{\tau_{1/4}(n)}n
 =\frac{u^{1/4}}{\Gamma(5/4)}+O(R^{-1}u^{-3/4}).
\]
The assertions about $W_R$ follow from \eqref{eq:SDminus} and
\eqref{eq:Lbound}.  For $k\ge1$, subtracting \eqref{eq:SDplus} at $t=k+1$ and
$t=k$ gives
\[
 \sum_{e^k<n\le e^{k+1}}\frac{\tau_{1/4}(n)}n\ll k^{-3/4}.
\]
After multiplication by $R^{-1/4}$ this is \eqref{eq:cellmass}; the case $k=0$
is immediate.
\end{proof}

Let
\[
 E=\{13,14,23,24\},
\]
and, for $\boldsymbol q=(q_e)_{e\in E}$, put
\[
 Q_1=q_{13}q_{14},\quad Q_2=q_{23}q_{24},\quad
 Q_3=q_{13}q_{23},\quad Q_4=q_{14}q_{24}.
\]
Define the polar model
\begin{equation}
 D_0(\mathbf x)=
 \prod_{i=1}^4\zeta(1+x_i)^{-1/2}
 \prod_{\substack{i\in\{1,2\}\\j\in\{3,4\}}}
 \zeta(1+x_i+x_j)^{1/4}.
 \label{eq:D0}
\end{equation}
If $S_0(Y_1,Y_2,Y_3,Y_4)$ denotes the sharp box sum of the Dirichlet
coefficients of $D_0$, then expanding first the four cross factors gives the
exact identity
\begin{equation}
 S_0(\mathbf Y)=
 \sum_{\substack{q_e\ge1\\Q_i\le Y_i\ (1\le i\le4)}}
 \prod_{e\in E}\frac{\tau_{1/4}(q_e)}{q_e}
 \prod_{i=1}^4 L(Y_i/Q_i).
 \label{eq:S0conv}
\end{equation}

For $\mathbf b=(b_1,b_2,b_3,b_4)$ with positive coordinates, let
$\mathcal P(\mathbf b)$ be the polytope of $\boldsymbol u=(u_e)_{e\in E}\ge0$
for which
\begin{align*}
 c_1&=b_1-u_{13}-u_{14}\ge0,&
 c_2&=b_2-u_{23}-u_{24}\ge0,\\
 c_3&=b_3-u_{13}-u_{23}\ge0,&
 c_4&=b_4-u_{14}-u_{24}\ge0.
\end{align*}
Set
\begin{equation}
 \mathfrak C(\mathbf b)=
 \frac{1}{\Gamma(1/4)^4\pi^2}
 \int_{\mathcal P(\mathbf b)}
 \prod_{e\in E}u_e^{-3/4}
 \prod_{i=1}^4c_i^{-1/2}\dd\boldsymbol u.
 \label{eq:Cb}
\end{equation}
\begin{lemma}
\label{lem:polyhedral-ui}
The integral in \eqref{eq:Cb} is locally finite.  Moreover, on every compact
neighbourhood of $\mathbf1$ its integrand has a common integrable majorant
after the natural translation of the moving faces; consequently
$\mathfrak C(\mathbf b)$ is continuous there.
\end{lemma}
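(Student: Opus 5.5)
The plan is to handle local finiteness by scaling and then to obtain continuity by dominated convergence after a change of variables that freezes the moving faces. First, homogeneity: replacing $\boldsymbol u$ by $t\boldsymbol u$ and $\mathbf b$ by $t\mathbf b$ multiplies the integrand by $t^{-3-2}=t^{-5}$ and the volume element by $t^4$. Hence $\mathfrak C(t\mathbf b)=t^{-1}\mathfrak C(\mathbf b)$, and finiteness at any positive $\mathbf b$ reduces to finiteness on a bounded set of shapes. For finiteness itself I would copy the Tonelli reduction of Theorem~\ref{thm:C}. Fix $y=b_1-u_{14}$ and $z=b_3-u_{23}$, and take the remaining side parameters $b_1-u_{14}$ and $b_4-u_{14}$, $b_2-u_{23}$ and $b_3-u_{23}$. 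The $u_{13}$ integral is then $\int_0^{\min}t^{-3/4}\{(y-t)(z-t)\}^{-1/2}\,dt$, and similarly for $u_{24}$. Each is bounded by $m^{-1/4}M^{-1/2}B\,F(m/M)$ as in the proof of Theorem~\ref{thm:C}, and $F(r)\ll 1+\log^+\!\big(1/(1-r)\big)$ near $r=1$, since ${}_2F_1(\tfrac12,\tfrac14;\tfrac34;\cdot)$ has $c-a-b=0$. The two outer integrals over $u_{14}$ and $u_{23}$ then carry the singularities $u^{-3/4}$, a power $-\tfrac14$ or $-\tfrac12$ at the moving endpoint, and at most logarithmic factors. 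All exponents stay strictly above $-1$, except the $F^2$ logarithm paired with $(1-t)^{-1/2}$ in \eqref{eq:reduce1}, which is still integrable. So the total is finite, with bounds uniform for $\mathbf b$ in a compact neighbourhood of $\mathbf 1$.

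For continuity, the ``natural translation of the moving faces'' means making the domain fixed. I would substitute $u_{13}=s_{13}\min(b_1,b_3)$-type rescalings, or more cleanly write each $u_e$ as a fraction of the available room. In order, put $u_{14}=b'\,v_{14}$ with $b'=\min(b_1,b_4)$, then $u_{23}=\min(b_2,b_3)\,v_{23}$, then $u_{13}=\min(b_1-u_{14},b_3-u_{23})\,v_{13}$ and $u_{24}$ likewise, so that $\boldsymbol v\in[0,1]^4$. The Jacobian and the new integrand are continuous in $\mathbf b$ for a.e.\ fixed $\boldsymbol v$. With the $c_i$ expressed in $\boldsymbol v$, the integrand is bounded by a constant times the same product of powers $v^{-3/4}(1-v)^{-1/2}$ and ratio factors that appeared in the finiteness step. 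The constant comes from $b_i\in[1-\epsilon,1+\epsilon]$, so the ratios $\min/\max$ stay in $[\tfrac{1-\epsilon}{1+\epsilon},1]$ or the powers simply absorb them. That bound is the common integrable majorant, and dominated convergence gives continuity of $\mathfrak C$ near $\mathbf 1$.

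The main obstacle is the majorant near the corners where two faces $c_i=0$, $c_j=0$ meet a coordinate face $u_e=0$. There the factors $c_i^{-1/2}c_j^{-1/2}$ can combine into a nonintegrable $(\cdot)^{-1}$ unless the geometry separates them, and after the min-rescaling the two $c$'s sharing $u_{13}$ vanish simultaneously only when $y=z$. I would handle this exactly as in Theorem~\ref{thm:C}: the coincidence produces only the logarithmic blow-up of $F$ at $1$. So I would bound the majorant by $v^{-3/4}(1-v)^{-1/2}\{1+\log^+(1/|y-z|)\}^2$ and check that the final one-dimensional integral is finite via the $t^{-1/2}(1-t)^{-1/2}$ computation behind \eqref{eq:K}, using that $\log^2$ times $(1-t)^{-1/2}$ is integrable.
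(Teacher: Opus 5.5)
Your finiteness step follows the paper's route: Tonelli in $u_{13},u_{24}$, Euler's integral for $g$, and the logarithmic bound for the zero-balanced ${}_2F_1$. It is sound, with one correction. Near $y=z=0$ the two $g$-factors together are of size $m^{-1/2}M^{-1}$ up to logarithms, so it is not true that every exponent exceeds $-1$. Integrability there comes from the two-dimensional degree count $-\tfrac32>-2$, which the substitution $z=yt$ behind \eqref{eq:reduce1} (or the paper's $y=zr$) makes visible.

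The genuine gap is the continuity step. The bound $v^{-3/4}(1-v)^{-1/2}\{1+\log^+(1/|y-z|)\}^2$ is not a common majorant, because $y=b_1-\min(b_1,b_4)v_{14}$ and $z=b_3-\min(b_2,b_3)v_{23}$ depend on $\mathbf b$. Nor is it a pointwise bound for the four-dimensional integrand, since the logarithm only appears after the $v_{13}$-integration. Pointwise, your rescaling turns the $u_{13}$-part into $m^{-1/4}M^{-1/2}v_{13}^{-3/4}(1-v_{13})^{-1/2}(1-rv_{13})^{-1/2}$, with $r=\min(y,z)/\max(y,z)$. In these coordinates no common majorant exists at all. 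On the subfamily $b_4=b_1$, $b_2=b_3$ one has $r=1$ exactly when $(1-v_{23})/(1-v_{14})=b_1/b_3$. As $\mathbf b$ ranges over any neighbourhood of $\mathbf 1$, this locus sweeps a band of positive area in $(v_{14},v_{23})$. On that band the supremum over $\mathbf b$ of the integrand is at least a constant times $(1-v_{13})^{-1}$, which is not integrable in $v_{13}$. Your remark that the ratios stay near $1$ therefore points the wrong way: $r\to1$ is precisely the bad case. The rescaling freezes the domain, but not the set $\{v_{13}=1,\ r=1\}$ where $c_1$ and $c_3$ vanish together, and that set moves with $\mathbf b$.

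The conclusion is still true, but it needs uniform integrability rather than domination. You already have a.e.\ convergence of the rescaled integrands $f_{\mathbf b}$ on $[0,1]^4$. By Vitali's theorem it then suffices to show $\sup_{\mathbf b}\int_{[0,1]^4}f_{\mathbf b}^{\,p}<\infty$ for some $p\in(1,\tfrac43)$, which goes as follows.
\begin{enumerate}
\item The $v_{13}$-integral of the $p$th power is $\ll 1+(1-r)^{1-p}=1+M^{p-1}|y-z|^{1-p}$.
\item The $u_{24}$-part gives the same bound, with a parallel singular line displaced by $\delta=(b_4-b_1)-(b_2-b_3)$.
\item Since $|x|^{1-p}|x+\delta|^{1-p}\le\tfrac12(|x|^{2-2p}+|x+\delta|^{2-2p})$, the product is integrable across both lines uniformly in $\delta$ when $p<\tfrac32$.
\item Near $y=z=0$ the degree count becomes $-\tfrac32p>-2$, and the factors $v_e^{-3p/4}$ are integrable for $p<\tfrac43$.
\end{enumerate}

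The paper's proof takes the same route and states this step in a single sentence about translating the moving faces. In the slicing coordinates of its reduction the logarithmic line also moves with $\mathbf b$, so this uniform-integrability argument is the detail needed there as well.
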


\begin{proof}
It is enough first to treat $\mathbf b=\mathbf1$.  Put
\[
 g(y,z)=\int_0^{\min(y,z)}
 t^{-3/4}(y-t)^{-1/2}(z-t)^{-1/2}\dd t.
\]
Integrating successively in $u_{13}$ and $u_{24}$ reduces the four-dimensional
integral, up to its fixed normalising factor, to
\[
 \int_0^1\!\int_0^1
 u^{-3/4}v^{-3/4}g(1-u,1-v)^2\,du\,dv.
\]
If $m=\min(y,z)$ and $M=\max(y,z)$, Euler's integral gives
\[
 g(y,z)=B(\tfrac14,\tfrac12)m^{-1/4}M^{-1/2}
 {}_2F_1(\tfrac12,\tfrac14;\tfrac34;m/M).
\]
The hypergeometric function is zero-balanced, and hence
\[
 g(y,z)\ll m^{-1/4}M^{-1/2}
 \left(1+\log^+\frac{M}{|M-m|}\right).
\]
Consider, by symmetry, the region $0<y\le z<1$.  Near $y=z=0$ the resulting
majorant is
\[
 y^{-1/2}z^{-1}
 \left(1+\log\frac{z}{z-y}\right)^2,
\]
whose $y$-integral is $O(z^{1/2})$ after the substitution $y=zr$; the
remaining $z^{-1/2}$ singularity is integrable.  Near $y=z=1$, the only power
singularities are $(1-y)^{-3/4}(1-z)^{-3/4}$, and the logarithmic singularity
along $y=z$ is locally integrable.  The mixed boundary regions are easier.
This proves local finiteness.

If $\mathbf b$ ranges in a sufficiently small compact neighbourhood of
$\mathbf1$, translate each moving face by the corresponding change in
$b_i$.  The preceding estimates remain valid with uniform constants and on a
fixed slightly enlarged compact domain.  They therefore provide a common
integrable majorant.  Dominated convergence proves the continuity of
$\mathfrak C(\mathbf b)$.
\end{proof}

In particular,
\begin{equation}
 \mathfrak C(1,1,1,1)=\Cres.
 \label{eq:CbCres}
\end{equation}

\begin{proposition}
\label{prop:polartransfer}
Uniformly for $\mathbf b$ in a sufficiently small fixed compact neighbourhood
of $(1,1,1,1)$,
\begin{equation}
 S_0(e^{Rb_1},e^{Rb_2},e^{Rb_3},e^{Rb_4})
 =\frac{\mathfrak C(\mathbf b)}R+o(R^{-1}).
 \label{eq:polartransfer}
\end{equation}
\end{proposition}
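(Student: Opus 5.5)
We start from the exact convolution identity \eqref{eq:S0conv} and rewrite it as an integral against the product measure $\nu_R^{\otimes4}$. Put $u_e=\log q_e/R$. The summation condition $Q_i\le Y_i$ then becomes the constraint $c_i=b_i-(\text{sum of the two }u_e\text{ attached to }i)\ge0$, which is exactly the polytope $\mathcal P(\mathbf b)$. Each tail factor is
\[
L(Y_i/Q_i)=L(e^{Rc_i})=R^{-1/2}W_R(c_i).
\]
The normalisations combine as $R^{-1/4}$ per edge, giving $R^{4\cdot1/4}=R$ overall, and $R^{-1/2}$ per vertex, giving $R^{-2}$ overall. Hence
\[
 R\,S_0(e^{R\mathbf b})=\int_{\mathcal P(\mathbf b)}\prod_{i=1}^4W_R(c_i(\boldsymbol u))\,d\nu_R^{\otimes4}(\boldsymbol u),
\]
and the statement becomes the claim that this integral tends to $\mathfrak C(\mathbf b)$ uniformly near $\mathbf 1$. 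Formally this is clear from Lemma~\ref{lem:logmeasure}. The weak limit of $\nu_R^{\otimes4}$ is $\Gamma(1/4)^{-4}\prod u_e^{-3/4}\,d\boldsymbol u$, and $W_R(c)\to c^{-1/2}/\sqrt\pi$. Together these produce the constant $\Gamma(1/4)^{-4}\pi^{-2}$ in \eqref{eq:Cb}.

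To justify the limit, fix a small $\varepsilon>0$ and split $\mathcal P(\mathbf b)$ into two pieces. The \emph{core} is where every $u_e\ge\varepsilon$ and every $c_i\ge\varepsilon$. The \emph{boundary layer} is the rest. On the core, $W_R\to\pi^{-1/2}c^{-1/2}$ uniformly and the limit is continuous and bounded. The indicator of the core has a boundary of Lebesgue measure zero. Weak convergence of $\nu_R^{\otimes4}$, which follows from that of each factor, then gives convergence of the core integral to the corresponding piece of $\mathfrak C(\mathbf b)$. Uniformity in $\mathbf b$ follows from equicontinuity: the $c_i$ depend affinely on $\mathbf b$, and Lemma~\ref{lem:polyhedral-ui} gives continuity of the limit.

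The main obstacle is the boundary layer, which must be shown to contribute $O(\eta(\varepsilon))$ with $\eta(\varepsilon)\to0$ uniformly in $R$ and $\mathbf b$. Here I would replace everything by absolute values and discretise into cells $I_k$ of width $1/R$. The cell-mass bound \eqref{eq:cellmass} dominates $\nu_R$ on each cell by $\int_{I_k}(u+R^{-1})^{-3/4}\,du$ up to a constant. The bound $|W_R(c)|\ll(c+R^{-1})^{-1/2}$ changes by only bounded factors across neighbouring cells. Together these dominate the discrete sum by a constant times
\[
 \int_{\mathcal P(\mathbf b+O(R^{-1}))\cap\{\text{layer}\}}\prod_e(u_e+R^{-1})^{-3/4}\prod_i(c_i+R^{-1})^{-1/2}\,d\boldsymbol u.
\]
This integral is in turn bounded by the $\mathfrak C$-integrand over a slightly enlarged polytope. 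Dropping the $R^{-1}$ only increases the integrand, and the translation of the faces is controlled as in Lemma~\ref{lem:polyhedral-ui}. That lemma provides a common integrable majorant on a fixed enlarged domain, so the layer contribution is at most the majorant's integral over a set of Lebesgue measure $O(\varepsilon)$. By absolute continuity this tends to zero as $\varepsilon\to0$, uniformly in $R$ and in $\mathbf b$.

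The delicate point is the cells where a $c_i$ is at scale $R^{-1}$. There the shift by $R^{-1}$ in the faces must be absorbed, and the correlations among the four $c_i$, which share edges, must not produce a non-integrable combination. I expect the majorant argument of Lemma~\ref{lem:polyhedral-ui}, run with $\mathbf b$ replaced by $\mathbf b+O(R^{-1})$, to handle this. Letting $R\to\infty$ and then $\varepsilon\to0$ gives \eqref{eq:polartransfer}.
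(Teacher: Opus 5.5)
Your proposal is correct and follows essentially the same route as the paper. Both arguments run through the same three steps:
\begin{enumerate}
 \item the exact rescaled identity \eqref{eq:exactscaled};
 \item convergence on the $\varepsilon$-core, from weak convergence of $\nu_R^{\otimes4}$ and local uniform convergence of $W_R$;
 \item uniform integrability of the boundary layer, obtained by comparing the mesh-cell sums (via \eqref{eq:cellmass} and $|W_R(c)|\ll(c+R^{-1})^{-1/2}$) with the $\mathfrak C$-integrand on the $O(R^{-1})$-enlarged polytope, and then invoking Lemma~\ref{lem:polyhedral-ui} and absolute continuity.
\end{enumerate}
The only difference is cosmetic. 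You dominate each cell's $\nu_R$-mass by $\int_{I_k}(u+R^{-1})^{-3/4}\,du$ where the paper uses upper-corner sub-boxes of side $1/(8R)$, and both serve the same purpose.
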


\begin{proof}
From \eqref{eq:S0conv}, \eqref{eq:nuR}, and \eqref{eq:WR}, one obtains the
exact rescaled identity
\begin{equation}
 R S_0(e^{Rb_1},e^{Rb_2},e^{Rb_3},e^{Rb_4})
 =\int_{\mathcal P(\mathbf b)}
   \prod_{i=1}^4W_R(c_i)\dd\nu_R^{\otimes4}(\boldsymbol u).
 \label{eq:exactscaled}
\end{equation}
We justify passage to the limit, including all boundary faces.

Fix $\varepsilon>0$ and first restrict to the compact subset on which
$u_e\ge\varepsilon$ for every $e$ and $c_i\ge\varepsilon$ for every $i$.
Lemma~\ref{lem:logmeasure}, weak convergence of product measures, and local
uniform convergence of $W_R$ give the required limit there, uniformly for
$\mathbf b$ in the stated compact set.

It remains to prove uniform integrability.  Partition each $u_e$-axis into the
intervals $I_k$ of Lemma~\ref{lem:logmeasure}.  By \eqref{eq:cellmass}, the
$\nu_R^{\otimes4}$-mass of a mesh box $B=\prod_e I_{k_e}$ is at most a constant
times
\[
 R^{-4}\prod_{e\in E}(u_{e,B}+R^{-1})^{-3/4},
\]
where $u_{e,B}=k_e/R$.  On a box meeting $\mathcal P(\mathbf b)$,
Lemma~\ref{lem:logmeasure} bounds the kernel by a constant times
\[
 \prod_{i=1}^4(c_{i,B}^{-}+R^{-1})^{-1/2},
 \qquad
 c_{i,B}^{-}=
 \max\!\left(0,b_i-\sum_{e\ni i}\frac{k_e+1}{R}\right).
\]
Consequently the contribution of any collection $\mathcal B$ of such mesh
boxes is bounded by a constant times the corresponding singular Riemann sum.
To compare this sum with an integral, take in every mesh box a fixed sub-box of
side $1/(8R)$ adjacent to its upper corner.  On that sub-box all coordinate
factors and all regularised deficit factors are comparable, with constants
independent of the box, $R$, and $\mathbf b$.  The sub-boxes are disjoint.  It
follows that
\begin{align}
 &\int_{\bigcup\mathcal B\cap\mathcal P(\mathbf b)}
  \prod_i|W_R(c_i)|\dd\nu_R^{\otimes4}
 \notag\\
 &\quad\ll
 \int_{\mathcal N_{3/R}(\bigcup\mathcal B)\cap
             \mathcal P(\mathbf b+3R^{-1}\mathbf1)}
 \prod_{e\in E}u_e^{-3/4}
 \prod_{i=1}^4
 \left(b_i+3R^{-1}-\sum_{e\ni i}u_e\right)^{-1/2}
 \dd\boldsymbol u.
 \label{eq:gridcompare}
\end{align}
Here $\mathcal N_{3/R}$ denotes a $3/R$-neighbourhood.  The enlargement by
$3/R$ merely ensures that every regularised mesh factor is dominated by the
corresponding unregularised factor on the enlarged polytope.

The right-hand side of \eqref{eq:gridcompare} is uniformly bounded by
Lemma~\ref{lem:polyhedral-ui}.  More importantly, if $\mathcal B$ consists of boxes meeting one
of the sets $u_e<\varepsilon$ or $c_i<\varepsilon$, then the right-hand side
tends to zero uniformly as $\varepsilon\downarrow0$, after first taking
$R\to\infty$.  This is simply the absolute continuity of the finite integrals
\eqref{eq:Cb}; the $3/R$ enlargement disappears in the limit.  Thus the family
of integrands in \eqref{eq:exactscaled} is uniformly integrable.

Combining the interior convergence with this boundary estimate gives
\[
 R S_0(e^{R\mathbf b})\longrightarrow\mathfrak C(\mathbf b)
\]
uniformly in the stated neighbourhood, proving \eqref{eq:polartransfer}.
\end{proof}

We next restore the regular Euler product $\mathcal A$ from
Proposition~\ref{prop:factor}.

\begin{lemma}
\label{lem:Acoeff}
There is $\eta>0$ and coefficients $a(\mathbf r)$, indexed by
$\mathbf r=(r_1,r_2,r_3,r_4)\in\mathbb N^4$, such that
\begin{equation}
 \mathcal A(\mathbf x)=
 \sum_{\mathbf r\in\mathbb N^4}
 \frac{a(\mathbf r)}{r_1^{x_1}r_2^{x_2}r_3^{x_3}r_4^{x_4}},
 \qquad
 \sum_{\mathbf r}|a(\mathbf r)|(r_1r_2r_3r_4)^\eta<\infty.
 \label{eq:Aseries}
\end{equation}
Moreover,
\begin{equation}
 \sum_{\mathbf r}a(\mathbf r)=\mathcal A(\mathbf0)=1.
 \label{eq:Asum}
\end{equation}
\end{lemma}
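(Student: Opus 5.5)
The plan is to construct $\mathcal A$ prime by prime as a four-variable Dirichlet series, and to control it with the same local estimate that gave the factorisation in Proposition~\ref{prop:factor}. The natural object is the generalised local factor in which $p^{-x_i}$ is replaced by an independent formal variable attached to the $i$th coordinate. Thus $L_p(\mathbf x)$ in \eqref{eq:Lp} is already a four-variable power series in $(p^{-x_1},\dots,p^{-x_4})$, with coefficients of size $O(p^{-\max(i+j,i'+j')})$ times binomial bounds $|\lambda(p^k)|\le1$. The reciprocal singular factors are $\zeta(1+x_i)^{1/2}$ and $\zeta(1+x_i+x_j)^{-1/4}$. Their local parts $(1-p^{-1-x_i})^{-1/2}$ and $(1-p^{-1-x_i-x_j})^{1/4}$ are likewise power series in these monomials. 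The mixed factor contributes to the coordinates $r_i$ and $r_j$ simultaneously, through the monomial $p^{-k(x_i+x_j)}$. Multiplying these series gives $\mathcal A_p(\mathbf x)=\sum_{\mathbf k\in\mathbb N^4}\alpha_p(\mathbf k)\,p^{-k_1x_1-\dots-k_4x_4}$. This is a Dirichlet series whose coefficients $a(\mathbf r)$ are supported on $\mathbf r=(p^{k_1},\dots,p^{k_4})$.

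The key step is a uniform local bound. I would establish
\[
 \sum_{\mathbf k\ne\mathbf0}|\alpha_p(\mathbf k)|\,p^{\eta(k_1+k_2+k_3+k_4)}\ll p^{-3/2}
\]
for small $\eta$. The proof of Proposition~\ref{prop:factor} shows that the first-order terms cancel exactly, so $\alpha_p(\mathbf k)=0$ whenever $|\mathbf k|=1$. For the mixed monomials with one index from each side, the cancellation is between $\tfrac14p^{-1}$ in $L_p$ and $-\tfrac14p^{-1}$ from the reciprocal cross factor. Every surviving term carries a factor of at least $p^{-2}$, up to the unweighted sizes. There is one subtlety. Terms like $p^{-ix_1}$ with large $i$ alone carry only $p^{-i}$ from the max, and the reciprocal factor $(1-p^{-1}y)^{-1/2}$ has coefficients $\asymp p^{-k}$. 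Both decay geometrically in $|\mathbf k|$ at rate $p^{-1}$, up to $p^{\eta}$. So I would bound the absolute series by the same product with all coefficients replaced by absolute values and evaluated at $|p^{-x_i}|=p^{\eta}$. This is legitimate by majorant series. The total is then $1+O(p^{-2+C\eta})$. Subtracting the exactly cancelled degree-one part, the remainder is $O(p^{-2+C\eta})$, which is $\le p^{-3/2}$ once $\eta$ is small.

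Next comes the globalisation. Since $\mathbf r\mapsto a(\mathbf r)$ is multiplicative in the sense that $a=\prod_p\alpha_p$ over the $p$-parts of $\mathbf r$, I would write
\[
 \sum_{\mathbf r}|a(\mathbf r)|(r_1r_2r_3r_4)^\eta=\prod_p\Bigl(1+\sum_{\mathbf k\ne\mathbf 0}|\alpha_p(\mathbf k)|p^{\eta|\mathbf k|}\Bigr)\le\prod_p(1+Cp^{-3/2})<\infty.
\]
This gives absolute convergence in \eqref{eq:Aseries}. It also shows the series equals the Euler product $\mathcal A$ on $\Real x_i>-\eta$, by uniqueness of the normally convergent product from Proposition~\ref{prop:factor}. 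Evaluating at $\mathbf x=\mathbf0$, which lies inside the region of absolute convergence, gives $\sum_{\mathbf r}a(\mathbf r)=\mathcal A(\mathbf0)=1$, which is \eqref{eq:Asum}.

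The main obstacle is the local majorant step. There the absolute-value bound must not destroy the degree-one cancellation. I would handle this by separating the exactly computable degree-one coefficients, which vanish, from the remainder, and majorising only the degree $\ge2$ part. That part is visibly $O(p^{-2+C\eta})$, because every monomial of total degree $k\ge2$ in each factor carries at least $p^{-\lceil k/2\rceil}\cdot p^{-1}$ or better. A small case check is needed for the cross terms such as $p^{-x_1-x_2}$ with $i=j=1$. These carry $p^{-2}$ from the max, so they are fine.
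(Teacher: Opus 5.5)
Your overall plan is the paper's: a weighted $\ell^1$ bound of size $O(p^{-2+C\eta})$ for the non-constant local coefficients, then convergence of the Euler product in that norm, then evaluation at $\mathbf x=\mathbf 0$ inside the region of absolute convergence. The globalisation and the final step are fine.

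\textbf{The gap.} The local majorant fails as you describe it. You split by total monomial degree $|\mathbf k|$ and use only that the $|\mathbf k|=1$ coefficients vanish. You then majorise the $|\mathbf k|\ge2$ part coefficientwise by absolute values. The trouble is the bipartite monomials $p^{-x_r-x_s}$ with $r\in\{1,2\}$, $s\in\{3,4\}$. They have $|\mathbf k|=2$, yet they occur with coefficient $+\tfrac14p^{-1}$ in $L_p$ (since $\max(1,1)=1$) and $-\tfrac14p^{-1}$ in $(1-p^{-1-x_r-x_s})^{1/4}$. You noted this cancellation in your second paragraph, but replacing every coefficient by its absolute value destroys it. The majorant of the degree-$\ge2$ part then contains $4\cdot\tfrac12p^{-1+2\eta}$. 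So you obtain only $1+O(p^{-1+C\eta})$ locally, and $\prod_p(1+Cp^{-1+C\eta})$ diverges.

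Several of your claims fail for the same reason:
\begin{itemize}
\item The claim that a degree-$k$ monomial carries $p^{-\lceil k/2\rceil}\cdot p^{-1}$ is false for $L_p$. The coefficient of $p^{-x_1-x_3}$ is $\tfrac14p^{-1}$, and that of $p^{-x_1-x_2-x_3-x_4}$ is $\tfrac1{16}p^{-2}$.
\item The claim that the absolute-value product is $1+O(p^{-2+C\eta})$ is also false.
\item Your case check addresses the same-side monomials $p^{-x_1-x_2}$, which are already $O(p^{-2})$. The bipartite ones are the ones that need care.
\end{itemize}

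\textbf{The repair.} Grade by the power of $p^{-1}$ rather than by $|\mathbf k|$; this is what ``first order'' means in Proposition~\ref{prop:factor}. There are nine local factors: $L_p$, the four $(1-p^{-1-x_i})^{-1/2}$, and the four $(1-p^{-1-x_i-x_j})^{1/4}$. Write each as $1+E_1+E_{\ge2}$. Here $E_1$ is the part of exact order $p^{-1}$, namely the monomials $p^{-x_r}$ and the bipartite $p^{-x_r-x_s}$, and $E_{\ge2}$ is the rest. Let $|E|$ denote a series with its coefficients replaced by their absolute values.

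The first-order computation in Proposition~\ref{prop:factor} says precisely that the nine $E_1$ sum to zero identically as power series. Hence, for $\mathbf k\ne\mathbf0$, the coefficient $\alpha_p(\mathbf k)$ comes only from products containing some $E_{\ge2}$ or at least two $E_1$. This gives
\[
 \sum_{\mathbf k\ne\mathbf 0}|\alpha_p(\mathbf k)|p^{\eta|\mathbf k|}
 \le\Bigl[\prod\bigl(1+|E_1|+|E_{\ge2}|\bigr)-1-\sum|E_1|\Bigr]_{|p^{-x_i}|=p^{\eta}}
 \ll p^{-2+4\eta}.
\]
The last step uses $|E_1|\ll p^{-1+2\eta}$ and $|E_{\ge2}|\ll p^{-2+4\eta}$. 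For $L_p$ the latter holds because there are $O(w^3)$ monomials with $\max(i+j,i'+j')=w$, each of weighted size at most $p^{-w(1-2\eta)}$. Every local sum, including those at small primes, is finite once $\eta<\tfrac12$.

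With this substitution the rest of your argument goes through unchanged. It is then the paper's proof, which compresses exactly this step into the phrase ``after the first-order local terms have been divided out''.
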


\begin{proof}
The proof of Proposition~\ref{prop:factor} gives, after the first-order local
terms have been divided out,
\[
 \mathcal A_p(\mathbf x)=1+O(p^{-2+C\eta})
\]
uniformly for $|\Real x_i|\le\eta$, once $\eta$ is small enough.  Expanding the
local binomial factors and the absolutely convergent local sum \eqref{eq:Lp}
shows more precisely that the sum of the absolute values of the non-constant
local Dirichlet coefficients, weighted by $p^{\eta(\nu_1+\cdots+\nu_4)}$, is
$O(p^{-2+C\eta})$.  The product over $p$ therefore converges in the weighted
$\ell^1$ coefficient norm.  This gives \eqref{eq:Aseries}.  Absolute convergence
at $\mathbf x=\mathbf0$ and Proposition~\ref{prop:factor} give
\eqref{eq:Asum}.
\end{proof}

\begin{theorem}
\label{thm:transfer}
As $U\to\infty$,
\begin{equation}
 \displaystyle
  \Sdiag=\frac{\Cres}{\log U}
  +o\!\left(\frac1{\log U}\right).
 \label{eq:transferproved}
\end{equation}
\end{theorem}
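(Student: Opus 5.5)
We write $\Sdiag$ as a sharp box sum of the Dirichlet coefficients of $D=\mathcal A D_0$, and then transfer Proposition~\ref{prop:polartransfer} through the absolutely convergent regular factor of Lemma~\ref{lem:Acoeff}. By Lemma~\ref{lem:gcd} and the definition \eqref{eq:Ddef},
\[
 \Sdiag=\sum_{\ell_1,m_1,\ell_2,m_2\le U}
 \frac{\lambda(\ell_1)\lambda(m_1)\lambda(\ell_2)\lambda(m_2)(\ell_1m_1,\ell_2m_2)}{\ell_1m_1\ell_2m_2}.
\]
This is the box sum $S(U,U,U,U)$ of the coefficients of $D(\mathbf x)$, with each variable cut off at $U$. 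Proposition~\ref{prop:factor} gives $D=\mathcal A\cdot D_0$ as a formal identity of four-variable Dirichlet series, valid in $\Real x_i>0$. Comparing coefficients, the coefficient of $D$ at $(n_1,\dots,n_4)$ is the four-variable Dirichlet convolution of $a(\mathbf r)$ with the coefficients of $D_0$. Summing over the box $n_i\le U$ therefore yields the exact identity
\[
 \Sdiag=\sum_{\mathbf r}a(\mathbf r)\,S_0\!\left(\frac U{r_1},\frac U{r_2},\frac U{r_3},\frac U{r_4}\right).
\]
This rearrangement is justified once we know $\sum_{\mathbf r}|a(\mathbf r)|\sup|S_0|<\infty$; that requires a uniform bound on $S_0$, which we supply below.

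Put $R=\log U$, so that $U/r_i=e^{Rb_i}$ with $b_i=1-\log r_i/R$. Split the $\mathbf r$-sum at $\max_i r_i\le U^{\epsilon}$. For the small $\mathbf r$, $\mathbf b$ lies in an $\epsilon$-neighbourhood of $\mathbf 1$, so Proposition~\ref{prop:polartransfer} applies uniformly and gives $RS_0=\mathfrak C(\mathbf b)+o(1)$. For fixed $\mathbf r$ we have $\mathbf b\to\mathbf1$, and Lemma~\ref{lem:polyhedral-ui} (continuity) gives $\mathfrak C(\mathbf b)\to\Cres$. Since $\sum|a(\mathbf r)|<\infty$, dominated convergence gives
\[
 \sum_{\max r_i\le U^\epsilon}a(\mathbf r)\,RS_0\to\Cres\sum_{\mathbf r}a(\mathbf r)=\Cres,
\]
using \eqref{eq:Asum}. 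Here we take $\epsilon$ small and then let $\epsilon\to0$ slowly, since the convergence in Proposition~\ref{prop:polartransfer} is uniform on the fixed neighbourhood.

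For the tail $\max r_i>U^\epsilon$ we need a crude uniform bound $R|S_0(\mathbf Y)|\ll R^{C}$, or even $\ll U^{\eta/2}$, valid for all $\mathbf Y$. Combined with
\[
 \sum_{\max r_i>U^\epsilon}|a(\mathbf r)|\le U^{-\epsilon\eta}\sum_{\mathbf r}|a(\mathbf r)|(r_1r_2r_3r_4)^\eta,
\]
such a bound makes the tail $o(1)$. To get the bound, take absolute values in \eqref{eq:S0conv}. Using \eqref{eq:Lbound}, $|L|\ll1$, and
\[
 \sum_{q\le Y}\frac{\tau_{1/4}(q)}{q}\ll(\log 2Y)^{1/4}
\]
from \eqref{eq:SDplus}, we obtain $|S_0(\mathbf Y)|\ll\prod_i(\log 2Y_i)$, a power of $R$. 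For $Y_i<1$ the sum is empty. That suffices.

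The main obstacle is the uniformity needed in the middle range. For moderately large $\mathbf r$, $\mathbf b$ may leave the compact neighbourhood where Proposition~\ref{prop:polartransfer} holds, and the sum over such $\mathbf r$ must still be negligible. The split at $U^\epsilon$ handles this: the weighted $\ell^1$ decay of $a(\mathbf r)$ from Lemma~\ref{lem:Acoeff} beats the polynomial-in-$R$ bound on $S_0$ outside the neighbourhood. One should check that $\epsilon$ can be chosen so that $\mathbf b$ with $\max r_i\le U^\epsilon$ stays inside the neighbourhood of Proposition~\ref{prop:polartransfer}, which holds because $|b_i-1|\le\epsilon$. With both ranges controlled, $R\,\Sdiag\to\Cres$, which is \eqref{eq:transferproved}.
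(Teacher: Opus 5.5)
Your proposal is correct and follows essentially the paper's proof. It uses the same three steps: the exact convolution identity $\Sdiag=\sum_{\mathbf r}a(\mathbf r)S_0(U/r_1,\dots,U/r_4)$; a crude bound on $S_0$, polynomial in $\log U$, which lets the weighted $\ell^1$ decay of $a(\mathbf r)$ kill the tail; and, on the main range, Proposition~\ref{prop:polartransfer} together with continuity of $\mathfrak C$ at $\mathbf 1$ and $\sum_{\mathbf r}a(\mathbf r)=1$.

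The differences are cosmetic. You cut at $U^\epsilon$ with $\epsilon$ small and use dominated convergence in $\mathbf r$; for this a fixed $\epsilon$ already suffices, and there is no need to let $\epsilon\to0$. The paper instead cuts at $e^{\sqrt{\log U}}$ and uses an $o(1)$ that is uniform in $\mathbf r$. Your use of $|L|\ll1$ also gives a slightly sharper crude bound than the paper's $R^3$.
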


\begin{proof}
Write $R=\log U$.  Let $b_0(\mathbf n)$ denote the Dirichlet coefficients of
$D_0$ in the variables $\mathbf x$, so that $S_0(\mathbf Y)$ is their box sum.
Equations \eqref{eq:factor} and \eqref{eq:Aseries} imply the exact coordinatewise
convolution identity
\begin{equation}
 \Sdiag=
 \sum_{\mathbf r}a(\mathbf r)
 S_0(U/r_1,U/r_2,U/r_3,U/r_4),
 \label{eq:Aconvolution}
\end{equation}
with the convention that $S_0(\mathbf Y)=0$ if some $Y_i<1$.

We first record a crude bound.  The Euler products at
$1+1/R$ give
\[
 \sum_{n\le U}\frac{|\lambda(n)|}{n}\ll R^{1/2},
 \qquad
 \sum_{n\le U}\frac{\tau_{1/4}(n)}n\ll R^{1/4}.
\]
Indeed, the first absolute-value Dirichlet series has first prime coefficient
$1/2$ and hence is $\ll\zeta(1+1/R)^{1/2}$, while the second is exactly
$\zeta(1+1/R)^{1/4}$; Rankin's trick removes the sharp cutoff.  From
\eqref{eq:S0conv}, uniformly for $1\le Y_i\le U$,
\begin{equation}
 |S_0(\mathbf Y)|\ll R^3.
 \label{eq:S0crude}
\end{equation}

Split \eqref{eq:Aconvolution} at
$\max_i r_i\le e^{\sqrt R}$.  By \eqref{eq:Aseries} and \eqref{eq:S0crude}, the
tail is
\[
 \ll R^3e^{-\eta\sqrt R}
 \sum_{\mathbf r}|a(\mathbf r)|(r_1r_2r_3r_4)^\eta
 =o(R^{-1}).
\]
In the remaining range put
\[
 b_i=1-\frac{\log r_i}{R}.
\]
Then $1-R^{-1/2}\le b_i\le1$.  Proposition~\ref{prop:polartransfer}, uniformly
in this range, and the continuity of $\mathfrak C$ at $\mathbf1$ give
\[
 R S_0(U/r_1,U/r_2,U/r_3,U/r_4)=\Cres+o(1),
\]
where the $o(1)$ is uniform in $\mathbf r$.  Therefore
\begin{align*}
 R\Sdiag
 &=\{\Cres+o(1)\}
   \!\!\sum_{\max r_i\le e^{\sqrt R}}\!\!\!\!a(\mathbf r)+o(1)\\
 &=\Cres+o(1)
\end{align*}
by \eqref{eq:Asum} and weighted absolute convergence.  This is
\eqref{eq:transferproved}.
\end{proof}

\begin{remark}
The proof does not require a multivariable Tauberian theorem for a signed
arithmetic function.  The four signed factors are treated separately by the
one-variable estimate \eqref{eq:SDminus}; after that separation, the four edge
measures in \eqref{eq:nuR} are positive.  This is why the non-negativity
restriction in the multivariable theorem of de la Bret\`eche~\cite{deLaBreteche}
is avoided.
\end{remark}

\section{Weighted power transfer}
\label{sec:weights}

For $a>0$ put
\[
 b_{n,a}(U)=\lambda(n)
 \left(\frac{\log(U/n)}{\log U}\right)^a\quad(n\le U)
\]
and let $S_a(U)$ denote \eqref{eq:Sb} for these coefficients.  Write
$R=\log U$ and
\[
 L_{a,R}(c)=\sum_{n\le e^{Rc}}\frac{\lambda(n)}n
 \left(c-\frac{\log n}{R}\right)^a,
 \qquad W_{a,R}(c)=R^{1/2}L_{a,R}(c).
\]
If $L(X)=\sum_{n\le X}\lambda(n)/n$, then Stieltjes summation gives
\begin{equation}\label{eq:weightedStieltjes}
 L_{a,R}(c)=a\int_0^cL(e^{R(c-v)})v^{a-1}\dd v.
\end{equation}
The one-variable estimate \eqref{eq:SDminus} and
\eqref{eq:Lbound} imply, locally uniformly for $c>0$,
\begin{equation}\label{eq:weightedlimit}
 W_{a,R}(c)\longrightarrow
 K_ac^{a-1/2},\qquad
 K_a=\frac{\Gamma(a+1)}{\Gamma(a+1/2)},
\end{equation}
and
\begin{equation}\label{eq:weightedbound}
 |W_{a,R}(c)|\ll_a(c+R^{-1})^{a-1/2}.
\end{equation}

On the polytope $\Omega$ of Section~\ref{sec:residue}, define
\begin{equation}\label{eq:Capower}
 \mathcal C_a=
 \frac{K_a^4}{\Gamma(1/4)^4}
 \int_\Omega
 (\tau_{13}\tau_{14}\tau_{23}\tau_{24})^{-3/4}
 (c_1c_2c_3c_4)^{a-1/2}\dd\boldsymbol\tau.
\end{equation}

\begin{lemma}
\label{lem:weighted-polar-uniform}
Let $\mathbf b=(b_1,b_2,b_3,b_4)$ lie in a fixed neighbourhood of
$\mathbf1$, and impose the coordinate caps $n_i\le e^{Rb_i}$ in the polar
model $D_0$.  Weight the $i$th coordinate by
\[
 \left(b_i-\frac{\log n_i}{R}\right)^a.
\]
Denote the resulting polar box sum by $S_{a,0,R}(\mathbf b)$.  Then, uniformly
for $\mathbf b\to\mathbf1$,
\begin{equation}\label{eq:weighted-polar-uniform}
 R S_{a,0,R}(\mathbf b)
 \longrightarrow
 \mathfrak C_a(\mathbf b),
\end{equation}
where
\[
 \mathfrak C_a(\mathbf b)=
 \frac{K_a^4}{\Gamma(1/4)^4}
 \int_{\mathcal P(\mathbf b)}
 \prod_{e\in E}u_e^{-3/4}
 \prod_{i=1}^4
 \left(b_i-\sum_{e\ni i}u_e\right)^{a-1/2}
 \dd\boldsymbol u.
\]
In particular $\mathfrak C_a(\mathbf1)=\mathcal C_a$.
\end{lemma}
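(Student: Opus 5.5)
The plan is to repeat the proof of Proposition~\ref{prop:polartransfer}, with the weighted tail $W_{a,R}$ in place of $W_R$. The cross factors in $D_0$ carry no weight, since the weights depend only on the four coordinates $n_i$. Expanding the cross factors first, exactly as in \eqref{eq:S0conv}, writes each coordinate as $n_i=Q_i k_i$, where $k_i$ runs over the $\lambda$-coefficients of $\zeta(1+x_i)^{-1/2}$. The weight is then
\[
\Bigl(b_i-\frac{\log Q_i}{R}-\frac{\log k_i}{R}\Bigr)^a .
\]
The $k_i$-sum is therefore exactly $L_{a,R}(c_i)$ with $c_i=b_i-\sum_{e\ni i}u_e$ and $u_e=\log q_e/R$. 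This gives the exact rescaled identity
\[
R\,S_{a,0,R}(\mathbf b)=\int_{\mathcal P(\mathbf b)}\prod_{i=1}^4 W_{a,R}(c_i)\,d\nu_R^{\otimes4}(\boldsymbol u),
\]
where the powers of $R$ balance as before: $R^{-1}$ from the four $R^{-1/4}$ factors in $\nu_R$ and $R^{-2}$ from the four $R^{1/2}$ factors in the $W$'s.

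For the interior step, fix $\varepsilon>0$ and restrict to $u_e\ge\varepsilon$ and $c_i\ge\varepsilon$. There Lemma~\ref{lem:logmeasure} gives weak convergence of $\nu_R^{\otimes4}$ to $\Gamma(1/4)^{-4}\prod_e u_e^{-3/4}\,d\boldsymbol u$. Together with the locally uniform limit \eqref{eq:weightedlimit}, $W_{a,R}(c)\to K_ac^{a-1/2}$, this yields the limiting integrand of $\mathfrak C_a(\mathbf b)$. The convergence is uniform for $\mathbf b$ near $\mathbf1$, because the truncated domain moves continuously with $\mathbf b$.

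Before the boundary step I need to confirm \eqref{eq:weightedlimit} and \eqref{eq:weightedbound} from \eqref{eq:weightedStieltjes}. Substituting $W_R$ gives
\[
W_{a,R}(c)=a\int_0^c W_R(c-v)\,v^{a-1}\,dv .
\]
The bound $|W_R(w)|\ll(w+R^{-1})^{-1/2}$ dominates the integrand by an integrable Beta-type kernel. Dominated convergence then gives the limit $a\,\pi^{-1/2}B(a,\tfrac12)c^{a-1/2}=K_ac^{a-1/2}$. Splitting at $v=c/2$ gives the bound $(c+R^{-1})^{a-1/2}$.

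The main obstacle is uniform integrability near the faces $u_e=0$ and $c_i=0$. I would use the mesh-box comparison \eqref{eq:gridcompare} with the exponent $-1/2$ on the deficit factors replaced by $a-1/2$. If $a\ge1/2$, the kernel is bounded and only the $u_e^{-3/4}$ singularities matter. These are handled by \eqref{eq:cellmass} and are integrable outright. If $0<a<1/2$, the factors $(c+R^{-1})^{a-1/2}$ are milder than in the sharp case, so the majorant constructed in Lemma~\ref{lem:polyhedral-ui} dominates them on the relevant compact region, up to constants. The $3/R$ enlargement argument then goes through unchanged. Absolute continuity of this finite majorant makes the boundary contribution small as $\varepsilon\downarrow0$, uniformly in $R$ large and in $\mathbf b$ near $\mathbf1$.

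Combining the interior and boundary steps gives $RS_{a,0,R}(\mathbf b)\to\mathfrak C_a(\mathbf b)$. Uniformity as $\mathbf b\to\mathbf1$ also requires continuity of $\mathfrak C_a$ at $\mathbf1$. This follows by the same translated-majorant dominated-convergence argument as in Lemma~\ref{lem:polyhedral-ui}. Finally, $\mathfrak C_a(\mathbf1)=\mathcal C_a$ by comparing the definitions, since $\mathcal P(\mathbf1)$ is the closure of $\Omega$.
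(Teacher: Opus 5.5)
Your proposal is correct and follows the paper's proof essentially step for step. You expand the four unweighted cross factors to get the exact identity $R\,S_{a,0,R}(\mathbf b)=\int_{\mathcal P(\mathbf b)}\prod_i W_{a,R}(c_i)\,d\nu_R^{\otimes4}$, take the interior limit from weak convergence of $\nu_R$ and the locally uniform limit of $W_{a,R}$, control the faces by the sharp-case mesh-box majorant (dominated by the $c_i^{-1/2}$ majorant when $0<a<1/2$, with bounded deficit factors when $a\ge 1/2$), and obtain continuity of $\mathfrak C_a$ at $\mathbf 1$ by the translated-majorant argument. Your explicit derivation of the limit $K_a c^{a-1/2}$ and of the bound $(c+R^{-1})^{a-1/2}$ from the Stieltjes identity is a useful addition, since the paper only asserts these two facts.
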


\begin{proof}
After expansion of the four positive cross factors, the edge variables are
again governed by the positive measures $\nu_R$ from
Proposition~\ref{prop:polartransfer}; the four coordinate tails are
$W_{a,R}(c_i)$.  On every compact subset of the interior,
\eqref{eq:weightedlimit} gives uniform convergence to
$K_ac_i^{a-1/2}$.  Equation \eqref{eq:weightedbound} gives the mesh majorant
\[
 \prod_{e\in E}(u_e+R^{-1})^{-3/4}
 \prod_{i=1}^4(c_i+R^{-1})^{a-1/2}.
\]
If $0<a<1/2$, this is bounded by a constant times the sharp majorant with
$c_i^{-1/2}$; if $a\ge1/2$, the deficit factors are locally bounded.  The
sub-box comparison used in \eqref{eq:gridcompare} therefore applies uniformly
for $\mathbf b$ near $\mathbf1$.  Boundary boxes meeting $u_e<\varepsilon$ or
$c_i<\varepsilon$ contribute $o_\varepsilon(1)$ uniformly after
$R\to\infty$, while the interior boxes converge as ordinary Riemann sums.
Letting $\varepsilon\downarrow0$ proves \eqref{eq:weighted-polar-uniform}.
Local dominated convergence also shows that $\mathfrak C_a(\mathbf b)$ is
continuous at $\mathbf1$.
\end{proof}

\begin{lemma}
\label{lem:weighted-A-convolution}
Let $a(\mathbf r)$ be the coefficients in Lemma~\ref{lem:Acoeff}.  Then
\begin{equation}\label{eq:weighted-A-exact}
 S_a(e^R)=
 \sum_{\mathbf r}a(\mathbf r)
 S_{a,0,R}\!\left(
 1-\frac{\log r_1}{R},\ldots,
 1-\frac{\log r_4}{R}
 \right),
\end{equation}
with the convention that a term is zero if some $r_i>e^R$.  Moreover,
\begin{equation}\label{eq:weighted-A-limit}
 R S_a(e^R)=\mathcal C_a+o(1).
\end{equation}
\end{lemma}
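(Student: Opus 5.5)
The plan is to mirror the proof of Theorem~\ref{thm:transfer}, with the weighted polar box sum $S_{a,0,R}$ in place of $S_0$ and Lemma~\ref{lem:weighted-polar-uniform} in place of Proposition~\ref{prop:polartransfer}.

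\emph{Step 1: the exact identity \eqref{eq:weighted-A-exact}.} Expand $S_a(e^R)$ as in Lemma~\ref{lem:gcd}. This writes it as a four-variable sum
\[
 \sum_{\ell_1,m_1,\ell_2,m_2}\lambda(\ell_1)\lambda(m_1)\lambda(\ell_2)\lambda(m_2)\,(\ell_1m_1,\ell_2m_2)/(\ell_1m_1\ell_2m_2),
\]
with the $i$th variable $n_i$ weighted by $(1-\log n_i/R)^a\mathbf 1_{n_i\le e^R}$. The coefficients of this Dirichlet series in $\mathbf x$ are those of $D(\mathbf x)$. By Proposition~\ref{prop:factor}, $D=\mathcal A D_0$, so these coefficients are the coordinatewise Dirichlet convolution of $a(\mathbf r)$ with $b_0(\mathbf n)$, where $b_0$ denotes the coefficients of $D_0$. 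Substituting $\mathbf n=\mathbf r\mathbf n'$ gives
\[
 1-\frac{\log n_i}{R}=\Bigl(1-\frac{\log r_i}{R}\Bigr)-\frac{\log n_i'}{R}=b_i-\frac{\log n_i'}{R},
\]
and the cap $n_i\le e^R$ becomes $n_i'\le e^{Rb_i}$. This is exactly the weighting and capping that define $S_{a,0,R}(\mathbf b)$. The absolute convergence in Lemma~\ref{lem:Acoeff} justifies the rearrangement, since all sums are finite once the $r_i\le e^R$ convention is imposed. This gives \eqref{eq:weighted-A-exact}.

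\emph{Step 2: a crude bound.} Uniformly for all $\mathbf b$ with $0<b_i\le1$, I need $|S_{a,0,R}(\mathbf b)|\ll R^{C}$. The weights lie in $[0,1]$, so after expanding the cross factors as in \eqref{eq:S0conv} the bound follows from the same Rankin estimates:
\[
 \sum_{n\le U}\frac{|\lambda(n)|}{n}\ll R^{1/2},\qquad \sum_{n\le U}\frac{\tau_{1/4}(n)}{n}\ll R^{1/4}.
\]
This yields a bound of order $R^3$.

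\emph{Step 3: the limit \eqref{eq:weighted-A-limit}.} Split \eqref{eq:weighted-A-exact} at $\max_i r_i\le e^{\sqrt R}$.
\begin{itemize}
 \item The tail is at most $R^3e^{-\eta\sqrt R}\sum_{\mathbf r}|a(\mathbf r)|(r_1r_2r_3r_4)^\eta=o(R^{-1})$, by the weighted summability in \eqref{eq:Aseries}.
 \item In the main range, $1-R^{-1/2}\le b_i\le1$, so $\mathbf b\to\mathbf1$ uniformly in $\mathbf r$. Lemma~\ref{lem:weighted-polar-uniform} then gives $RS_{a,0,R}(\mathbf b)=\mathfrak C_a(\mathbf b)+o(1)$ uniformly. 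Continuity of $\mathfrak C_a$ at $\mathbf1$ gives $\mathfrak C_a(\mathbf b)=\mathcal C_a+o(1)$.
\end{itemize}
Summing against $a(\mathbf r)$ and using \eqref{eq:Asum} together with absolute convergence gives $RS_a(e^R)=\mathcal C_a+o(1)$.

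The main obstacle is Step 1. The weight $(1-\log n/R)^a$ is not multiplicative. It is only because the weight depends on $\log n_i$ additively through the cap variable that it transfers to the shifted cap $b_i$ after the convolution with $a(\mathbf r)$. I will check this carefully, and also confirm that $\mathcal A$ convolves coordinatewise in the same four variables that carry the weights, which holds by the coordinatewise form of \eqref{eq:Aseries}. The uniformity required in Step 3 is supplied by Lemma~\ref{lem:weighted-polar-uniform} itself, so Step 3 needs no new analytic input.
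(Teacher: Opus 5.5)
Your proposal is correct and follows the paper's proof essentially step for step. The exact identity comes from the coordinatewise convolution $D=\mathcal A D_0$ with the additive shift $b_i=1-\log r_i/R$. The limit comes from the absolute $R^3$ bound, the split at $\max_i r_i\le e^{\sqrt R}$ using the weighted summability of $a(\mathbf r)$, and Lemma~\ref{lem:weighted-polar-uniform} together with continuity of $\mathfrak C_a$ at $\mathbf 1$ and $\sum_{\mathbf r}a(\mathbf r)=1$.
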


\begin{proof}
The Dirichlet-series identity $D=\mathcal A D_0$ gives coordinatewise
convolution.  If the regular factor contributes $r_i$ to the $i$th coordinate,
then
\[
 \left(1-\frac{\log(r_in_i)}R\right)^a
 =\left(b_i-\frac{\log n_i}R\right)^a,
 \qquad b_i=1-\frac{\log r_i}R,
\]
which proves the exact identity \eqref{eq:weighted-A-exact}.

Since the weights lie in $[0,1]$, the absolute polar sum satisfies the same
crude estimate as \eqref{eq:S0crude}:
\begin{equation}\label{eq:weighted-polar-crude}
 |S_{a,0,R}(\mathbf b)|\ll R^3
 \qquad(0\le b_i\le1).
\end{equation}
Split \eqref{eq:weighted-A-exact} at
$\max_i r_i\le e^{\sqrt R}$.  Weighted absolute convergence from
\eqref{eq:Aseries} and \eqref{eq:weighted-polar-crude} give for the tail
\[
 \ll R^3e^{-\eta\sqrt R}
 \sum_{\mathbf r}|a(\mathbf r)|(r_1r_2r_3r_4)^\eta
 =o(R^{-1}).
\]
In the remaining range,
$1-R^{-1/2}\le b_i\le1$.  Lemma~\ref{lem:weighted-polar-uniform} and continuity at $\mathbf1$ give, uniformly
in $\mathbf r$,
\[
 R S_{a,0,R}(\mathbf b)=\mathcal C_a+o(1).
\]
Therefore
\[
 R S_a(e^R)
 =\{\mathcal C_a+o(1)\}
 \sum_{\max r_i\le e^{\sqrt R}}a(\mathbf r)+o(1)
 =\mathcal C_a+o(1),
\]
using \eqref{eq:Asum}.  This proves \eqref{eq:weighted-A-limit}.
\end{proof}

\begin{theorem}\label{thm:powertransfer}
For every fixed $a>0$,
\[
 \displaystyle
 S_a(U)=\frac{\mathcal C_a}{\log U}
 +o\!\left(\frac1{\log U}\right).
\]
\end{theorem}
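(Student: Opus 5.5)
The theorem is essentially a restatement of \eqref{eq:weighted-A-limit} in Lemma~\ref{lem:weighted-A-convolution}, with $R=\log U$. The plan is therefore to set $R=\log U$, so that $U=e^R$, and to read off $S_a(U)=S_a(e^R)$. Lemma~\ref{lem:weighted-A-convolution} gives $R\,S_a(e^R)=\mathcal C_a+o(1)$ as $R\to\infty$, and dividing by $R$ yields
\[
 S_a(U)=\frac{\mathcal C_a}{\log U}+o\!\left(\frac1{\log U}\right).
\]
There is no integrality issue: $S_a(U)$ is defined by the sharp cutoffs $n\le U$ and $d\le U^2$ for real $U$, and the lemma is proved for real $R\to\infty$.

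Two bookkeeping points need checking. First, the coefficients $b_{n,a}(U)$ vanish at $n=U$ and extend by zero beyond $U$. So the sharp box constraints $\ell_i,m_i\le U$ in \eqref{eq:Sb} coincide with the coordinate caps $n_i\le e^{Rb_i}$ used in $S_{a,0,R}$, and the weighted four-variable Dirichlet series is exactly the weighted box sum of $D=\mathcal A D_0$. Second, the gcd expansion of Lemma~\ref{lem:gcd} carries over verbatim with $\lambda(\ell)$ replaced by $b_{\ell,a}(U)$. Applying $\sum_{d\mid n}\varphi(d)=n$ identifies $S_a(U)$ with the weighted box sum of the coefficients of $D(\mathbf x)$ in \eqref{eq:Ddef}, each coordinate weighted by $(1-\log n_i/R)^a$. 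Together these two points justify the exact identity \eqref{eq:weighted-A-exact}.

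The only substantive input is the uniform convergence in Lemma~\ref{lem:weighted-polar-uniform}, and that has already been proved. The one genuine issue I would re-examine is finiteness of $\mathcal C_a$ for all $a>0$. For $0<a<1/2$ the deficit exponent $a-1/2$ is milder than the $-1/2$ in $\Cres$, so the majorant of Lemma~\ref{lem:polyhedral-ui} applies. For $a\ge1/2$ the deficit factors are bounded, and finiteness reduces to integrability of $\prod_e u_e^{-3/4}$ on the bounded set $\Omega$, which is clear.
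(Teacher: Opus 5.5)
Your proposal is correct and takes the same approach as the paper, whose proof is exactly to put $R=\log U$ in \eqref{eq:weighted-A-limit} of Lemma~\ref{lem:weighted-A-convolution}. Your extra checks are sound and only re-verify ingredients already established in Lemmas~\ref{lem:weighted-polar-uniform} and~\ref{lem:weighted-A-convolution}: the gcd expansion that identifies $S_a(U)$ with the weighted box sum of $D=\mathcal A D_0$, and the finiteness of $\mathcal C_a$ via $c_i^{a-1/2}\le c_i^{-1/2}$ on $\Omega$ for $0<a<1/2$.
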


\begin{proof}
Take $R=\log U$ in Lemma~\ref{lem:weighted-A-convolution}.
\end{proof}

\subsection{Zhuravlev's coefficients evaluated exactly}
\label{sec:Zh-exact}

The transfer theorems above use $\lambda$ only through
Proposition~\ref{prop:factor} and through $\sum_{\mathbf r}a(\mathbf
r)=\mathcal A(\mathbf0)$.  Both survive for a general squarefree-supported
multiplicative $b$ with $b(p)=-v_p$ and $v_p\to\tfrac12$, because the
first-order terms of the local factor are unchanged: exactly one exponent equal
to $1$ contributes $-v_p\sum_rp^{-1-x_r}$, one exponent from each side
contributes $v_p^2\sum_{r,s}p^{-1-x_r-x_s}$, and everything else is
$O(p^{-2})$, the same-side terms now having total valuation two rather than
vanishing coefficient.  Writing $w_0=1$, $w_1=-2v_p$, $w_2=v_p^2$ for the inner
sums $\sum_{i+j=k}b(p^i)b(p^j)$ and using
$L_p(\mathbf0)=\sum_{k,l\le2}w_kw_lp^{-\max(k,l)}$, one gets
\begin{equation}\label{eq:general-A}
 \mathcal A_b(\mathbf 0)=\prod_p\frac{L_p(\mathbf0)}{1-p^{-1}},
 \qquad
 L_p(\mathbf0)=1+\frac{4v_p(v_p-1)}{p}
   +\frac{v_p^2(2-4v_p+v_p^2)}{p^2},
\end{equation}
and, for coefficients carrying the profile of exponent $a$,
\begin{equation}\label{eq:general-transfer}
 S_b(1,U)=\frac{\mathcal A_b(\mathbf0)\,\mathcal C_a}{\log U}
 +o\!\left(\frac1{\log U}\right).
\end{equation}

Three consequences bear on Section~\ref{sec:Zhcoeff}.

\emph{The normalisation $p f(p)\to-\tfrac12$ is forced, not chosen.}  For
constant $v$ the polar exponents in \eqref{eq:factor} become $-v$ and $v^2$,
the net homogeneity of the Perron integral is $(\log U)^{4v(v-1)}$, and
\[
 4v(v-1)=4\bigl(v-\tfrac12\bigr)^2-1\ge-1,
\]
with equality only at $v=\tfrac12$.  Any other limiting value costs a power of
$\log U$, not merely a constant.  Zhuravlev's $f(p)\sim-1/(2p)$ is therefore
the unique admissible normalisation, and it is exactly $\lambda(p)=-\tfrac12$
recovered on squarefree support.

\emph{Squarefree support is cheap.}  For $b=\mu^2\lambda$ one has
$v_p\equiv\tfrac12$, so $L_p(\mathbf0)=1-p^{-1}+\tfrac1{16}p^{-2}$ and
\begin{equation}\label{eq:sqfree-A}
 \mathcal A_{\mu^2\lambda}(\mathbf0)
 =\prod_p\left(1+\frac1{16p(p-1)}\right)
 =1.0489438\ldots,
\end{equation}
so discarding the prime powers of $\lambda$ costs under $5\%$.

\emph{Zhuravlev's normalisation supplies a profile.}  Since
$b_\ell=\ell f(\ell)N(U/\ell;\ell)/N(U)$ with
$N(x;\ell)=\sum_{q\le x,(q,\ell)=1}|f(q)|$, and since $|f|$ has the Dirichlet
series $\prod_p(1+|f(p)|p^{-s})=\zeta(1+s)^{1/2}\times(\text{regular})$, one has
$N(x)\asymp(\log x)^{1/2}$ and hence
\[
 \frac{N(U/\ell)}{N(U)}\sim\left(\frac{\log(U/\ell)}{\log U}\right)^{1/2}.
\]
His coefficients are thus, up to a convergent Euler product, the power profile
of Section~\ref{sec:weights} with $a=\tfrac12$.  At that exponent the factors
$c_i^{a-1/2}$ in \eqref{eq:Capower} disappear and $\mathcal C_{1/2}$ is
elementary: for fixed $\tau_{14},\tau_{23}$ the remaining two variables are
each free up to $m=1-\max(\tau_{14},\tau_{23})$, so the inner integrals factor
as $(4m^{1/4})^2$ and
\[
 \int_\Omega(\tau_{13}\tau_{14}\tau_{23}\tau_{24})^{-3/4}\dd\boldsymbol\tau
 =128\int_0^1y^{-1/2}(1-y)^{1/2}\dd y
 =128\,B\!\left(\tfrac12,\tfrac32\right)=64\pi .
\]
With $K_{1/2}=\sqrt\pi/2$ this gives the closed form
\begin{equation}\label{eq:Chalf}
 \mathcal C_{1/2}=\frac{4\pi^3}{\Gamma(\tfrac14)^4}
 =\frac{\Gamma(\tfrac34)^4}{\pi}
 =0.7177700110\ldots,
\end{equation}
against $\mathcal C_0=\Cres=0.8869411\ldots$; the profile is worth a factor
$\Gamma(\tfrac34)^8/(2\pi)=0.8093$.  As a check, \eqref{eq:Chalf} also follows
from the one-dimensional reduction \eqref{eq:Careduction} at $a=\tfrac12$, where
the first hypergeometric factor degenerates to $1$; the two agree to twelve
decimal places.

\begin{remark}\label{rem:Zh-exact}
Combining \eqref{eq:general-A}, \eqref{eq:general-transfer} and
\eqref{eq:Chalf} at Zhuravlev's own $v_p=p|f(p)|$ gives
$\mathcal A(\mathbf0)=1.1218\ldots$, so that his coefficients satisfy
\[
 S_b(1,U)=\frac{0.8052\ldots}{\log U}+o\!\left(\frac1{\log U}\right),
 \qquad\text{whence}\qquad
 \kappa_0\ge0.0571\ldots.
\]
These are of course not the numbers Zhuravlev states.  His theorem gives
$\kappa_0\ge0.0248493\ldots$ in the corrected form of
Corollary~\ref{cor:Zh-corrected}, and the two differ for a reason that has
nothing to do with his choice of $f$: he estimates $S_b(1,U)$ by Wirsing's
mean-value theorem, which was the natural tool for a sum of this shape in 1974,
whereas the calculation above evaluates the same sum exactly, using the
four-variable Euler product of Section~\ref{sec:euler}.  Passing from
$S_b(1,U)$ to $\kappa_0$ also uses Section~\ref{sec:corrections} and the
coefficient-uniform estimates of Section~\ref{sec:Zsource}.  What the
calculation shows is that his coefficients were well chosen.  Wirsing's
inequality accounts for a factor $2.3$ on its own; optimising $v_p$ prime by
prime inside \eqref{eq:general-A} moves the constant by under one per cent, and
$f(2),f(3),f(5),f(7)$ are already within a few per cent of the minimisers.  By
the local inequality of Section~\ref{sec:psd} the multiplicative structure
offers nothing further either.  The freedom that remains is the profile, and
that is what Sections~\ref{sec:weights} and~\ref{sec:psd} exploit.
\end{remark}

It is natural to ask whether one gains by keeping Zhuravlev's coefficients
in place of $\lambda$.  Ordering the constants of
\eqref{eq:general-transfer}, smallest first,
\[
\begin{array}{lcc}
\text{three squared mollifiers (Theorem~\ref{thm:three-square})} & 0.6568 & \kappa_0>0.0700162\\
\lambda,\ a=\tfrac14 & 0.6783 & \kappa_0>0.0677586\\
\lambda,\ a=\tfrac12 & 0.7178 & \kappa_0>0.0640660\\
\text{Zhuravlev's }f & 0.8052 & \kappa_0>0.0571\\
\lambda,\ \text{sharp cutoff} & 0.8869 & \kappa_0>0.0518466\\
\text{Wirsing's bound on Zhuravlev's }f & 1.8506 & \kappa_0>0.0248493
\end{array}
\]
One does not: his choice sits inside the family treated here, with profile
exponent $\tfrac12$ in place of the more favourable $\tfrac14$, and with the
arithmetic factor $\mathcal A(\mathbf0)>1$ of \eqref{eq:sqfree-A} attached to
squarefree support, which is independent of the profile and so cannot be traded
against it.  Relaxing either brings one back to $\lambda$ at $a=\tfrac14$, and
the positive-semidefinite programme improves on that in turn.  It is worth
noting how close the 1974 choice comes: it already lies above the sharp-cutoff
model of Section~\ref{sec:sharp}.

\subsection{The quarter-power constant}

The four-dimensional integral admits the following one-dimensional reduction.  Put
$\alpha=1/4$ and $q=a+1/2$.  After setting
$y=1-\tau_{14}$ and $z=1-\tau_{23}$, the integrations in
$\tau_{13}$ and $\tau_{24}$ each produce
\[
 H_a(y,z):=\int_0^{\min(y,z)}
 t^{\alpha-1}(y-t)^{q-1}(z-t)^{q-1}\dd t.
\]
If $m=\min(y,z)$ and $M=\max(y,z)$, Euler's beta integral gives
\begin{equation}\label{eq:Ha}
 H_a(y,z)=B(\alpha,q)m^{\alpha+q-1}M^{q-1}
 {}_2F_1(1-q,\alpha;\alpha+q;m/M).
\end{equation}
Using the symmetry in $y,z$, restricting to $0<z<y<1$, and writing $z=yt$,
we therefore obtain
\begin{align}
 &\int_\Omega
 (\tau_{13}\tau_{14}\tau_{23}\tau_{24})^{-3/4}
 (c_1c_2c_3c_4)^{a-1/2}\dd\boldsymbol\tau \notag\\
 &\quad=2B\left(\frac14,a+\frac12\right)^2
 B\left(4a+\frac12,\frac14\right)
 \int_0^1 t^{2a-1/2}
 {}_2F_1\left(\frac12-a,\frac14;a+\frac34;t\right)^2 \notag\\
 &\hspace{42mm}\times
 {}_2F_1\left(\frac34,4a+\frac12;4a+\frac34;t\right)\dd t.
 \label{eq:Careduction}
\end{align}
Indeed, after \eqref{eq:Ha} the remaining $y$-integral is
\[
 \int_0^1y^{4a-1/2}(1-y)^{-3/4}(1-yt)^{-3/4}\dd y
 =B\left(4a+\frac12,\frac14\right)
 {}_2F_1\left(\frac34,4a+\frac12;4a+\frac34;t\right).
\]

For $a=1/4$, Euler's transformation gives
\[
 {}_2F_1\left(\frac34,\frac32;\frac74;t\right)
 =(1-t)^{-1/2}{}_2F_1\left(1,\frac14;\frac74;t\right).
\]
Substitution into \eqref{eq:Careduction}, together with
$K_{1/4}=\Gamma(5/4)/\Gamma(3/4)$ and the reflection formula, yields
\begin{equation}\label{eq:Cquarter}
 \mathcal C_{1/4}=
 \frac{\pi^{7/2}\sqrt2}{96\Gamma(3/4)^6}
 \int_0^1(1-t)^{-1/2}F(t)^2G(t)\dd t,
\end{equation}
where
\[
 F(t)={}_2F_1\!\left(\frac14,\frac14;1;t\right),\qquad
 G(t)={}_2F_1\!\left(1,\frac14;\frac74;t\right).
\]
Write $F(t)^2G(t)=\sum_{n\ge0}q_nt^n$; all $q_n$ are non-negative.  Then
\[
 J=\sum_{n\ge0}q_nB(n+1,1/2),
 \qquad
 \sum_{n\ge0}q_n=\frac{3\pi}{2\Gamma(3/4)^4}.
\]
Since the beta factors decrease,
\begin{align*}
 \sum_{n=0}^Nq_nB(n+1,1/2)\le J\le{}&
 \sum_{n=0}^Nq_nB(n+1,1/2)\\
 &+B(N+2,1/2)
 \left(\frac{3\pi}{2\Gamma(3/4)^4}-\sum_{n=0}^Nq_n\right).
\end{align*}
Exact rational coefficient generation followed by Arb ball arithmetic
\cite{JohanssonArb} (python-flint 0.9.0, 256-bit precision) at $N=700$ gives
\[
 0.6779054<\mathcal C_{1/4}<0.6786576.
\]
In particular
\begin{equation}\label{eq:quarter-proportion}
 \frac1{8e\mathcal C_{1/4}}>0.0677586.
\end{equation}

\subsection{Regularisation at the final coefficient and the order of limits}

The power weight vanishes at the final prime, so to use Lemma~\ref{lem:coefvariation} we first regularise it.  For fixed $\varepsilon>0$ put
\begin{equation}\label{eq:Peps}
 P_{\varepsilon,a}(x)=
 \left(\frac{x+\varepsilon}{1+\varepsilon}\right)^a,
 \qquad
 b_{n,a,\varepsilon}(U)=\lambda(n)
 P_{\varepsilon,a}\!\left(\frac{\log(U/n)}{\log U}\right).
\end{equation}
Then $|b_{n,a,\varepsilon}(U)|\le1$ and, for prime $U$,
\begin{equation}\label{eq:last-eps}
 |b_{U,a,\varepsilon}(U)|
 =\frac12\left(\frac{\varepsilon}{1+\varepsilon}\right)^a>0.
\end{equation}
For fixed \(\varepsilon\), this lower bound verifies
\eqref{eq:terminal-growth-condition} as \(T\to\infty\).

Let
\[
 L_{a,\varepsilon,R}(c)=
 \sum_{n\le e^{Rc}}\frac{\lambda(n)}n
 \left(\frac{c-\log n/R+\varepsilon}{1+\varepsilon}\right)^a.
\]
Stieltjes summation, including the non-zero endpoint term, gives
\begin{equation}\label{eq:eps-Stieltjes}
 L_{a,\varepsilon,R}(c)=
 \frac1{(1+\varepsilon)^a}
 \left\{
 \varepsilon^aL(e^{Rc})
 +a\int_0^cL(e^{R(c-v)})(v+\varepsilon)^{a-1}\dd v
 \right\}.
\end{equation}
Hence, for fixed $\varepsilon>0$ and locally uniformly for $c>0$,
\begin{equation}\label{eq:Psi-eps-limit}
 R^{1/2}L_{a,\varepsilon,R}(c)
 \longrightarrow \Psi_{a,\varepsilon}(c),
\end{equation}
where
\begin{equation}\label{eq:Psi-eps}
 \Psi_{a,\varepsilon}(c)=
 \frac1{\sqrt\pi(1+\varepsilon)^a}
 \left\{
 \varepsilon^ac^{-1/2}
 +a\int_0^c(c-v)^{-1/2}(v+\varepsilon)^{a-1}\dd v
 \right\}.
\end{equation}
Let $S_{a,\varepsilon}(U)$ denote \eqref{eq:Sb} for the coefficients in \eqref{eq:Peps}.
The same proof as Lemmas~\ref{lem:weighted-polar-uniform} and~\ref{lem:weighted-A-convolution} gives
\begin{equation}\label{eq:eps-diagonal}
 S_{a,\varepsilon}(U)=
 \frac{\mathcal C_{a,\varepsilon}}{\log U}
 +o_{a,\varepsilon}\!\left(\frac1{\log U}\right),
\end{equation}
with
\begin{equation}\label{eq:Caeps}
 \mathcal C_{a,\varepsilon}=
 \frac1{\Gamma(1/4)^4}
 \int_\Omega
 \prod_{e\in E}\tau_e^{-3/4}
 \prod_{i=1}^4\Psi_{a,\varepsilon}(c_i)
 \dd\boldsymbol\tau.
\end{equation}
Moreover,
\begin{equation}\label{eq:Caeps-limit}
 \mathcal C_{a,\varepsilon}\longrightarrow\mathcal C_a
 \qquad(\varepsilon\downarrow0).
\end{equation}
Indeed, \eqref{eq:Psi-eps} converges pointwise to
$K_ac^{a-1/2}$, because
\[
 \frac{a}{\sqrt\pi}\int_0^c(c-v)^{-1/2}v^{a-1}\dd v
 =\frac{\Gamma(a+1)}{\Gamma(a+1/2)}c^{a-1/2}.
\]
For $0<\varepsilon\le1$, \eqref{eq:Psi-eps} is bounded by
$C_a(c^{-1/2}+c^{a-1/2})$.  The sharp polyhedral integral already proves the
integrability of the first term, and the second is weaker.  Dominated
convergence therefore proves \eqref{eq:Caeps-limit}.

\begin{proposition}
\label{prop:two-stage}
Fix $0<\theta<1/4$ and $\varepsilon>0$.  For every sufficiently large $T$
choose, by Bertrand's postulate, a prime
\[
 T^\theta\le U_T\le2T^\theta.
\]
Then $\log U_T=\theta\log T+O(1)$, the coefficients
\eqref{eq:Peps} satisfy the hypotheses of Proposition~\ref{prop:Zhflex}, and
that proposition may be applied with the constant
$\mathcal C_{a,\varepsilon}$.  One first lets $T\to\infty$ with
$\theta,\varepsilon$ fixed, then lets $\varepsilon\downarrow0$, and only then
lets $\theta\uparrow1/4$.  No estimate uniform in $\varepsilon$ is required.
\end{proposition}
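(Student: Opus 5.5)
The plan is to verify, in order, each hypothesis of Proposition~\ref{prop:Zhflex} for the coefficients $b_{n,a,\varepsilon}(U_T)$, and then feed the resulting mean-square bound into Proposition~\ref{prop:distilled}.

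First, Bertrand's postulate gives a prime $U_T\in[T^\theta,2T^\theta]$. Hence $\log U_T=\theta\log T+O(1)$, and $U_T<T^{1/4}$ for large $T$ because $\theta<1/4$. For the coefficient conditions, note that $\lambda(1)=1$ and $P_{\varepsilon,a}(1)=1$, so $b_1=1$. Next, $|\lambda(n)|\le1$ and $0\le P_{\varepsilon,a}\le1$ on $[0,1]$, so $|b_n|\le1$. Finally \eqref{eq:last-eps} gives $b_{U}\ne0$ with $\log^+(1/|b_U|)=\log 2+a\log((1+\varepsilon)/\varepsilon)=O_{\varepsilon}(1)\ll\log(2T)$. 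Condition \eqref{eq:terminal-growth-condition} then reads $U_T\log^3(2T)\cdot O_\varepsilon(1)=o(T)$, which holds since $U_T\ll T^\theta$ with $\theta<1/4$. The only arithmetic input still needed is \eqref{eq:Sb-gamma}. This is \eqref{eq:eps-diagonal} evaluated along the sequence $U=U_T$: for fixed $a,\varepsilon$ it gives $S_{a,\varepsilon}(U_T)\le(\mathcal C_{a,\varepsilon}+o(1))/\log U_T$. So Proposition~\ref{prop:Zhflex} applies with $\gamma=\mathcal C_{a,\varepsilon}$, and it yields the mean-square bound and the $o(T)$ horizontal increments. Proposition~\ref{prop:distilled}, with $c=2$, then gives $\kappa_0\ge 2\theta/(4e\,\mathcal C_{a,\varepsilon})$.

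The order of limits works because every $o(1)$ and $o(T)$ above is allowed to depend on $\varepsilon$ and $\theta$. Each error term is only required to tend to zero as $T\to\infty$ with $(\theta,\varepsilon)$ held fixed, and the bound on $\kappa_0$ that comes out is a statement about a $\liminf$ in $T$. It no longer contains $T$. Having obtained $\kappa_0\ge\theta/(2e\mathcal C_{a,\varepsilon})$ for each fixed pair, I then let $\varepsilon\downarrow0$ and use \eqref{eq:Caeps-limit} to get $\kappa_0\ge\theta/(2e\mathcal C_a)$. Letting $\theta\uparrow1/4$ gives $\kappa_0\ge1/(8e\mathcal C_a)$. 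Since each step is a limit of inequalities for a fixed real number $\kappa_0$, no uniformity in $\varepsilon$ is needed.

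The main obstacle is justifying \eqref{eq:eps-diagonal} along the prime sequence $U_T$. The transfer is proved for $U\to\infty$ continuously, so restricting it to a sequence is harmless. What needs care is that the regularised coefficients fit the framework of Lemmas~\ref{lem:weighted-polar-uniform} and~\ref{lem:weighted-A-convolution}.

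The coordinate shift in Lemma~\ref{lem:weighted-A-convolution} must survive the new profile. The weight is $P_{\varepsilon,a}(1-\log(r_in_i)/R)$, and this equals $((b_i-\log n_i/R+\varepsilon)/(1+\varepsilon))^a$ with $b_i=1-\log r_i/R$. So the polar box sum is again the $\nu_R$ expansion, now with tails $R^{1/2}L_{a,\varepsilon,R}(c_i)$.

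One also needs a uniform mesh majorant. From \eqref{eq:eps-Stieltjes} and \eqref{eq:Lbound}, $|R^{1/2}L_{a,\varepsilon,R}(c)|\ll_{a,\varepsilon}(c+R^{-1})^{-1/2}+1$. This is dominated by the sharp majorant of Proposition~\ref{prop:polartransfer}, so the sub-box comparison and uniform integrability go through. The interior limit then follows from \eqref{eq:Psi-eps-limit}.

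I would check these two points explicitly and treat the rest as a transcription of the earlier proofs.
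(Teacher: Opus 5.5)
Your proposal is correct and follows the paper's argument. You verify the hypotheses of Proposition~\ref{prop:Zhflex} from $b_1=1$, $|b_n|\le1$ and \eqref{eq:last-eps}, take $\gamma=\mathcal C_{a,\varepsilon}$ from \eqref{eq:eps-diagonal} along the primes $U_T$, and pass to the limits in the stated order via \eqref{eq:Caeps-limit}, with every error term allowed to depend on $(\theta,\varepsilon)$. Your extra checks of the shifted weight and of the majorant $(c+R^{-1})^{-1/2}+1$ for $R^{1/2}L_{a,\varepsilon,R}(c)$ spell out what the paper compresses into ``the same proof as Lemmas~\ref{lem:weighted-polar-uniform} and~\ref{lem:weighted-A-convolution}''.
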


\begin{proof}
For fixed \(\varepsilon\), \eqref{eq:last-eps} gives
\(\log^+(1/|b_U|)=O_{a,\varepsilon}(1)\), so
\eqref{eq:terminal-growth-condition} holds; all remaining coefficient bounds
are uniform.
Equation \eqref{eq:eps-diagonal} supplies the diagonal constant as
$T\to\infty$.  After taking the resulting liminf in $T$, equation
\eqref{eq:Caeps-limit} permits $\varepsilon\downarrow0$.  The restriction
$\theta<1/4$ is open, so the final lower bound is obtained by
$\theta\uparrow1/4$.
\end{proof}

\section{Positive-semidefinite profile optimisation}
\label{sec:psd}
The scalar weight $|M_U(\tfrac12+it)|^2$ imposes a rank-one condition on the
coefficient array entering Zhuravlev's quadratic form.  Selberg's sign argument
requires only non-negativity, so one may instead take a finite sum of squares.
For $1\le j\le J$, let
\[
 M_j(s)=\sum_{n\le U}b_n^{(j)}n^{-s},\qquad
 W(t)=\sum_{j=1}^J|M_j(\tfrac12+it)|^2.
\]
Put $\mathbf b_n=(b_n^{(1)},\ldots,b_n^{(J)})$ and
\begin{equation}\label{eq:psd-Clm}
 C_{\ell m}=\langle \mathbf b_\ell,\mathbf b_m\rangle.
\end{equation}
Then $C$ is real symmetric and positive semidefinite, and
\begin{equation}\label{eq:psd-W}
 W(t)=\sum_{\ell,m\le U}\frac{C_{\ell m}}{\sqrt{\ell m}}
 \left(\frac m\ell\right)^{it}\ge0.
\end{equation}
Conversely every real positive-semidefinite matrix has a Gram factorisation.
Define
\begin{equation}\label{eq:psd-YC}
 Y_C(s)=\zeta(s)\sum_{\ell,m\le U}C_{\ell m}\ell^{-s}m^{s-1}.
\end{equation}
Symmetry gives $Y_C(s)=\chi(s)Y_C(1-s)$, while on the critical line
$\chi(s)^{-1/2}Y_C(s)=Z(t)W(t)$.  Thus the new detector preserves the sign of
Hardy's function.  We impose
\begin{equation}\label{eq:psd-constraints}
 C\succeq0,\qquad C_{11}=1,\qquad C_{nn}\le1\quad(n\le U).
\end{equation}
The Gram-matrix Cauchy--Schwarz inequality then gives $|C_{\ell m}|\le1$.

\begin{proposition}\label{prop:matrix-analytic}
Suppose that $C=C(U)$ satisfies \eqref{eq:psd-constraints} and that
\begin{equation}\label{eq:psd-C1U}
 C_{1U}\ne0,\qquad \log^+\frac1{|C_{1U}|}\ll\log T.
\end{equation}
Then the coefficient-uniform analytic proof of Proposition~\ref{prop:distilled}
applies to $Y_C$, with the same factor $c=2$ and the same range $U=T^\theta$,
$\theta<1/4$.  If $S_C(1,U)\le(\gamma+o(1))/\log U$, then
$\kappazero\ge 2\theta/(ec^2\gamma)$.
\end{proposition}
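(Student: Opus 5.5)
The plan is to retrace the proof chain for the rank-one detector — Lemma~\ref{lem:rectangle-sign}, Lemma~\ref{lem:coefAFE}, Lemma~\ref{lem:coefvariation}, Lemma~\ref{lem:truncated-diagonal}, Lemma~\ref{lem:frequency-spacing}, Proposition~\ref{prop:Zhflex} — and check that each step uses the product $b_\ell b_m$ only through two properties. The first is that it is a real array bounded by $1$ in absolute value. The second is that it enters the generalised Dirichlet coefficients
\[
 a_\xi^{C}=\sum_{\substack{n\ge1,\ \ell,m\le U\\ n\ell/m=\xi}}\frac{C_{\ell m}}{m}.
\]
Replacing $b_\ell b_m$ by $C_{\ell m}$ throughout then gives the statement. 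First I verify the structural inputs. By \eqref{eq:psd-YC}, $Y_C$ satisfies $Y_C(s)=\chi(s)Y_C(1-s)$ and has only the pole at $s=1$, with residue $\sum C_{\ell m}m^{-1}$. This plays the role of $M_U(0)M_U(1)$ in \eqref{eq:Lavrik-half-explicit} and is $\ll U\log(2U)$ because $|C_{\ell m}|\le1$. Lavrik's identity therefore gives $\aleph Y_C=2\Real\beta_C$ on the critical line, so $c=2$. Since $\aleph Y_C=Z(t)W(t)$ with $W\ge0$, the sign-change argument of Lemma~\ref{lem:rectangle-sign} goes through unchanged. The one point needing care is the auxiliary rectangle. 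There I apply Littlewood's inequality to $\zeta$ and to $W$ via $\sum_{\ell,m}C_{\ell m}\ell^{-s}m^{s-1}$, or, after a Gram factorisation, to each $M_j$ separately. In fact it is enough to note that $\int\log W\ge 2\max_j\int\log|M_j|$ fails in general, so I instead use the analytic function $H(s)=\sum_{\ell,m}C_{\ell m}\ell^{-s}m^{s-1}$, which equals $W$ on the line. I normalise it at $\Real s=3$, where the $\ell=m=1$ term $C_{11}=1$ dominates only after a shift. Failing that, I bound $-\int\log|H|$ by Jensen on disks, exactly as in Lemma~\ref{lem:coefvariation}, obtaining $O(UL)=o(T)$.

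Second, the approximate functional equation. Lemmas~\ref{lem:fixed-phase-comparison} and~\ref{lem:coefAFE} use only $|a_\xi|\le\sum_{\ell,m}|C_{\ell m}|/m$ over representations, together with the residue bound, so they hold verbatim with $|C_{\ell m}|\le1$. Third, the mean square. Lemma~\ref{lem:truncated-diagonal} expands $\sum a_\xi^2\xi^{-\alpha}$ over quadruples, and with $C$ in place of $b_\ell b_m$ the coefficient of $X^x/x$ becomes
\[
 \sum C_{\ell_1m_1}C_{\ell_2m_2}\frac{(\ell_1m_2,\ell_2m_1)}{\ell_1m_1\ell_2m_2}.
\]
After the interchange $m_1\leftrightarrow m_2$ this is $S_C(1,U)$, the form defined by replacing $b_\ell b_m$ in \eqref{eq:Sb} with $C_{\ell m}$. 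The interchange is legitimate because $C$ is symmetric, so the identity is really the gcd-form of Lemma~\ref{lem:gcd} with $c(a)=\sum_{\ell m=a}C_{\ell m}$. The secondary term becomes $\zeta(\alpha)Q_C$, and I must keep $Q_C\ge0$. I get this by writing $C=\sum_j\mathbf b^{(j)}\mathbf b^{(j)\top}$, which turns $Q_C$ into $\sum_d\Phi_\alpha(d)\bigl(\sum_j\cdots\bigr)^2$. The inner sum is linear in $C$, hence still a square of a real number, so $Q_C\ge0$ and the upper bound \eqref{eq:truncated-diagonal-upper} survives. The frequency-spacing Lemma~\ref{lem:frequency-spacing} is coefficient-free, and $R_C(U)\ll U^2\log^2(2U)$ as before. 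Proposition~\ref{prop:Zhflex}'s argument therefore gives \eqref{eq:mean-general} with $\gamma$.

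The main obstacle is the horizontal-argument bound, Lemma~\ref{lem:coefvariation}. Its large-disk lower bound (Proposition~\ref{prop:Lavrik-Jensen-bounds}) uses the terminal coefficient $b_U$ to secure a nonvanishing point far to the left. For $Y_C$ the smallest frequency is $1/U$, whose coefficient is $\sum_{n\ell/m=1/U}C_{\ell m}/m=C_{1U}/U$, coming only from $\ell=1$, $m=U$ with $U$ prime. So \eqref{eq:psd-C1U} is exactly the replacement for $\log^+(1/|b_U|)\ll L$. I would re-run the appendix bound with $\eta_U=\log^+(1/|C_{1U}|)$, check that it needs nothing else from the rank-one structure, and conclude that the boundary increments are $O(UL^3)=o(T)$. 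Proposition~\ref{prop:distilled} with $c=2$ then yields $\kappa_0\ge2\theta/(ec^2\gamma)$.
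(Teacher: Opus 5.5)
Your treatment of the residue, the approximate functional equation, the spacing lemma and the centre estimate (smallest frequency $1/U$ with coefficient $C_{1U}/U$) agrees with the paper's proof. Two steps, however, fail.

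First, the critical-edge bound $\int_T^{2T}\log W(t)\,dt\ge -o(T)$. The inequality you say fails actually holds pointwise, because $W\ge|M_j|^2$ for every $j$. What fails for an arbitrary Gram factorisation is the normalisation of $M_j$ at $\Real s=3$, since $b^{(j)}_1$ may be small or zero. The paper's remedy is to rotate the Gram vectors. Since $\|\mathbf b_1\|^2=C_{11}=1$ and $W$ depends only on $C$, one may take $\mathbf b_1=(1,0,\ldots,0)$. Then $M_1$ has constant term $1$ and $|b^{(1)}_n|\le C_{nn}^{1/2}\le1$, so the scalar argument of Lemma~\ref{lem:rectangle-sign} gives $\int_T^{2T}\log|M_1(\tfrac12+it)|\,dt\ge-O(U)$, and $W\ge|M_1|^2$ finishes.

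Your substitute $H$ cannot be normalised in this way. Since $|\ell^{-s}m^{s-1}|=\ell^{-\sigma}m^{\sigma-1}$, no half-plane is dominated by the $C_{11}$ term. Normalising instead by the extreme term $C_{1U}U^{s-1}$ gives at best $\int_T^{2T}\log W\,dt\ge -T\{\tfrac12\log U+\log^+(1/|C_{1U}|)\}-o(T)$. That is a loss of order $T\log T$, which swamps the optimisation in Proposition~\ref{prop:distilled}. Jensen on disks, as in Lemma~\ref{lem:coefvariation}, controls zero counts and hence argument increments; it does not bound a vertical integral of $\log|H|$.

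Second, the relabelling $m_1\leftrightarrow m_2$ sends $C_{\ell_1m_1}C_{\ell_2m_2}$ to $C_{\ell_1m_2}C_{\ell_2m_1}$. Symmetry $C_{\ell m}=C_{m\ell}$ does not make these equal; that equality is the rank-one condition. A counterexample: take $U=2$ and $C=I_2$ (perturb the off-diagonal entry slightly if you want \eqref{eq:psd-C1U}). Then $Y_C=\tfrac32\zeta(s)$, so the coefficient of $X^x/x$ is $9/4$, whereas $\sum_d\varphi(d)\bigl(\sum_{d\mid\ell m}C_{\ell m}/(\ell m)\bigr)^2=7/4$.

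In Gram coordinates, put $X_{jk}(d)=\sum_{d\mid\ell m}b^{(j)}_\ell b^{(k)}_m/(\ell m)$. The true main coefficient is $\sum_d\varphi(d)\sum_{j,k}X_{jk}(d)^2$, not $\sum_d\varphi(d)\bigl(\sum_jX_{jj}(d)\bigr)^2$. Likewise the $\zeta(\alpha)$-coefficient is $\sum_d\Phi_\alpha(d)\sum_{j,k}Y_{jk}(d)Y_{kj}(d)$, with the non-symmetric $Y_{jk}(d)=\sum_{d\mid\ell m}b^{(j)}_\ell b^{(k)}_m/(\ell^\alpha m)$. Your Gram argument does not turn this into a sum of squares. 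Since $\zeta(\alpha)\approx-1/x$, this term is of the same order as the main term, so its sign cannot be waved away.

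The paper's proof does not supply this step either. It cites Proposition~\ref{prop:psd-diagonal}, whose justification (linearity in each $b_\ell b_m$) overlooks the same point. As written, the conclusion is therefore justified only for rank-one $C$, where it is Proposition~\ref{prop:Zhflex}. For higher rank, the hypothesis must be placed on the cross-paired form above, and the secondary term needs a separate argument.
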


\begin{proof}
The regularised approximate functional equation is linear in the array
multiplying $\ell^{-s}m^{s-1}$, and the fixed-phase construction extends from
$b_\ell b_m$ to $C_{\ell m}$.  Choose Gram coordinates with
$\mathbf b_1=(1,0,\ldots,0)$; then $W(t)\ge |M_1(\tfrac12+it)|^2$, giving the
same critical-edge lower bound.  The large-disk estimate uses only
$|C_{\ell m}|\le1$.

For the centre estimate the unique smallest frequency $1/U$ comes from
$(n,\ell,m)=(1,1,U)$ and has coefficient $C_{1U}/U$.  Repeating
Appendix~\ref{app:Lavrik-horizontal} with $b_U$ replaced by $C_{1U}$ gives, for
the same centre $s_0$,
\[
 \Lambda_C^+(s_0)=C_{1U}U^{s_0-1}\{1+O(e^{-cL^2})\},
\]
where \eqref{eq:psd-C1U} is exactly what is needed to dominate the remaining
frequencies and the polar term.  The mean-square expansion is quadratic in $C$;
its diagonal is Proposition~\ref{prop:psd-diagonal}.  The secondary form is
\[
 Q_C(\alpha,U)=\sum_{d\le U}\Phi_\alpha(d)
 \left(\sum_{\substack{\ell,m\le U\\d\mid \ell m}}
       \frac{C_{\ell m}}{\ell^\alpha m}\right)^2\ge0,
\]
so the coefficient $\zeta(\alpha)$ again has the favourable sign.  The
rational-frequency off-diagonal estimate is unchanged, and the logarithmic
optimisation is identical to the scalar case.
\end{proof}

\begin{proposition}\label{prop:psd-diagonal}
For a symmetric coefficient matrix $C$,
\begin{equation}\label{eq:psd-SC}
 S_C(1,U)=\sum_{d\le U}\frac{\phi(d)}{d^2}
 \left(\sum_{\substack{\ell,m\le U\\ d\mid \ell m}}
       \frac{C_{\ell m}}{\ell m}\right)^2.
\end{equation}
For fixed $U$, this is a convex quadratic function of $C$.
\end{proposition}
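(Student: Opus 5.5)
The plan is to repeat the diagonal computation of Lemma~\ref{lem:truncated-diagonal} with the product $b_\ell b_m$ replaced by the array entry $C_{\ell m}$, and then read off convexity from the shape of the result. By \eqref{eq:psd-YC}, the generalised Dirichlet coefficients of $Y_C$ are
\[
 a_\xi^{(C)}=\sum_{\substack{n\ge1,\ \ell,m\le U\\ n\ell/m=\xi}}\frac{C_{\ell m}}{m},
\]
which is \eqref{eq:alambda-explicit} with $b_\ell b_m$ replaced by $C_{\ell m}$. I would expand $\sum_{\xi\le X}(a_\xi^{(C)})^2\xi^{-\alpha}$ exactly as in that lemma. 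One solves $n_1\ell_1m_2=n_2\ell_2m_1$ through $g=(\ell_1m_2,\ell_2m_1)$ and sums over the free parameter $k$ by Euler summation. The coefficient of $X^x/x$ is then the quartic expression
\[
 \sum_{\ell_1,m_1,\ell_2,m_2\le U}
 \frac{C_{\ell_1m_1}C_{\ell_2m_2}}{\ell_1m_1\ell_2m_2}\,(\ell_1m_2,\ell_2m_1),
\]
and the secondary term carries $\zeta(\alpha)$ times the analogous $\Phi_\alpha$ form.

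The next step converts the gcd into a sum of squares using $\sum_{d\mid n}\varphi(d)=n$. In the scalar case this was done by swapping $m_1$ and $m_2$, which is legitimate there because $b_{\ell_1}b_{m_1}b_{\ell_2}b_{m_2}$ factorises. For a general symmetric $C$ the entries are attached to the pairs $(\ell_1,m_1)$ and $(\ell_2,m_2)$, so that swap is no longer free. This is the step I expect to be the main obstacle. I would first exploit $C_{\ell m}=C_{m\ell}$ to symmetrise the kernel, and then use the local (prime-by-prime) structure of the gcd to write it as a positive combination of products $\one_{d\mid\cdot}\,\one_{d\mid\cdot}$, with each factor depending on one pair only. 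The aim is that, after collecting the divisor weights, the kernel becomes $\sum_d w(d)\,\one_{d\mid\ell_1m_1}\one_{d\mid\ell_2m_2}$ with $w(d)\ge0$, normalised as in \eqref{eq:psd-SC}. If the swap cannot be avoided, I would instead take \eqref{eq:psd-SC} as the quadratic form governing the diagonal and check, as in Lemma~\ref{lem:gcd}, that it reduces to \eqref{eq:Sb} when $C$ has rank one. In that case I would also verify that the upper bound \eqref{eq:truncated-diagonal-upper} still holds for it, via the non-negativity of the $\zeta(\alpha)$ form $Q_C$ already displayed in the proof of Proposition~\ref{prop:matrix-analytic}.

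Convexity is then immediate. For fixed $U$, each inner sum $\sum_{d\mid\ell m}C_{\ell m}/(\ell m)$ is a linear functional of the finitely many entries of $C$. The weights $\varphi(d)/d^2$ are non-negative. Hence $S_C(1,U)$ is a finite non-negative combination of squares of linear forms, i.e.\ a positive-semidefinite quadratic form in $C$, and so it is convex. Combined with the convex constraint set \eqref{eq:psd-constraints}, this yields the semidefinite programme used in the rest of the section.
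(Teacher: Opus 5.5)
You have found exactly the step that fails, and your suspicion is justified. The paper's proof settles it in one line (``the calculation is linear in each occurrence of $b_\ell b_m$''). But the scalar computation is linear in $b_{\ell_1}b_{m_1}$ and $b_{\ell_2}b_{m_2}$ only \emph{before} the relabelling $m_1\leftrightarrow m_2$, which re-pairs $\ell_1$ with $m_2$; substituting $C_{\ell m}$ afterwards computes a different quadratic form.

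The gap in your proposal is that the repair you aim for cannot exist, because the identity already fails in rank two. Here $S_C(1,U)$ means, as Proposition~\ref{prop:matrix-analytic} intends, the diagonal coefficient of the mean square of $Y_C$.
\begin{itemize}
\item Symmetrising with $C_{\ell m}=C_{m\ell}$ gives the kernel $\tfrac12\{(\ell_1m_2,\ell_2m_1)+(\ell_1\ell_2,m_1m_2)\}$. This is not a function of $(\ell_1m_1,\ell_2m_2)$: for $U\ge4$ it equals $2$ at $(\ell_1,m_1,\ell_2,m_2)=(1,1,2,2)$ but $1$ at $(1,1,1,4)$. So no weights $w(d)$ can produce $\sum_d w(d)\mathbf 1_{d\mid\ell_1m_1}\mathbf 1_{d\mid\ell_2m_2}$.
\item Concretely, take $U=2$ and $C=I$, so $M_1=1$ and $M_2=2^{-s}$. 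Then $Y_C=\tfrac32\zeta(s)$, whose diagonal coefficient is $9/4$. The right side of \eqref{eq:psd-SC} is $101/64$, and even with the weights $\varphi(d)$, $d\le U^2$, of \eqref{eq:Sb} it is only $7/4$.
\end{itemize}

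Your fallback fails too, for two reasons.
\begin{itemize}
\item As printed, \eqref{eq:psd-SC} does not reduce to \eqref{eq:Sb} even in rank one, because of the weight $\phi(d)/d^2$ and the range $d\le U$. For $C=\mathbf 1\mathbf 1^{T}$ and $U=2$, the true diagonal (which \eqref{eq:Sb} reproduces) is $27/4$, while \eqref{eq:psd-SC} gives $349/64$.
\item At $C=I$ the true diagonal exceeds the trace-square form. So no bound of type \eqref{eq:truncated-diagonal-upper}, with $S_C$ given by the right side of \eqref{eq:psd-SC}, can hold. The form $Q_C$ in Proposition~\ref{prop:matrix-analytic} is built by the same substitution, so citing its non-negativity does not help.
\end{itemize}

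What the expansion actually gives, for any Gram factorisation $C=\sum_j b^{(j)}(b^{(j)})^{T}$, is
\[
 S_C(1,U)=\sum_{\ell_1,m_1,\ell_2,m_2\le U}
 \frac{C_{\ell_1m_1}C_{\ell_2m_2}}{\ell_1m_1\ell_2m_2}(\ell_1m_2,\ell_2m_1)
 =\sum_{d\le U^2}\varphi(d)\sum_{j,k}V_{jk}(d)^2,
 \qquad
 V_{jk}(d)=\sum_{\substack{\ell,m\le U\\ d\mid\ell m}}\frac{b^{(j)}_\ell b^{(k)}_m}{\ell m}.
\]
This agrees with the weight-corrected right side of \eqref{eq:psd-SC}, namely $\sum_d\varphi(d)\bigl(\sum_jV_{jj}(d)\bigr)^2$, in rank one but not in general. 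It is still a positive-semidefinite quadratic form in $C$, being the limit of $X^{-1}\sum_{\xi\le X}(a^{(C)}_\xi)^2$. So the convexity half of your argument, which matches the paper's, survives for the correct object. However, the downstream limit functional \eqref{eq:psd-Cq} would then have to pair $c_1$ with $c_4$ and $c_2$ with $c_3$, not $c_1$ with $c_2$ and $c_3$ with $c_4$.
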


\begin{proof}
The scalar diagonal is obtained by grouping equal rational frequencies and
using $\sum_{d\mid(n_1,n_2)}\phi(d)=(n_1,n_2)$.  The calculation is linear in
each occurrence of $b_\ell b_m$, so replacing it by $C_{\ell m}$ gives
\eqref{eq:psd-SC}.  Each inner sum is linear in $C$, and $S_C$ is a sum of
squares.
\end{proof}

The unrestricted finite problem
\[
 \min\{S_C(1,U):C\succeq0,\ C_{11}=1,\ C_{nn}\le1\}
\]
is convex, but its asymptotic arithmetic structure is not transparent.  We
retain the reciprocal-square-root sequence and enlarge only the logarithmic
profile.  Put
\[
 v_n=\frac{\log(U/n)}{\log U},\qquad \mathbf b_n=\lambda(n)q(v_n),
\]
where $q:[0,1]\to\R^J$, $\|q(1)\|=1$, and $\|q(v)\|\le1$.  The associated kernel
is $K(u,v)=\langle q(u),q(v)\rangle$.

Let $\mathcal P$ be the class of scalar functions
\[
 f(v)=f_0+\sum_{r=1}^m f_r v^{a_r},\qquad a_r>0,
\]
and let $\mathcal P^J$ denote vector-valued profiles whose components lie in
$\mathcal P$.  For $f\in\mathcal P$ define the endpoint-inclusive transform
\begin{equation}\label{eq:psd-transform}
 \widetilde\Psi_f(c)=\frac1{\sqrt\pi}
 \left\{f(0)c^{-1/2}+\int_0^c(c-v)^{-1/2}f'(v)\,dv\right\}.
\end{equation}
Equivalently,
\begin{equation}\label{eq:psd-transform-closed}
 \widetilde\Psi_f(c)=\frac{f_0}{\sqrt{\pi c}}+
 \sum_{r=1}^m f_r\frac{\Gamma(a_r+1)}{\Gamma(a_r+1/2)}c^{a_r-1/2}.
\end{equation}
For $f_1,f_2,f_3,f_4\in\mathcal P$, define
\begin{equation}\label{eq:psd-Smix}
\begin{split}
 S_{f_1,f_2;f_3,f_4}(U)=\sum_{d\le U}\frac{\phi(d)}{d^2}
 &\left(\sum_{\substack{\ell,m\le U\\d\mid\ell m}}
 \frac{\lambda(\ell)\lambda(m)f_1(v_\ell)f_2(v_m)}{\ell m}\right)\\[-2pt]
 &\times\left(\sum_{\substack{n,r\le U\\d\mid nr}}
 \frac{\lambda(n)\lambda(r)f_3(v_n)f_4(v_r)}{nr}\right).
\end{split}
\end{equation}

\medskip\noindent\textbf{Mixed four-profile transfer.}\quad
For fixed $f_1,f_2,f_3,f_4\in\mathcal P$,
\[
 S_{f_1,f_2;f_3,f_4}(U)=\frac{\mathcal C(f_1,f_2,f_3,f_4)+o(1)}{\log U},
\]
where
\begin{equation}\label{eq:psd-mixed}
 \mathcal C(f_1,f_2,f_3,f_4)=\frac1{\Gamma(1/4)^4}
 \int_\Omega\prod_{e\in E}\tau_e^{-3/4}
 \prod_{i=1}^4\widetilde\Psi_{f_i}(c_i)\,d\tau.
\end{equation}
The assertion is multilinear and the error is uniform when the finitely many
coefficients and positive exponents range over a fixed compact set.

\begin{proof}
Write $R=\log U$ and
\[
 W_{f,R}(c)=R^{1/2}\sum_{n\le e^{Rc}}\frac{\lambda(n)}n
 f\!\left(c-\frac{\log n}{R}\right).
\]
Stieltjes summation gives the exact identity
\[
 W_{f,R}(c)=R^{1/2}f(0)L(e^{Rc})+
 R^{1/2}\int_0^c L(e^{R(c-v)})f'(v)\,dv.
\]
the scalar Mertens-sum and power estimates of Sections~\ref{sec:transfer}
and~\ref{sec:weights} imply local uniform convergence to
$\widetilde\Psi_f(c)$.  By \eqref{eq:psd-transform-closed}, the absolute value
is bounded by a fixed finite sum of terms $(c+R^{-1})^{-1/2}$ and
$(c+R^{-1})^{a-1/2}$.  The polyhedral majorant argument of
Section~\ref{sec:transfer} therefore gives a common integrable boundary
majorant, including every face on which one or more deficits $c_i$ vanish.

Expanding the four positive cross factors of the polar model produces the
positive edge measures of the residue calculation and the four signed tails
$W_{f_i,R}(c_i)$.  Interior convergence and the preceding majorant give
\eqref{eq:psd-mixed} for the polar model, uniformly when the coordinate caps lie
near $1$.  Finally restore the regular Euler factor $A$.  Its coefficients
satisfy weighted $\ell^1$ summability, the tail $\max r_i>e^{\sqrt R}$ is
$o(R^{-1})$, and on the complementary range the uniform polar limit and
continuity at $\mathbf b=\one$ apply, with $\sum a(\mathbf r)=A(0)=1$.  This
proves the claim.
\end{proof}

\begin{theorem}\label{thm:vector-transfer}
Let $q\in\mathcal P^J$, with $\|q(1)\|=1$ and $\|q(v)\|\le1$.  Define
$\widetilde\Psi_q$ componentwise by \eqref{eq:psd-transform}.  Then
\[
 S_q(1,U)=\frac{C[q]+o(1)}{\log U},
\]
where
\begin{equation}\label{eq:psd-Cq}
 C[q]=\frac1{\Gamma(1/4)^4}\int_\Omega
 \prod_{e\in E}\tau_e^{-3/4}
 \langle\widetilde\Psi_q(c_1),\widetilde\Psi_q(c_2)\rangle
 \langle\widetilde\Psi_q(c_3),\widetilde\Psi_q(c_4)\rangle\,d\tau.
\end{equation}
The functional depends only on the positive-semidefinite kernel
$K(u,v)=\langle q(u),q(v)\rangle$ and is convex quadratic in that kernel.
\end{theorem}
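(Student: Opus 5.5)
The plan is to reduce Theorem~\ref{thm:vector-transfer} to the mixed four-profile transfer by multilinearity. With $\mathbf b_n=\lambda(n)q(v_n)$ one has $C_{\ell m}=\lambda(\ell)\lambda(m)\langle q(v_\ell),q(v_m)\rangle=\sum_{j=1}^J\lambda(\ell)\lambda(m)q_j(v_\ell)q_j(v_m)$. Substituting this into \eqref{eq:psd-SC} and expanding the square as a product of two copies of the inner sum gives the exact finite identity
\[
 S_q(1,U)=\sum_{j,k=1}^J S_{q_j,q_j;q_k,q_k}(U),
\]
in the notation of \eqref{eq:psd-Smix}. Each component $q_j$ lies in $\mathcal P$, so there are finitely many fixed profiles. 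The mixed four-profile transfer then applies to every term, giving $S_{q_j,q_j;q_k,q_k}(U)=(\mathcal C(q_j,q_j,q_k,q_k)+o(1))/\log U$. Summing over the $J^2$ pairs, a finite sum, keeps the error $o(1/\log U)$.

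Next I will identify the sum of constants with $C[q]$. From \eqref{eq:psd-mixed},
\[
 \sum_{j,k}\mathcal C(q_j,q_j,q_k,q_k)=\Gamma(\tfrac14)^{-4}\int_\Omega\prod_e\tau_e^{-3/4}\Bigl(\sum_j\widetilde\Psi_{q_j}(c_1)\widetilde\Psi_{q_j}(c_2)\Bigr)\Bigl(\sum_k\widetilde\Psi_{q_k}(c_3)\widetilde\Psi_{q_k}(c_4)\Bigr)d\tau .
\]
The inner sums are exactly the inner products in \eqref{eq:psd-Cq}. Interchanging the finite sum with the integral needs only that each summand is absolutely integrable. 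That follows from the majorant in the mixed transfer: each $|\widetilde\Psi_{q_j}(c)|$ is bounded by a finite sum of terms $c^{-1/2}$ and $c^{a-1/2}$ with $a>0$, and the sharp polyhedral integral (Lemma~\ref{lem:polyhedral-ui}) controls the worst case $\prod_i c_i^{-1/2}$.

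For the structural claims, I will use that $\widetilde\Psi$ is linear in $f$ by \eqref{eq:psd-transform}. Hence $\langle\widetilde\Psi_q(c),\widetilde\Psi_q(c')\rangle$ is the kernel $K$ acted on by the linear operator $\widetilde\Psi$ in each variable separately, namely
\[
 \frac1\pi\Bigl\{K(0,0)(cc')^{-1/2}+c^{-1/2}\!\int_0^{c'}\!(c'-v)^{-1/2}\partial_vK(0,v)\,dv+\cdots+\iint(c-u)^{-1/2}(c'-v)^{-1/2}\partial_u\partial_vK\,du\,dv\Bigr\}.
\]
This depends on $q$ only through $K$, and it is linear in $K$. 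Write $T_K(c,c')$ for this expression; then $C[q]=\Gamma(1/4)^{-4}\int\prod\tau_e^{-3/4}T_K(c_1,c_2)T_K(c_3,c_4)\,d\tau$, which is quadratic in $K$.

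Convexity is the step I expect to be the main obstacle, since the form $B(K,K')$ is not obviously a sum of squares. I would first try to recover it from the arithmetic side. The finite-$U$ quantity \eqref{eq:psd-SC} is manifestly a convex quadratic, being a sum of squares of linear functionals of $C_{\ell m}=\lambda(\ell)\lambda(m)K(v_\ell,v_m)$, and $C$ depends linearly on $K$. The constant $C[\cdot]$ is the limit of $\log U$ times these forms, and applying the same multilinear transfer to bilinear cross terms in two kernels gives the limit of the associated bilinear forms. So the polarisation $K\mapsto C[K]$ is a pointwise limit of nonnegative quadratic forms on the span of profile kernels, and therefore $C[tK+(1-t)K']\le tC[K]+(1-t)C[K']$ persists in the limit. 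I would make this precise by extending the mixed transfer to general finite-rank symmetric kernels, using linear combinations of $f_i(u)f_j(v)$, which are not necessarily positive semidefinite.
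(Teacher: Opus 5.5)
Your proposal is correct and follows essentially the same route as the paper. Both expand \eqref{eq:psd-SC} into the $J^2$ mixed sums $S_{q_j,q_j;q_k,q_k}$, apply the mixed four-profile transfer termwise and sum to obtain the two inner products, and get convexity by passing the finite-$U$ sum-of-squares structure to the limit. You also spell out details the paper leaves implicit: the integrable majorant that justifies the interchange, and the explicit dependence on $K$. The cross-term extension you plan is already covered, since the mixed transfer is stated for arbitrary quadruples $(f_1,\dots,f_4)$, and it is not even needed: $tK+(1-t)K'$ is itself the kernel of the concatenated profile $(\sqrt t\,q,\sqrt{1-t}\,q')$.
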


\begin{proof}
Write $q=(q_1,\ldots,q_J)$.  Expanding \eqref{eq:psd-SC} gives a sum over
$r,s$ of products of two inner sums in $q_r$ and $q_s$.  Apply the mixed
four-profile transfer above to $(q_r,q_r,q_s,q_s)$ and sum over $r,s$; this
yields the two inner products in \eqref{eq:psd-Cq}.  For finite $U$,
Proposition~\ref{prop:psd-diagonal} is a sum of squares of linear functionals of
$K$, and passage to the limit preserves convexity.
\end{proof}

\begin{remark}\label{rem:endpoint}
The power profiles below satisfy $q(0)=0$, so their terminal coefficient
vanishes.  For fixed $\varepsilon>0$ put
$q_\varepsilon(v)=(q(v)+\varepsilon q(1))/(1+\varepsilon)$.  Then
$q_\varepsilon(1)=q(1)$, convexity of the unit ball gives
$\|q_\varepsilon(v)\|\le1$, and, when $U$ is prime,
$C_{1U}=\langle q_\varepsilon(1),\lambda(U)q_\varepsilon(0)\rangle
 =\lambda(U)\varepsilon/(1+\varepsilon)\ne0$, so
Proposition~\ref{prop:matrix-analytic} applies.  The corrected transform is
\begin{equation}\label{eq:psd-reg}
 \widetilde\Psi_{q_\varepsilon}(c)=\frac1{1+\varepsilon}
 \left\{\Psi_q(c)+\frac{\varepsilon q(1)}{\sqrt{\pi c}}\right\},
\end{equation}
where $\Psi_q$ is the derivative integral of the unregularised profile.  The
additional $c^{-1/2}$ term is controlled by the sharp-profile majorant, so
dominated convergence in \eqref{eq:psd-Cq} gives $C[q_\varepsilon]\to C[q]$ as
$\varepsilon\downarrow0$; no uniformity in $\varepsilon$ is required before the
$T\to\infty$ limit.
\end{remark}

For power directions $p_i(v)=v^{a_i}$, let $p(v)=(p_1(v),\ldots,p_m(v))^T$ and
take $M\succeq0$ with
\begin{equation}\label{eq:psd-Mconstraint}
 \one^TM\one=1,\qquad p(v)^TMp(v)\le1\quad(0\le v\le1).
\end{equation}
Then $K_M(u,v)=p(u)^TMp(v)$ and
\begin{equation}\label{eq:psd-CM}
 C(M)=\sum_{i,j,k,l=1}^m M_{ij}M_{kl}\,T(a_i,a_j,a_k,a_l),
\end{equation}
where
\begin{equation}\label{eq:psd-T}
 T(a_1,a_2,a_3,a_4)=\frac{\prod_{r=1}^4\Gamma(a_r+1)/\Gamma(a_r+1/2)}{\Gamma(1/4)^4}
 \int_\Omega\prod_{e\in E}\tau_e^{-3/4}\prod_{r=1}^4c_r^{a_r-1/2}\,d\tau.
\end{equation}
Thus a fixed power basis gives a convex quadratic semidefinite programme.  The
tensor reduction and rigorous evaluation are recorded in
Appendix~\ref{app:psd}.

\begin{theorem}\label{thm:three-square}
There is an explicit nine-direction profile $q_3:[0,1]\to\R^3$, given in
Appendix~\ref{app:psd}, with $\|q_3(v)\|\le1$ on $[0,1]$ and
\[
 C[q_3]=0.6567752140190419405677628751\ldots
\]
with rigorous interval radius below $10^{-17}$.  The corresponding sum of three
squared mollifiers satisfies $\kappazero>0.0700162$.
\end{theorem}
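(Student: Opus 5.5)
The proof has three parts: construct the profile, certify the constant, and feed it into the analytic machinery. First I would write $q_3(v)=Bp(v)$ with $p(v)=(v^{a_1},\ldots,v^{a_9})^T$. Here $B$ is an explicit $3\times9$ real matrix and the exponents $a_r>0$ are drawn from a small grid around $\tfrac14$. The associated Gram matrix is then $M=B^TB\succeq0$ in the notation of \eqref{eq:psd-Mconstraint}. The normalisation $\one^TM\one=1$ is just $\|q_3(1)\|=1$, a finite rational-arithmetic check. The harder constraint is $\|q_3(v)\|^2=p(v)^TMp(v)\le1$ on $[0,1]$. I would verify it rigorously by interval subdivision of $[0,1]$, using Taylor or monotonicity bounds for the powers. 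Near $v=0$ every component is small, since all $a_r>0$, and near $v=1$ the value is exactly $1$ at the endpoint. So I would expand $1-p(v)^TMp(v)$ near $v=1$ and check that its one-sided derivative there has the right sign.

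Next, by Theorem~\ref{thm:vector-transfer} and \eqref{eq:psd-CM}, $C[q_3]=\sum M_{ij}M_{kl}T(a_i,a_j,a_k,a_l)$, a finite sum of at most $9^4$ tensor entries, reduced by symmetry. Each $T$ would be evaluated as in the quarter-power case. Integrating out $\tau_{13}$ and $\tau_{24}$ via Euler's integral, the generalisation of \eqref{eq:Ha} with unequal exponents $q_r=a_r+\tfrac12$ gives one-dimensional integrals of products of ${}_2F_1$'s. These are analogous to \eqref{eq:Careduction}, though for unequal exponents the two inner integrals carry different parameters. Each can be expanded as a series with non-negative coefficients times Beta factors. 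The tails are then bounded exactly as for $\mathcal C_{1/4}$: monotone Beta factors and a closed-form total of coefficients at $t=1$ from Gauss's theorem. Arb ball arithmetic then gives each $T$ with tiny radius, and summing gives $C[q_3]$ with radius below $10^{-17}$. This certification is the main obstacle. The unequal-exponent reduction needs a correct closed form for the coefficient sums at $t=1$, or else an independent tail bound. The series converge slowly near $t=1$ because of the $(1-t)^{-1/2}$-type singularity. If the closed form for the total is unavailable, I would bound the tail instead by an explicit majorant of the coefficients, of order $n^{-3/2}$, obtained from the hypergeometric asymptotics. The truncation order $N$ would then be raised until the radius is small enough.

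Finally, I would transfer this to zeros. Regularise as in Remark~\ref{rem:endpoint}, replacing $q_3$ by $q_{3,\varepsilon}$, so that $C_{1U}=\lambda(U)\varepsilon/(1+\varepsilon)\neq0$ for prime $U$. Then \eqref{eq:psd-C1U} holds with $\log^+(1/|C_{1U}|)=O_\varepsilon(1)$, and \eqref{eq:psd-constraints} follows from $\|q_{3,\varepsilon}\|\le1$ together with $|\lambda(n)|\le1$. Proposition~\ref{prop:matrix-analytic} then gives $\kappa_0\ge2\theta/(4e\,C[q_{3,\varepsilon}])$. Following the order of limits of Proposition~\ref{prop:two-stage}, I would let $T\to\infty$, then $\varepsilon\downarrow0$ using $C[q_{3,\varepsilon}]\to C[q_3]$, then $\theta\uparrow\tfrac14$. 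This yields $\kappa_0\ge1/(8e\,C[q_3])$. Substituting the certified upper bound for $C[q_3]$ gives $1/(8e\cdot0.65677521402)>0.0700162$, which is checked by interval arithmetic.
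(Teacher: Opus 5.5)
Your architecture matches the paper's: an admissibility certificate for $q_3$, the contraction $C[q_3]=\sum M_{ij}M_{kl}T(a_i,a_j,a_k,a_l)$ through the one-dimensional reduction of Proposition~\ref{prop:tensor-reduction}, then the $\varepsilon$-regularisation of Remark~\ref{rem:endpoint} with the limits $T\to\infty$, $\varepsilon\downarrow0$, $\theta\uparrow\tfrac14$. Your last paragraph is exactly the paper's argument.

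For admissibility, the paper substitutes $x=v^{1/50}$. Then $S_3-\|L_3u(x)\|^2$ is a rational polynomial of degree $512$, and the paper checks its Bernstein coefficients. Your subdivision would also work, but it should be done in $x$. With the actual smallest exponent $0.02$, the components are small only for astronomically small $v$. Note also that the stated constant belongs to the paper's specific $L_3$. The directions there are $a_i=2^i/50\in[0.02,5.12]$, not a grid near $\tfrac14$. A profile you build yourself would have a different $C$, with no argument that it is below $0.656775$.

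The genuine gap is the certification of $C[q_3]$. The series-and-monotone-Beta scheme you borrow from the $\mathcal C_{1/4}$ computation converges only algebraically. The tail bound $B(N+2,\tfrac12)\bigl(\sum_n q_n-\sum_{n\le N}q_n\bigr)$ is at best $O(1/N)$. This is why the paper's $N=700$ run pins $\mathcal C_{1/4}$ only to width about $7.5\times10^{-4}$.

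Here the situation is much worse, because the contraction cancels heavily. The entries of $M_3=L_3^TL_3/S_3$ are of order $10^2$, yet $\mathbf 1^TM_3\mathbf 1=1$ and $C[q_3]\approx0.66$. So the $1035$ tensor orbits must be known to many more digits than $C[q_3]$ itself; the paper uses $40$. Consider even the weaker accuracy actually needed for $\kappa_0>0.0700162$, namely $C[q_3]$ to within about $3.6\times10^{-7}$, let alone radius $10^{-17}$. An $O(1/N)$ scheme would need an infeasible $N$ to reach either.

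Your two tail devices also fail for the actual exponents:
\begin{enumerate}
 \item For $a>\tfrac12$ the parameter $\tfrac12-a$ of $F_{a\mid b}$ is negative. Its Taylor coefficients then change sign, so the non-negativity behind the monotone-Beta bound is lost.
 \item The coefficients of $F_{a\mid b}$ decay like $n^{-1-a-b}$, for example $n^{-1.04}$ when $a=b=0.02$. So the proposed $n^{-3/2}$ majorant is false.
\end{enumerate}

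What is missing is an exponentially convergent, rigorously enclosed evaluation of the integrals in Proposition~\ref{prop:tensor-reduction}. This is the paper's validated interval integration. It must treat the endpoint singularities explicitly: $t^{a_2+a_3-1/2}$ at $0$, and $(1-t)^{-1/2}$ and $(1-t)^{a+b}$ at $1$. One way is the $t\mapsto1-t$ connection formulas for $F_{a\mid b}$ and $H_A$; another is a rigorous asymptotic expansion of the series tail.
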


\begin{proof}
Appendix~\ref{app:psd} gives the exact profile, its Bernstein admissibility
certificate and the interval enclosure of $C[q_3]$.  For fixed $\varepsilon>0$,
Proposition~\ref{prop:matrix-analytic} and Theorem~\ref{thm:vector-transfer}
give the diagonal asymptotic with $C[q_\varepsilon]$.  Letting $T\to\infty$,
then $\varepsilon\downarrow0$ via Remark~\ref{rem:endpoint}, and finally
$\theta\uparrow1/4$, gives
$\kappazero\ge 1/(8eC[q_3])>0.0700162$.
\end{proof}

The improvement is a profile-kernel effect rather than a local Euler-factor
correction.  If $C(z)=\sum_{\nu\ge0}c_\nu z^\nu$ with $C(0)=1$ and
$C(z)^2=\sum_{r\ge0}a_rz^r$, then with $A_r=\sum_{k\le r}a_k$ the local diagonal
factor is $(1-p^{-1})\sum_{r\ge0}p^{-r}A_r^2\ge1-p^{-1}$, with equality precisely
when $C(z)^2=1-z$, that is, for the local coefficients of $\zeta(s)^{-1/2}$.
Reciprocal-square-root arithmetic is therefore already prime-by-prime optimal
within the multiplicative scalar class; the gain above comes from correlating
several logarithmic profiles.

This inequality is sharper than it looks, and it fixes what the
improvements in this paper should be credited to.  In the notation of
Section~\ref{sec:Zh-exact} the local factor is
$\mathcal A_{b,p}(\mathbf0)=\sum_{r\ge0}A_r^2p^{-r}$, so the arithmetic factor
$\mathcal A_b(\mathbf 0)$ of \eqref{eq:general-transfer} is at least $1$ for
every multiplicative $b$, with equality if and only if $b*b=\mu$.  The
convolution identity \eqref{eq:convolution}, which entered as a device for
removing the same-side poles, is thus exactly the condition for the arithmetic
factor to be optimal, and $\lambda$ is its unique solution.  Consequently the
whole of the remaining freedom lies in $\mathcal C_a$, and the improvements
recorded below are profile effects and nothing else.

The comparison with Zhuravlev is best read in that light.  His constant
$1.8505\ldots$ is Wirsing's upper bound for the diagonal; the diagonal itself,
at his coefficients, is $0.8052\ldots$ by Remark~\ref{rem:Zh-exact}.  Of the
factor $2.818$ separating Corollary~\ref{cor:Zh-corrected} from
Theorem~\ref{thm:three-square}, a factor $2.298$ is thus the passage from that
bound to an exact evaluation and a factor $1.226$ is the better profile.  The
sharp coefficients of Section~\ref{sec:sharp}, taken with a sharp cutoff, give
$0.8869\ldots$, so they sit about ten per cent below Zhuravlev's choice, whose
normalisation carries the $a=\tfrac12$ profile without comment; the arithmetic
here overtakes his only at $a=\tfrac14$ and beyond.

\subsection{Critical-line proportions}
These are the proportions the transfer theorems above deliver; they are what
Section~\ref{sec:intro-summary} quotes.

\begin{corollary}\label{cor:Zh-corrected}
Assume the coefficient estimate \eqref{eq:Zh-gamma} from Zhuravlev's Wirsing
calculation.  With the source normalisation
\(
 \aleph(s) Y_U(s)=2\operatorname{Re}\beta_U(s)
\)
and $\theta\uparrow\tfrac14$, his method proves
\[
 \kappa_0\ge \frac{2}{3\pi^2e}
 =0.0248493202\ldots.
\]
The printed bound $2/21$ results from omitting the term $-T\log2$ and is not a
consequence of the stated definitions.
\end{corollary}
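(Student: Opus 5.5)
The corollary follows by feeding the Wirsing estimate of Proposition~\ref{prop:Zh-arithmetic} into Proposition~\ref{prop:distilled} with $c=2$, $\gamma=\gamma_{\mathrm{Zh}}=3\pi^2/16$, and then letting $\theta\uparrow\tfrac14$. This requires checking the analytic hypotheses of Proposition~\ref{prop:distilled} for Zhuravlev's coefficients. I would do this through Proposition~\ref{prop:Zhflex}.

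First, fix $0<\theta<1/4$. For each large $T$, use Bertrand's postulate to choose a prime $U_T\in[T^\theta,2T^\theta]$, so that $\log U=\theta\log T+O(1)$. The elementary bounds of Section~\ref{sec:Zhcoeff} give $b_1=1$ and $|b_\ell|\le\prod_{p\mid\ell}p|f(p)|\le1$. The terminal estimate \eqref{eq:Zh-terminal} gives $b_U\ne0$ and $\log^+(1/|b_U|)\ll\log U\ll\log T$. Hence \eqref{eq:terminal-growth-condition} holds, because $U\log^3T=o(T)$ when $\theta<1/4$.

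Next, Proposition~\ref{prop:Zh-arithmetic} supplies \eqref{eq:Sb-gamma} with $\gamma=3\pi^2/16$. Proposition~\ref{prop:Zhflex} then yields the mean-square bound \eqref{eq:beta-second-moment} for $x=1-2\delta\asymp1/\log T$, together with $o(T)$ horizontal argument increments. There is one small point to check here. Proposition~\ref{prop:Zhflex} has an additive $o(1)$, whereas \eqref{eq:mean-general} asks for a multiplicative $1+o(1)$. Since the optimal choice is $x=ec^2\gamma/\log U$, the main term $\gamma/(x\log U)$ equals the constant $1/(ec^2)$. So an additive $o(1)$ is the same as a factor $1+o(1)$, and the two forms agree.

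The source normalisation \eqref{eq:critical-real-part} is $\aleph Y_U=2\Real\beta_U$, so $c=2$. Proposition~\ref{prop:distilled} then gives
\[
\kappa_0\ge \frac{2\theta}{e\cdot4\cdot(3\pi^2/16)}=\frac{8\theta}{3\pi^2e}.
\]
The left side does not depend on $\theta$, so letting $\theta\uparrow1/4$ gives $\kappa_0\ge 2/(3\pi^2e)=0.0248493\ldots$.

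For the final sentence of the corollary, I would track where $c$ enters. In Lemma~\ref{lem:rectangle-sign}, $c$ produces the term $-T\log c$. Dropping $-T\log 2$ amounts to optimising with $c=1$ in Proposition~\ref{prop:distilled}, which gives $8\theta/(3\pi^2e)\cdot4\to 8/(3\pi^2e)\approx0.0994$. That value rounds to the printed $2/21$, whereas the stated definitions force $c=2$.

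The only step requiring care is the application of Proposition~\ref{prop:Zhflex}, in particular the terminal-coefficient condition. Everything else is direct substitution.
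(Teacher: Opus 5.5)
Your proposal is correct and follows the paper's own proof: you check the hypotheses of Proposition~\ref{prop:Zhflex} from $|b_n|\le1$ and \eqref{eq:Zh-terminal}, then substitute $\gamma=3\pi^2/16$ and $c=2$ into Proposition~\ref{prop:distilled}, and finally let $\theta\uparrow\tfrac14$. The paper leaves two points implicit that you make explicit: the Bertrand choice of a prime $U$, and the fact that the additive $o(1)$ in \eqref{eq:beta-second-moment} matches the multiplicative form in \eqref{eq:mean-general}, which holds because $x\log U\asymp1$.
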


\begin{proof}
The coefficient bounds preceding Proposition~\ref{prop:Zh-arithmetic} give
\(|b_n|\le1\), while \eqref{eq:Zh-terminal} gives
\(\log^+(1/|b_U|)\ll\log U\).  Hence
\eqref{eq:terminal-growth-condition} holds for
\(U\asymp T^\theta\), \(\theta<1/4\), and
Proposition~\ref{prop:Zhflex} applies with
\(\gamma=3\pi^2/16\).  Proposition~\ref{prop:distilled}, with \(c=2\),
therefore gives
\[
 \kappa_0\ge
 \frac{2\theta}{e\,4\,(3\pi^2/16)}.
\]
Letting \(\theta\uparrow1/4\) yields \(2/(3\pi^2e)\).
\end{proof}

\begin{theorem}
\label{thm:critical}
Use Zhuravlev's original normalisation
\[
 \aleph(s)Y_U(s)=2\operatorname{Re}\beta_U(s)
 \qquad (\operatorname{Re}s=\tfrac12).
\]

\begin{enumerate}[label=\textup{(\roman*)}]
 \item For the sharp coefficients $b_n=\lambda(n)$,
 \[
  \kappa_0\ge
  \frac{1}{8e\mathcal C_{\mathrm{res}}}
  =\frac{\Gamma(\tfrac34)^4}{16e}
  =0.0518466520\ldots.
 \]

 \item For the regularised quarter-power coefficients and then
 $\varepsilon\downarrow0$,
 \[
  \kappa_0\ge \frac{1}{8e\mathcal C_{1/4}}
  >0.0677586.
 \]

 \item For the three-square positive-semidefinite profile of
 Theorem~\ref{thm:three-square},
 \[
  \kappa_0>0.0700162.
 \]
\end{enumerate}
\end{theorem}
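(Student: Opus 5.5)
The plan is to feed each of the three arithmetic constants into Proposition~\ref{prop:distilled} with $c=2$. That proposition gives $\kappa_0\ge 2\theta/(4e\gamma)=\theta/(2e\gamma)$, and letting $\theta\uparrow\tfrac14$ turns this into $\kappa_0\ge 1/(8e\gamma)$. The order of limits is the one fixed in Proposition~\ref{prop:two-stage}: first $T\to\infty$ along primes $U_T\in[T^\theta,2T^\theta]$ chosen by Bertrand's postulate, then $\varepsilon\downarrow0$ where a regularisation is used, and last $\theta\uparrow\tfrac14$. In each case the hypotheses of Proposition~\ref{prop:distilled} come from Proposition~\ref{prop:Zhflex}, or from its matrix analogue Proposition~\ref{prop:matrix-analytic}. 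Those results supply the mean-square bound \eqref{eq:mean-general} and the $o(T)$ horizontal increments once we have $|b_n|\le1$, $b_1=1$, the terminal condition \eqref{eq:terminal-growth-condition}, and a diagonal bound $S_b(1,U)\le(\gamma+o(1))/\log U$.

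\emph{Part (i).} Here $b_n=\lambda(n)$. We have $|\lambda(n)|\le1$ and $\lambda(1)=1$. For prime $U$, $\lambda(U)=-\tfrac12$, so $\log^+(1/|b_U|)=\log2$ and \eqref{eq:terminal-growth-condition} holds trivially. Theorem~\ref{thm:transfer} gives $\gamma=\Cres$, and Theorem~\ref{thm:C} evaluates it as $\Cres=2/\Gamma(\tfrac34)^4$. Then $1/(8e\Cres)=\Gamma(\tfrac34)^4/(16e)$. No $\varepsilon$ is needed in this case.

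\emph{Part (ii).} Fix $\varepsilon>0$ and use the coefficients \eqref{eq:Peps} with $a=\tfrac14$. By \eqref{eq:last-eps} the terminal coefficient is bounded below independently of $T$, so Proposition~\ref{prop:Zhflex} applies. The diagonal constant $\mathcal C_{1/4,\varepsilon}$ comes from \eqref{eq:eps-diagonal}, which yields $\kappa_0\ge\theta/(2e\,\mathcal C_{1/4,\varepsilon})$ for each fixed $\theta,\varepsilon$. Since $\kappa_0$ does not depend on $\varepsilon$, we may pass to $\varepsilon\downarrow0$ using \eqref{eq:Caeps-limit} and then let $\theta\uparrow\tfrac14$. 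The interval enclosure $\mathcal C_{1/4}<0.6786576$ then gives the value \eqref{eq:quarter-proportion}.

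\emph{Part (iii).} This is Theorem~\ref{thm:three-square}, so we only check its ingredients. Remark~\ref{rem:endpoint} gives $C_{1U}=\lambda(U)\varepsilon/(1+\varepsilon)\neq0$, so Proposition~\ref{prop:matrix-analytic} applies for fixed $\varepsilon$ with $\gamma=C[q_\varepsilon]$ from Theorem~\ref{thm:vector-transfer}. The Bernstein certificate of Appendix~\ref{app:psd} ensures the constraint $\|q_3\|\le1$ behind \eqref{eq:psd-constraints}. Letting $\varepsilon\downarrow0$ gives $C[q_\varepsilon]\to C[q_3]$. Then $1/(8e\cdot0.65677521\ldots)>0.0700162$.

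The main obstacle is the uniformity underlying parts (ii) and (iii). The analytic estimates must hold with the regularised coefficients for fixed $\varepsilon$, and the diagonal transfer constants must converge as $\varepsilon\downarrow0$. We can avoid any estimate uniform in $\varepsilon$ because $\kappa_0$ is a fixed number: each fixed $\varepsilon$ already gives a valid lower bound, and we then take the supremum over $\varepsilon$. The only genuinely delicate input is the dominated-convergence step behind \eqref{eq:Caeps-limit} and Remark~\ref{rem:endpoint}. There the extra $c^{-1/2}$ terms must be controlled by the sharp polyhedral majorant of Lemma~\ref{lem:polyhedral-ui}, and we take that lemma as given.
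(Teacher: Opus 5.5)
Your proposal is correct and follows the paper's proof essentially step for step. You feed each diagonal constant ($\Cres$, $\mathcal C_{1/4,\varepsilon}$, $C[q_\varepsilon]$) through Proposition~\ref{prop:Zhflex} or Proposition~\ref{prop:matrix-analytic} into Proposition~\ref{prop:distilled} with $c=2$, obtaining $\kappa_0\ge\theta/(2e\gamma)$, and then take $T\to\infty$, $\varepsilon\downarrow0$ and $\theta\uparrow\tfrac14$ in that order; your explicit hypothesis checks ($\lambda(U)=-\tfrac12$, \eqref{eq:last-eps}, and $C_{1U}\ne0$ from Remark~\ref{rem:endpoint}) only spell out what the paper states more tersely.
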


\begin{proof}
Fix $0<\theta<\tfrac14$.  For the sharp coefficients, choose a prime
$U\asymp T^\theta$.  The coefficient hypotheses in
Proposition~\ref{prop:Zhflex} hold because $|\lambda(n)|\le1$ and
$\lambda(U)=-\tfrac12$.  Theorem~\ref{thm:transfer} gives
\[
 S_\lambda(U)=\frac{\mathcal C_{\mathrm{res}}+o(1)}{\log U}.
\]
Propositions~\ref{prop:Zhflex} and~\ref{prop:distilled}, with $c=2$, therefore yield
\[
 \kappa_0\ge \frac{\theta}{2e\mathcal C_{\mathrm{res}}}.
\]
Letting $\theta\uparrow\tfrac14$ proves part~\textup{(i)}.  The exact form
follows from Theorem~\ref{thm:C}.

For the power weight, fix $\varepsilon>0$ and use the prime selection in
Proposition~\ref{prop:two-stage}.  Equations \eqref{eq:eps-diagonal} and
\eqref{eq:Caeps-limit} give, successively,
\[
 \kappa_0\ge \frac{\theta}{2e\mathcal C_{a,\varepsilon}},
 \qquad
 \kappa_0\ge \frac{\theta}{2e\mathcal C_a}.
\]
Letting $\theta\uparrow\tfrac14$, taking $a=\tfrac14$, and using the
certified upper bound for $\mathcal C_{1/4}$ proves part~\textup{(ii)}.
Part~\textup{(iii)} is Theorem~\ref{thm:three-square}.
\end{proof}

\section*{Acknowledgements}
The author thanks the Heilbronn Institute for Mathematical Research for its
support.

\section*{Declaration on the use of AI}
The English translation of Zhuravlev's 1974 paper was produced with the
assistance of a large language model.  The positive-semidefinite optimisation of
Section~\ref{sec:psd}, and the interval-arithmetic verification of its constant
in Appendix~\ref{app:psd}, were carried out with the same assistance.  The
author has checked these outputs and is responsible for the content of the
paper.

\appendix
\makeatletter
\@namedef{pc@app@section}{Appendix~}%
\renewcommand\@seccntformat[1]{\csname pc@app@#1\endcsname\csname the#1\endcsname\quad}
\makeatother
\section{Uniform bounds for Lavrik's analytic half}
\label{app:Lavrik-horizontal}

This appendix supplies the large-disk estimates used in
Lemma~\ref{lem:coefvariation}.  They are needed, and cannot be quoted from
the literature, for the following reason.  The Jensen step of
Section~\ref{sec:Zhreduction} is applied on a disk whose radius grows with
$T$, and it therefore requires a bound on $\Lambda_U(s)$ that is uniform over
that whole disk and uniform in the mollifier coefficients $b_n$.  The
approximate functional equations available in the literature are stated in a
fixed vertical strip, and extrapolating one of them to a growing disk is
exactly the step that is not justified.  The point of using the explicit
regularised incomplete-gamma kernel is that every singularity and every
dependence on the terminal coefficient is visible; no fixed-strip approximate
functional equation is extrapolated to the Jensen disk.

\subsection{The Mellin contour and its singularities}

Use the principal logarithm in \(X^{-z}=\exp(-z\log X)\), where
\(|\arg X|<\pi/4\), and begin in \eqref{eq:Lavrik-J} on the vertical line
\(\Real z=\Delta>\max(0,\Real s)\).  For \(X>0\) and \(\Real s>0\),
differentiation under the integral and Mellin inversion give
\[
 \frac{\partial}{\partial X}J(s,X)=-2X^{s-1}e^{-X^2}.
\]
On the initial line \(J(s,X)=O(X^{-\Delta})\) as \(X\to+\infty\).
Integration from \(X\) to infinity therefore gives
\[
 J(s,X)=\Gamma(s/2,X^2).
\]
Analytic continuation in \(X\) through \(|\arg X|<\pi/4\), and then in
\(s\), proves the identity in \eqref{eq:Lavrik-F} without a limiting shift
to a receding left vertical line.

For completeness, a finite shift of the Mellin line records the
singularities used below.  For generic \(s\), shifting past \(N\) gamma
poles crosses exactly
\[
 z=0,\qquad z=-s-2k\quad(0\le k<N).
\]
Their respective residues are
\[
 \Gamma(s/2),\qquad
 -\frac{2(-1)^kX^{s+2k}}{k!(s+2k)}.
\]
Stirling's formula justifies each finite shift and makes its two closing
horizontal pieces tend to zero.  The quotient
\(F(s,X)=\Gamma(s/2,X^2)/\Gamma(s/2)\) is entire in \(s\).  The only singularity of
\(\Lambda_U^\pm(s)\) in \eqref{eq:Lavrik-half-explicit} is the displayed
simple pole at \(s=1\); consequently the only singularities of
\(G_\pm(s)\) are at \(1\) and \(1+2it\).  This proves, in particular, that
\[
 H_\pm(s)=(s-1)(s-1-2it)G_\pm(s)
 \tag{A.1}\label{eq:Hpm}
\]
is entire.  The residue calculation also records all poles crossed in passing
from the Mellin contour to the kernel representation used below.

\subsection{Uniform kernel estimates}

Put \(\phi_T=\pi/2-1/T\), and write
\(\rho_T(v)=\rho_T^+\) for \(v>0\) and
\(\rho_T(v)=\rho_T^-\) for \(v<0\).  If \(s=\sigma+iv\),
\(|v|\asymp T\), and \(|\sigma|=o(T^{1/2})\), integration along the ray
\(u=r e^{\operatorname{sgn}(v)i\phi_T}\), followed by uniform Stirling,
gives
\begin{equation}\label{eq:F-ray-bound}
 |F(s,(\pi\rho_T(v))^{1/2}\xi)|
 \le C^{|\sigma|+1}T^{1/2}
 \int_{c\xi^2/T}^{\infty}e^{-u}u^{\sigma/2-1}\,du .
\end{equation}
Here and below \(c,C>0\) are absolute.  Indeed, the numerator contributes
\(e^{-|v|\phi_T/2}\), while
\[
 |\Gamma((\sigma+iv)/2)|^{-1}
 \ll T^{1/2-\sigma/2}e^{\pi|v|/4};
\]
because \(\pi/2-\phi_T=1/T\), the two exponential factors cancel up to an
absolute constant.  This is the reason that the phases \(\rho_T^\pm\) were
fixed as in Section~\ref{sec:Zsource}.

For \(\sigma>0\), the complementary lower-gamma integral gives, uniformly
for \(0<\xi\le1\),
\begin{equation}\label{eq:F-one-bound}
 F(s,(\pi\rho_T(v))^{1/2}\xi)
 =1+O\!\left\{
 C^\sigma T^{1/2}\left(\frac{\xi^2}{T}\right)^{\sigma/2}
 \right\}.
\end{equation}
The estimates remain uniform if \(v\) varies by \(o(T)\).  They are direct
estimates of the old contour after the complete residue calculation above;
there are no unrecorded vertical or horizontal contour pieces.

We shall also use two elementary consequences of
\eqref{eq:alambda-explicit}.  First, for \(\sigma\ge2\),
\begin{equation}\label{eq:frequency-tail}
 \sum_{\xi>1}|a_\xi|\xi^{-\sigma}\ll U\log(2U).
\end{equation}
To see this, fix \(\ell,m\).  If \(m\ge\ell\), integral comparison in
\(n>m/\ell\) gives a contribution
\(\ll1/\ell+1/m\); if \(m<\ell\), the same conclusion follows directly
from \((m/\ell)^\sigma\zeta(\sigma)/m\).  Summation over \(\ell,m\le U\)
proves \eqref{eq:frequency-tail}.  Second, we prove explicitly the frequency
sum needed on the Jensen disk.  Whenever \(|\sigma|=o(T^{1/2})\),
\begin{equation}\label{eq:frequency-disk}
 \sum_{\xi>0}|a_\xi|\xi^{-\sigma}
 |F(s,(\pi\rho_T(v))^{1/2}\xi)|
 \le \exp\{C(|\sigma|+U)\log(2T)\}.
\end{equation}
Indeed, put \(q=|\sigma|\) and \(y=c\xi^2/T\).  Every frequency in
\eqref{eq:alambda-explicit} satisfies \(\xi\ge1/U\), so
\(y\gg(TZ^2)^{-1}\).  Splitting the incomplete-gamma integral at \(1\)
and at \(q+2\) gives the elementary uniform estimates
\begin{align*}
 \int_y^\infty e^{-u}u^{\sigma/2-1}\,du
 &\ll L\exp\{Cq\log(q+2)\}e^{-cy},&&\sigma\ge0,\\
 \int_y^\infty e^{-u}u^{\sigma/2-1}\,du
 &\ll Ly^{-q/2}e^{-cy},&&\sigma<0,
\end{align*}
where \(L=\log(2T)\).  In the second line the factor \(y^{-q/2}\)
cancels \(\xi^q\) after \eqref{eq:F-ray-bound}; in the first line we use
\(\xi^{-q}\le U^q\).  Since \(q=o(T^{1/2})\), in either case
\begin{equation}\label{eq:frequency-kernel-crude}
 \xi^{-\sigma}|F(s,(\pi\rho_T(v))^{1/2}\xi)|
 \le \exp\{C(q+1)L\}e^{-c\xi^2/T}.
\end{equation}
Finally, expanding \(a_\xi\) and using
\(\sum_{n\ge1}e^{-A n^2}\ll1+A^{-1/2}\),
\begin{align*}
 \sum_{\xi>0}|a_\xi|e^{-c\xi^2/T}
 &\le\sum_{\ell,m\le U}\frac1m\sum_{n\ge1}
       e^{-c(n\ell/m)^2/T}\\
 &\ll\sum_{\ell,m\le U}
       \left(\frac1m+\frac{\sqrt T}{\ell}\right)
 \ll U\sqrt T\log(2U).
\end{align*}
Combining this with \eqref{eq:frequency-kernel-crude} proves
\eqref{eq:frequency-disk}, with room to spare.

\begin{proposition}
\label{prop:Lavrik-Jensen-bounds}
Let the hypotheses of Lemma~\ref{lem:coefvariation} hold and put
\[
 \sigma_0=A U\{L^2+\eta_U+1\}+\sigma_1+\sigma_2,
 \qquad s_0=\sigma_0+it,
\]
where \(A\) is a sufficiently large absolute constant.  Then
\begin{equation}\label{eq:Jensen-upper}
 \max_{|s-s_0|\le2\sigma_0}\log^+|H_\pm(s)|
 \ll ZL\{L^2+\eta_U+1\}.
\end{equation}
For at least one choice of sign,
\begin{equation}\label{eq:Jensen-centre}
 -\log|H_\pm(s_0)|
 \ll ZL\{L^2+\eta_U+1\}.
\end{equation}
The constants are uniform for \(T\le t\le2T\).
\end{proposition}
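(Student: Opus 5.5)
We prove the two estimates separately, working throughout with the explicit representation \eqref{eq:Lavrik-half-explicit} of \(\Lambda_U^\pm\) and the kernel bounds \eqref{eq:F-ray-bound}--\eqref{eq:frequency-disk}. Note that \(Z\) in the displayed bounds denotes the mollifier length \(U\), as in Zhuravlev's notation.

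\emph{Upper bound \eqref{eq:Jensen-upper}.} Every \(s\) in the disk \(|s-s_0|\le2\sigma_0\) has \(|\Real s|\le3\sigma_0\ll U\{L^2+\eta_U+1\}\). Since \(U\ll T^{1/4}\) and \(\eta_U\ll L\), this is \(o(T^{1/2})\), and \(\Imag s\) differs from \(t\) by \(O(\sigma_0)=o(T)\). The kernel estimates therefore apply to both \(\Lambda_U^+(s)\) and \(\Lambda_U^-(s-2it)\); the latter has ordinate near \(-t\) and uses the phase \(\rho_T^-\).
\begin{itemize}
\item The series part is bounded directly by \eqref{eq:frequency-disk}, giving \(\exp\{C(3\sigma_0+U)L\}\).
\item For the polar term, use \(|M_U(0)M_U(1)|\le U^2\), \(|(\pi\rho_T)^{(s-1)/2}|\le e^{C|s|}\), and \(|\Gamma(s/2)|^{-1}\le\exp\{C(|\sigma|+1)L\}e^{\pi|v|/4}\). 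The factor \(e^{\pi|v|/4}\) is cancelled by \(|\rho_T^{(s-1)/2}|\approx e^{-\phi_T v/2}\), exactly as in \eqref{eq:F-ray-bound}.
\item The factor \((s-1)(s-1-2it)\) costs only \(O(\log T)\) in the logarithm.
\end{itemize}
Taking logarithms, \(\log^+|H_\pm|\ll(\sigma_0+U)L\ll UL\{L^2+\eta_U+1\}\).

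\emph{Centre bound \eqref{eq:Jensen-centre}.} At \(s_0=\sigma_0+it\) with \(\sigma_0\) huge, the smallest frequency dominates. That frequency is \(\xi=1/U\), and since \(U\) is prime it arises only from \((n,\ell,m)=(1,1,U)\), with coefficient \(a_{1/U}=b_U/U\).
\begin{itemize}
\item By \eqref{eq:F-one-bound}, \(F(s_0,\cdot\,\xi)=1+O(C^{\sigma_0}T^{1/2}(\xi^2/T)^{\sigma_0/2})\), which is \(1+o(1)\) for \(\xi\le1\) since \(T^{-\sigma_0/2}\) wins.
\item Every other frequency satisfies \(\xi\ge1/(U-1)\) or so. Frequencies \(\le1\) are rationals with denominator \(\le U\) distinct from \(1/U\), hence \(\ge 1/(U-1)\). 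Their total contribution relative to the main term is therefore at most \(\sum|a_\xi|(U\xi)^{-\sigma_0}|F|\le U^{O(1)}(1-1/U)^{\sigma_0}\) times \(U/|b_U|\).
\item Frequencies \(\xi>1\) are handled by \eqref{eq:frequency-tail} and \eqref{eq:F-ray-bound}.
\end{itemize}
Because \(\sigma_0\ge AU\{L^2+\eta_U+1\}\), we have \((1-1/U)^{\sigma_0}\le e^{-A\{L^2+\eta_U\}}\). This beats both \(U^{O(1)}e^{\eta_U}\) and the polar term, whose size relative to \(U^{\sigma_0}\) carries \(T^{-\sigma_0/2}\)-type decay from \(1/\Gamma(s_0/2)\) against \(\pi^{\sigma_0/2}\). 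Hence
\[
\Lambda_U^+(s_0)=b_UU^{s_0-1}\{1+O(e^{-cL^2})\}.
\]
The conjugate term \(\Lambda_U^-(s_0-2it)\) similarly equals \(b_UU^{\bar s_0-1}\{1+o(1)\}\). Thus
\[
G_+(s_0)\approx b_UU^{\sigma_0-1}\cos(t\log U),\qquad G_-(s_0)\approx b_UU^{\sigma_0-1}\sin(t\log U).
\]
Since \(\max(|\cos|,|\sin|)\ge2^{-1/2}\), one sign gives \(|G_\pm(s_0)|\ge\tfrac12|b_U|U^{\sigma_0-1}\ge\tfrac12|b_U|\). Taking logarithms, \(-\log|H_\pm(s_0)|\le\eta_U+O(1)\).

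\emph{Main obstacle.} The delicate step is the domination at the centre. We must check uniformly in \(t\) and in the coefficients that the second-smallest frequency ratio \(U/(U-1)\), raised to \(\sigma_0\), suppresses the number of competing frequencies weighted by \(|F|\) via \eqref{eq:F-ray-bound}, the polar term, and the loss \(1/|b_U|\). This is precisely why \(\sigma_0\) carries the factor \(U\{L^2+\eta_U+1\}\). The residual factor \(T^{1/2}\) in \eqref{eq:F-one-bound} must also be absorbed via \(T^{-\sigma_0/2}\).
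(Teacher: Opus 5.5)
Your proposal is correct and follows essentially the same route as the paper. On the disk you use \eqref{eq:frequency-disk} together with the cancellation $e^{-\phi_T|v|/2}e^{\pi|v|/4}\ll1$ for the polar term. At the centre you isolate the unique frequency $1/U$ using the gap $\log\frac{U}{U-1}\ge\frac1U$, bound the $\xi>1$ tail and the polar term, and then use the conjugation identity to pass to whichever of the real or imaginary part is large.

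Two points need tidying. First, the bound $|(\pi\rho_T)^{(s-1)/2}|\le e^{C|s|}$ is useless as written, since $|s|\asymp T$; what you actually need, as your next clause indicates, is the exact modulus $\pi^{(\sigma-1)/2}e^{-\phi_T|v|/2}$. Second, the $\xi>1$ step should record the Stirling comparison $\log\bigl(C^{\sigma_0}T^{1/2}\Gamma(\sigma_0/2)/(|b_U|U^{\sigma_0-1})\bigr)\le-\tfrac{\sigma_0}{2}\log U+O(\sigma_0\log L+L+\eta_U)$, which is exactly where $\sigma_0\asymp UL^2=o(U^2)$ is used.
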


\begin{proof}
Since \(\eta_U\ll L\), we have
\(\sigma_0\ll ZL^2=o(T^{1/2})\).  Throughout the disk in
\eqref{eq:Jensen-upper}, the imaginary parts of \(s\) and \(s-2it\) have
absolute value \(\asymp T\).  Apply \eqref{eq:frequency-disk} to the two
halves.  The polar term in \eqref{eq:Lavrik-half-explicit} is bounded there
by the same right side, since
\(|M_U(0)M_U(1)|\ll U\log(2U)\).  The two regularising linear factors cost
only \(O(L+\log\sigma_0)\) on the logarithmic scale.  As
\(|\Real s|\le3\sigma_0\), this proves \eqref{eq:Jensen-upper}.

It remains to prove the centre estimate.  The smallest frequency in
\eqref{eq:alambda-explicit} is \(1/U\), it occurs only for
\((n,\ell,m)=(1,1,U)\), and its coefficient is \(b_U/U\).  Every other
frequency at most one is at least \(1/(U-1)\).  From
\eqref{eq:F-one-bound}, \eqref{eq:frequency-tail}, and Stirling,
\begin{align}
 \Lambda_U^+(s_0)
  ={}&b_ZZ^{s_0-1}+E_1+E_2+E_3,\label{eq:center-expansion}
 |E_1|\ll{}&U^3(U-1)^{\sigma_0},\notag\\
 |E_2|\ll{}&U\log(2U)\,C^{\sigma_0}T^{1/2}
              \Gamma(\sigma_0/2),\notag\\
 |E_3|\ll{}&U\log(2U)\,C^{\sigma_0}
              T^{-1/2-\sigma_0/2}.
 \notag
\end{align}
Here \(E_1\) contains the other frequencies at most one (and the uniform
error in replacing their kernels by one), \(E_2\) contains \(\xi>1\), and
\(E_3\) is the polar term.  The deliberately coarse factor \(U^3\) in
\(E_1\) dominates the total absolute coefficient mass in \(0<\xi\le1\).

Now \(|b_U|\ge e^{-\eta_U}\) and
\[
 \log\frac U{U-1}\ge\frac1U.
\]
Consequently, after division by \(|b_U|U^{\sigma_0-1}\), the logarithm of
the first error is at most
\[
 O(L+\eta_U)-\sigma_0\log\frac U{U-1}
 \le O(L+\eta_U)-A\{L^2+\eta_U+1\}.
\]
For the second error, Stirling's upper bound for
\(\Gamma(\sigma_0/2)\), together with
\(\log\sigma_0=\log U+O(\log L)\), gives
\[
 \log\frac{|E_2|}{|b_U|U^{\sigma_0-1}}
 \le-\frac{\sigma_0}{2}\log U
      +O(\sigma_0\log L+\sigma_0+L+\eta_U),
\]
which tends to \(-\infty\); the third error is smaller still.  Taking \(A\)
large enough in the first comparison therefore yields
\begin{equation}\label{eq:center-dominance}
 \Lambda_U^+(s_0)=b_ZZ^{s_0-1}\{1+O(e^{-cL^2})\}.
\end{equation}
The conjugation identity gives
\(\Lambda_U^-(s_0-2it)=\overline{\Lambda_U^+(s_0)}\).  Hence at least one
of \(G_+(s_0),G_-(s_0)\) has modulus at least
\(|\Lambda_U^+(s_0)|/\sqrt2\).  Multiplication by the two factors in
\eqref{eq:Hpm}, followed by \eqref{eq:center-dominance}, proves
\eqref{eq:Jensen-centre}.
\end{proof}

\section{The positive-semidefinite calculation}
\label{app:psd}
This appendix gives the finite-basis reduction and the rigorous certificates
used in Theorem~\ref{thm:three-square}.  It is self-contained: the coefficient
matrix, the closed-form tensor \eqref{eq:tensor-reduction}, and the Bernstein
and contraction data below determine the constant.

\subsection{A one-dimensional tensor formula}
For $A=a_1+a_2+a_3+a_4$, put
\[
 K_a=\frac{\Gamma(a+1)}{\Gamma(a+1/2)},\quad
 F_{a\mid b}(t)={}_2F_1\!\left(\tfrac12-a,\tfrac14;b+\tfrac34;t\right),\quad
 H_A(t)={}_2F_1\!\left(A,\tfrac14;A+\tfrac34;t\right).
\]

\begin{proposition}\label{prop:tensor-reduction}
The tensor in \eqref{eq:psd-T} equals
\begin{equation}\label{eq:tensor-reduction}
 T(a_1,a_2,a_3,a_4)=\frac{\prod_{r=1}^4 K_{a_r}}{\Gamma(1/4)^4}
 B\!\left(A+\tfrac12,\tfrac14\right)(I_1+I_2),
\end{equation}
where
\[
 I_1=B\!\left(\tfrac14,a_3+\tfrac12\right)B\!\left(\tfrac14,a_2+\tfrac12\right)
 \int_0^1 t^{a_2+a_3-1/2}(1-t)^{-1/2}F_{a_1\mid a_3}(t)F_{a_4\mid a_2}(t)H_A(t)\,dt,
\]
\[
 I_2=B\!\left(\tfrac14,a_1+\tfrac12\right)B\!\left(\tfrac14,a_4+\tfrac12\right)
 \int_0^1 t^{a_1+a_4-1/2}(1-t)^{-1/2}F_{a_3\mid a_1}(t)F_{a_2\mid a_4}(t)H_A(t)\,dt.
\]
\end{proposition}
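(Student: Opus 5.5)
The plan is to repeat the one-dimensional reduction \eqref{eq:Careduction} with four independent exponents in place of the single exponent $a$. The prefactor $\prod_r K_{a_r}/\Gamma(1/4)^4$ in \eqref{eq:psd-T} is carried along unchanged, so it suffices to show
\[
 \int_\Omega\prod_{e\in E}\tau_e^{-3/4}\prod_{r=1}^4c_r^{a_r-1/2}\,d\tau
 =B\!\left(A+\tfrac12,\tfrac14\right)(I_1+I_2).
\]
Every integrand below is non-negative, so Tonelli's theorem justifies each change of order and each substitution, first with values in $[0,\infty]$. Finiteness then follows from the finiteness of the final one-dimensional integrals. Near $t=1$ those integrals have only a $(1-t)^{-1/2}$ singularity, possibly multiplied by a logarithm from the zero-balanced factor $H_A$; near $t=0$ the power $t^{a_i+a_j-1/2}$ is integrable because $a_i>0$.

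\emph{Step 1: reduce to two variables.} As in Section~\ref{sec:residue}, put $y=1-\tau_{14}$ and $z=1-\tau_{23}$. Then $c_1=y-\tau_{13}$, $c_3=z-\tau_{13}$, $c_2=z-\tau_{24}$ and $c_4=y-\tau_{24}$. For fixed $(y,z)$, the variables $\tau_{13}$ and $\tau_{24}$ therefore separate:
\[
 \int_0^{m}t^{-3/4}(y-t)^{a_1-1/2}(z-t)^{a_3-1/2}dt
 \quad\text{and}\quad
 \int_0^{m}t^{-3/4}(z-t)^{a_2-1/2}(y-t)^{a_4-1/2}dt,
\]
with $m=\min(y,z)$. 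What remains is the weight $(1-y)^{-3/4}(1-z)^{-3/4}$. The exponents are no longer symmetric in $y,z$, so we do not symmetrise. Instead we split into the two regions $z<y$ and $y<z$; these will produce $I_1$ and $I_2$ respectively.

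\emph{Step 2: Euler integrals on $z<y$.} Substitute $t=zs$ and apply Euler's integral with $b=1/4$ and $c=a_3+3/4$. The first inner integral becomes
\[
 z^{a_3-1/4}y^{a_1-1/2}B\!\left(\tfrac14,a_3+\tfrac12\right)F_{a_1\mid a_3}(z/y),
\]
and the second becomes
\[
 z^{a_2-1/4}y^{a_4-1/2}B\!\left(\tfrac14,a_2+\tfrac12\right)F_{a_4\mid a_2}(z/y).
\]
Now set $z=yt$, so $dz=y\,dt$. The powers collect to $y^{A-1/2}t^{a_2+a_3-1/2}$, and the remaining $y$-integral is
\[
 \int_0^1y^{A-1/2}(1-y)^{-3/4}(1-yt)^{-3/4}dy
 =B\!\left(A+\tfrac12,\tfrac14\right){}_2F_1\!\left(\tfrac34,A+\tfrac12;A+\tfrac34;t\right).
\]
Euler's transformation applies because $c-a-b=-\tfrac12$, and it gives
\[
 {}_2F_1\!\left(\tfrac34,A+\tfrac12;A+\tfrac34;t\right)=(1-t)^{-1/2}H_A(t).
\]
This yields exactly the term $B(A+\tfrac12,\tfrac14)\,I_1$. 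On the region $y<z$ the same computation applies with the roles $(y,a_1,a_4)\leftrightarrow(z,a_3,a_2)$ interchanged, and it produces $I_2$.

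\emph{Main obstacle.} I expect the main difficulty to be bookkeeping rather than analysis. One must track which deficit attaches to $y$ and which to $z$ on each side of the diagonal, so that the labels come out as $F_{a_1\mid a_3}F_{a_4\mid a_2}$ in $I_1$ and $F_{a_3\mid a_1}F_{a_2\mid a_4}$ in $I_2$. I will check this by specialising to $a_r\equiv a$, where $I_1=I_2$ and the formula must reduce to \eqref{eq:Careduction}.
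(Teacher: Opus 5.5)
Your proposal is correct and follows the paper's route. You integrate the opposite edge variables $\tau_{13},\tau_{24}$ by Euler's integral, split at $y=z$, and rescale $z=yt$ (resp.\ $y=zt$); Euler's transformation then turns ${}_2F_1(\tfrac34,A+\tfrac12;A+\tfrac34;t)$ into $(1-t)^{-1/2}H_A(t)$, and the labels come out exactly as $I_1$ on $z<y$ and $I_2$ on $y<z$.

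One small correction to your convergence remark: $H_A$ is not zero-balanced, since its $c-a-b$ equals $\tfrac12$, and each $F_{a\mid b}$ has $c-a-b=a+b>0$. So every hypergeometric factor is bounded at $t=1$ and no logarithm appears, which only strengthens your Tonelli-based finiteness argument.
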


\begin{proof}
Integrate two opposite edge variables in $\Omega$ by Euler's beta integral, then
split according to the ordering of the two residual variables.  Scaling the
smaller residual variable by the larger produces the two functions
$F_{a\mid b}$; the radial integral is a beta integral, and the final angular
integral gives $H_A$.  The two orderings yield $I_1$ and $I_2$.  Absolute
convergence follows because every exponent in \eqref{eq:psd-T} exceeds $-1$.
\end{proof}

\subsection{The rank-three profile}
Take
\[
 D=(1,2,4,8,16,32,64,128,256),\qquad a_i=D_i/50\quad(0\le i\le8),
\]
and write the matrix as the \emph{horizontal concatenation}
\[
 L_3=(B_1\mid B_2\mid B_3)\in\mathbb R^{3\times9},
\]
where
\[
\resizebox{0.99\textwidth}{!}{$
B_1=\begin{pmatrix}
 1.715486756648026&-2.339134822514005&-1.111200248286402\\
-2.139923409444840& 5.054096020385597&-4.594675801135149\\
 0.013952449847396& 0.030032118739681&-0.079199051930471
\end{pmatrix},\quad
B_2=\begin{pmatrix}
 4.065763859307927&-4.609159968314716& 3.792345042202834\\
 2.331093987386471&-0.296257120291222&-0.133606298590404\\
-0.113514772779195&-0.314330575910004&-0.164466369735625
\end{pmatrix},\quad
B_3=\begin{pmatrix}
-1.496971267659852&-0.288090695413796& 0.354925827320853\\
 0.154467402859762& 0.184476891646641&-0.113337159325580\\
 0.456868764340113& 0.039242123557055&-0.038995115885160
\end{pmatrix}.$}
\]
Thus the first row of $L_3$ is the first row of $B_1$, followed by the first
rows of $B_2$ and $B_3$; the blocks are not separately vectorised.  Every
displayed decimal is interpreted as an exact rational.  Put
\[
 u(x)=(x,x^2,x^4,x^8,x^{16},x^{32},x^{64},x^{128},x^{256})^T,
\]
\[
 S_3=\|L_3\mathbf1_9\|_2^2,
 \qquad q_3(v)=S_3^{-1/2}L_3u(v^{1/50}),
 \qquad M_3=\frac{L_3^TL_3}{S_3}.
\]
The row-sum vector and normalisation are
\[
 L_3\mathbf1_9=(0.083964483290869,\ 0.446334513491276,\ -0.170410429756210)^T,
\]
\[
 S_3=0.235304246957492839754620447437,
 \qquad \mathbf1_9^TM_3\mathbf1_9=1.
\]
In particular $\|q_3(1)\|_2=1$ exactly.  If a $9\times3$ factor is desired, it
is $L_3^T$; it is not obtained by flattening the three blocks.

\begin{proposition}\label{prop:admissibility}
The profile $q_3$ satisfies $\|q_3(v)\|_2\le1$ for $0\le v\le1$.
\end{proposition}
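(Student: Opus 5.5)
We must show $\|q_3(v)\|_2^2\le1$ on $[0,1]$. Substitute $x=v^{1/50}$; as $v$ runs over $[0,1]$, $x$ does too, so the claim is equivalent to
\[
 P(x):=S_3-\|L_3u(x)\|_2^2=S_3-u(x)^TL_3^TL_3u(x)\ge0\qquad(0\le x\le1).
\]
Here $P$ is a polynomial with exact rational coefficients, of degree $512$, with $P(0)=S_3>0$. The normalisation $\mathbf 1_9^TM_3\mathbf1_9=1$ gives $P(1)=0$ exactly. So the constraint is active at the endpoint $v=1$, which is forced by $\|q_3(1)\|=1$. The certificate is the ``Bernstein admissibility certificate'' mentioned in Theorem~\ref{thm:three-square}, and I would construct it as follows.

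First, remove the forced zero. Since $P(1)=0$, write $P(x)=(1-x)R(x)$ with $R$ exactly rational, of degree $511$, computed by synthetic division. It then suffices to show $R\ge0$ on $[0,1]$. I expect $R(1)=-P'(1)>0$, i.e.\ that the profile strictly decreases in norm as $v$ leaves $1$. This is a single rational check: $P'(1)=-2\,(L_3\mathbf1_9)^TL_3u'(1)$ with $u'(1)=(1,2,4,\dots,256)^T$.

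Second, expand $R$ in the Bernstein basis of degree $n\ge511$ on $[0,1]$, say $R=\sum_k\beta_k\binom nk x^k(1-x)^{n-k}$. If every $\beta_k\ge0$, nonnegativity follows immediately. All of this is exact rational arithmetic: the monomial-to-Bernstein conversion is a fixed triangular linear map. Its outputs can be verified by Arb balls or computed exactly.

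The main obstacle is that the Bernstein coefficients at the native degree may not all be nonnegative. The very high powers $x^{256}$ make $R$ vary sharply near $x=1$, and $R$ may come close to zero in the interior where the optimiser pushed the constraint nearly tight. I would handle this in one of two ways. One is degree elevation, whose coefficients converge to values of $R$ and so become positive once $R>0$ strictly. The other, which I expect to be necessary, is de Casteljau subdivision of $[0,1]$ into finitely many subintervals, with each piece's Bernstein coefficients checked separately. Subdivision should concentrate near $x=1$, where the scale is about $1/256$ and, in $x$, is further compressed by the $1/50$ power.

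If some interior point is an exact double root (a tangential touch), positivity of coefficients cannot close the argument. In that case I would perturb slightly: shrink $L_3$ by a factor $1-10^{-12}$ and renormalise. Because of the gap between $0.6567752\ldots$ and the stated $0.0700162$, this shrinkage is harmless to Theorem~\ref{thm:three-square}. Once every subinterval certificate holds, $R\ge0$, hence $P\ge0$, which is the proposition.
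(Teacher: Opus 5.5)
Your route is the paper's: it forms $P_3(x)=S_3-\|L_3u(x)\|_2^2$ and checks that the degree-$512$ Bernstein coefficients are non-negative, with only the endpoint coefficient $b_{512}=P_3(1)=0$ vanishing and $\min_{k\le511}b_k/S_3\approx0.034$. Dividing out $(1-x)$ first is an equivalent certificate, since the degree-$511$ coefficients of $R$ are $512\,b_k/(512-k)$, so the native-degree check already succeeds and no elevation or subdivision is needed. (Your perturbation fallback would not have worked in any case: scaling $L_3$ cancels in $q_3=S_3^{-1/2}L_3u$, and changing the profile would prove a statement about a different $q_3$.)
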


\begin{proof}
Set $P_3(x)=S_3-\|L_3u(x)\|_2^2$.  This is an exact rational polynomial of
degree $512$, with $P_3(1)=0$.  In its degree-$512$ Bernstein expansion all
coefficients are non-negative; the unique zero is the endpoint coefficient
$b_{512}$, and
\[
 \min_{0\le k\le511}\frac{b_k}{S_3}
 =0.0342355123365906602074626397574\ldots.
\]
Bernstein positivity gives the claim.
\end{proof}

\begin{proposition}\label{prop:contraction}
The interval enclosure of \eqref{eq:psd-CM} for $L_3$ satisfies
\[
 C[q_3]\in0.6567752140190419405677628751089899133\pm1.0\times10^{-17},
\]
and therefore
\[
 \frac1{8eC[q_3]}\in0.0700162386839053965024375047958122649\pm1.1\times10^{-18}.
\]
\end{proposition}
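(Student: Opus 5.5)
The enclosure will be obtained as a rigorous finite computation that combines three ingredients: the quadratic formula \eqref{eq:psd-CM}, the one-dimensional tensor reduction of Proposition~\ref{prop:tensor-reduction}, and ball arithmetic in the style of the quarter-power computation \eqref{eq:Cquarter}.

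First I use $M_3=L_3^TL_3/S_3$ with exact rational entries. Since $q_3(v)=S_3^{-1/2}L_3u(v^{1/50})$, each component of $q_3$ is a combination of the powers $v^{a_i}$ with $a_i=D_i/50$. Theorem~\ref{thm:vector-transfer} together with \eqref{eq:psd-CM} then gives
\[
C[q_3]=\sum_{i,j,k,l=0}^{8}(M_3)_{ij}(M_3)_{kl}\,T(a_i,a_j,a_k,a_l).
\]
The symmetries of $\Omega$ reduce the number of distinct tensor values needed well below $9^4$. These symmetries are the swap $1\leftrightarrow2$ together with $3\leftrightarrow4$, the swap $(1,2)\leftrightarrow(3,4)$, and the relabelings that preserve the bipartite edge structure.

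The main step is to enclose each $T(a_1,\dots,a_4)$ rigorously through \eqref{eq:tensor-reduction}. The prefactors $K_a$ and the beta values are straightforward Arb evaluations. The remaining task is to enclose the integrals $I_1,I_2$, whose integrands have the form $t^{\beta}(1-t)^{-1/2}\,F\,F'\,H_A$.

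For these integrals I follow the scheme used for $\mathcal C_{1/4}$. I expand the product of the three ${}_2F_1$ factors as a power series $\sum_n q_nt^n$, generating the coefficients exactly or in balls. I then integrate term by term against $t^{\beta}(1-t)^{-1/2}$, which gives $\sum_n q_nB(n+\beta+1,\tfrac12)$.

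The difficulty, and the step I expect to be the main obstacle, is the tail. When $a>1/2$ the factor $F_{a\mid b}$ has the negative parameter $\tfrac12-a$, so the coefficients $q_n$ need not be nonnegative. The monotone tail argument used for \eqref{eq:Cquarter} then fails. In addition, $H_A$ has a $(1-t)^{-1/2}$ type singularity, because its parameter excess is $\tfrac34-\tfrac14=\tfrac12$. Together with the explicit $(1-t)^{-1/2}$, this produces a logarithmic endpoint behaviour, so convergence is slow.

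To handle this, I would first apply Euler's transformation to $H_A$, pulling out $(1-t)^{-1/2}$ so that its residual factor ${}_2F_1(3/4,A+1/2;A+3/4;t)$ is tamer. I would then split the integral at $t=1/2$. On $[0,1/2]$ I would use truncated power series with explicit geometric tail bounds. On $[1/2,1]$ I would use the $1-t$ connection formulas for ${}_2F_1$. These formulas include the logarithmic case, which arises because each factor is zero-balanced or nearly so. This produces convergent expansions in $1-t$ with explicit remainders.

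Alternatively, Arb's rigorous numerical integration applied to the transformed integrand, after the substitution $t=1-w^2$ that removes the square-root singularity, would give certified enclosures directly. This is what I would try first.

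Summing the enclosed tensor entries against the exact rational $M_3$ yields a ball for $C[q_3]$. At 256-bit working precision its radius should be pushed below $10^{-17}$.

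The second enclosure then follows by monotone interval evaluation of $1/(8eC)$. Its radius is approximately $\tfrac1{8e}C^{-2}\cdot10^{-17}\approx1.07\times10^{-18}$, which is consistent with the stated $1.1\times10^{-18}$.
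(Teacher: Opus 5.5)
Your route is the paper's: reduce to the $1035$ orbits of $T(a_i,a_j,a_k,a_l)$, evaluate each through \eqref{eq:tensor-reduction} by validated integration, contract against the exact rational $M_3$, and propagate to $1/(8eC)$. Your radius $\approx1.07\times10^{-18}$ for the second enclosure is right. The paper's proof adds only a $40$-digit contraction, landing $7.86\times10^{-19}$ from the interval centre, as a consistency check.

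Your endpoint analysis at $t=1$ is wrong, however, and the Euler step you propose runs backwards.
\begin{itemize}
\item $H_A={}_2F_1(A,\tfrac14;A+\tfrac34;t)$ has parameter excess $+\tfrac12$. It is therefore bounded on $[0,1]$, with a $(1-t)^{1/2}$ correction term rather than a $(1-t)^{-1/2}$ singularity.
\item $H_A$ is already the Euler-transformed factor. The explicit $(1-t)^{-1/2}$ in $I_1,I_2$ is exactly what was extracted from the radial factor ${}_2F_1(\tfrac34,A+\tfrac12;A+\tfrac34;t)=(1-t)^{-1/2}H_A(t)$, as in the passage from \eqref{eq:Careduction} to \eqref{eq:Cquarter}.
\item Transforming $H_A$ again gives $H_A=(1-t)^{1/2}\,{}_2F_1(\tfrac34,A+\tfrac12;A+\tfrac34;t)$. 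So your ``tamer'' residual factor is the singular one, and your stated identity is off by a factor $1-t$.
\item Had there really been two factors $(1-t)^{-1/2}$, the integral would diverge. That contradicts the absolute convergence in Proposition~\ref{prop:tensor-reduction}.
\end{itemize}
After $t=1-w^2$ the integrand is already bounded near $w=0$, so drop the Euler step.

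The real difficulties lie elsewhere.
\begin{itemize}
\item $F_{a\mid b}$ has excess $a+b$, which is as small as $0.04$ here. This gives a bounded but non-analytic $(1-t)^{a+b}$ term, Taylor coefficients decaying only like $n^{-1-(a+b)}$, and termwise Beta-series tails of order $N^{-1/2-(a+b)}$. This, more than the sign changes, is why the $N=700$ scheme behind \eqref{eq:Cquarter} cannot reach $10^{-17}$.
\item The excesses $(D_i+D_j)/50$ are never integers, so the logarithmic connection formulas never occur. Only ill-conditioned non-logarithmic ones arise, and ball arithmetic absorbs those.
\item At $t=0$ the factor $t^{a_2+a_3-1/2}$ has exponent as low as $-0.46$. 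The substitution $t=1-w^2$ does not touch it, and a rigorous integrator cannot bound an unbounded endpoint piece directly. It must be handled separately, for instance by your incomplete-beta series on $[0,\tfrac12]$ or by a power substitution.
\end{itemize}
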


\begin{proof}
For the nine exponent directions, the symmetries $i\leftrightarrow j$,
$k\leftrightarrow l$, and $(i,j)\leftrightarrow(k,l)$ leave $1035$ tensor
orbits.  Contracting the exact rational matrix $M_3$ against the tensor
\eqref{eq:tensor-reduction}, evaluated to $40$ digits at these orbits, gives
\[
 C[q_3]=0.65677521401904194135363065873002498083982947\ldots,
\]
which differs from the interval centre by $7.86\times10^{-19}$.  The
formal enclosure is the output of the validated interval integration; the
arithmetic check verifies the matrix orientation, exact normalisation,
Bernstein certificate and contraction centre.
\end{proof}

\subsection{The stronger rank-six benchmark}
A ten-direction, six-square profile gives the certified enclosure
\[
 C[q_6]\in0.6566338678379319741683641732\pm5.63\times10^{-18},
\]
and hence $\dfrac1{8eC[q_6]}>0.0700313$.  The ten-direction tensor has $1540$
symmetry orbits; together with the $1035$ rank-three orbits this explains the
combined total $2575$, which should not be attributed to the rank-three
contraction alone.  We use the rank-three profile in the main theorem because
it is substantially simpler and already crosses the seven-per-cent threshold.

\subsection{Reproducibility}
The constant is reproducible from the data printed above, with no external
input.  Reading the matrix $L_3$ as an exact rational, one forms
$P_3(x)=S_3-\|L_3u(x)\|^2$ and expands it in the degree-$512$ Bernstein basis to
confirm admissibility; one evaluates the tensor \eqref{eq:tensor-reduction} at
the nine directions $a_i=D_i/50$ and contracts $M_3=L_3^{\mathsf T}L_3/S_3$
against it to recover $C[q_3]$.  Both steps use only the closed forms of this
appendix.  The $\pm10^{-17}$ interval enclosure of
Proposition~\ref{prop:contraction} is the output of a validated interval
integration of \eqref{eq:tensor-reduction}; its complete inputs, logs and
software versions may be deposited separately for a permanent record.

\printauthoraddress

\end{document}